\documentclass{article}
\usepackage[a4paper, total={6.2in, 9.6in}]{geometry}
\usepackage{tikz}
\usepackage[utf8]{inputenc}
\usepackage{amsmath,amsfonts,amsthm,amssymb, amsopn,mathtools}
\usepackage[final, pdftex, pdfpagelabels, pdfstartview = {FitH}, bookmarks, colorlinks, plainpages = false, linktoc=all, linkcolor=red, citecolor=blue, urlcolor=blue,filecolor=black]{hyperref}
\usepackage{xcolor}
\usepackage[textwidth=20mm]{todonotes}
\usepackage{cleveref}
\usepackage{pdfrender, tikz-cd, rotating}
\usepackage{fancyhdr}
\usepackage{enumitem}
\usepackage{mathtools}
\usepackage{titlesec}

\titleformat{\subsection}[runin]
  {\normalfont\bfseries}{\thesubsection.}{0em}{ }

\usepackage{etoolbox, array, setspace}
\usepackage{url}
\newcounter{rowcntr}[table]
\renewcommand{\therowcntr}{\arabic{rowcntr}}
\newcolumntype{N}{>{\refstepcounter{rowcntr}\therowcntr}c}
\AtBeginEnvironment{table}{\setcounter{rowcntr}{0}}

\numberwithin{equation}{section}

\newcommand*{\isoarrow}[1]{\arrow[#1,"\rotatebox{90}{\(\sim\)}"]}
\tikzcdset{ cells={font=\everymath\expandafter{\the\everymath\displaystyle}}, }

\theoremstyle{plain}
\newtheorem{theorem}{Theorem}[section]
\newtheorem{lemma}[theorem]{Lemma}
\newtheorem{prop}[theorem]{Proposition}
\newtheorem{cor}[theorem]{Corollary}

\newtheorem{conj}[theorem]{Conjecture}

\theoremstyle{definition}
\newtheorem{definition}[theorem]{Definition}
\newtheorem{ex}[theorem]{Example}
\newtheorem{remark}[theorem]{Remark}

\theoremstyle{remark}

\newcommand{\Nhat}{\hat{\mathbf{N}}}
\newcommand{\bbZ}{\mathbb{Z}}
\newcommand{\bbQ}{\mathbb{Q}}
\newcommand{\bbN}{\mathbb{N}}

\newcommand{\bbR}{\mathbb{R}}

\newcommand{\bfN}{\mathbf{N}}
\newcommand{\bfM}{\mathbf{M}}
\newcommand{\bfH}{\mathbf{H}}
\newcommand{\frX}{\mathfrak{X}}
\newcommand{\undH}{\underline{\bfH}}
\newcommand{\calH}{\mathcal{H}}
\newcommand{\ra}{\rightarrow}
\newcommand{\Htil}{\widetilde{\calH}}
\newcommand{\Pgeq}[1]{\Phi^{\geq #1}}
\newcommand{\mcL}{\mathcal{L}}
\newcommand{\bfMt}{\bfM^{\Pgeq{3}}}

\usepackage{}\newcommand{\HTIL}{\widetilde{\underline{\bfH}}}

\DeclareMathOperator{\kpf}{kpf}
\DeclareMathOperator{\hgt}{ht}

\title{Pre-canonical bases on affine Hecke algebras}
\author{Nicolas Libedinsky, Leonardo Patimo, David Plaza }
\date{}

\begin{document}
\maketitle
\begin{abstract}
For any affine Weyl group, we introduce the pre-canonical bases. They are a set of bases $\{\bfN^i\}_{1\leq i\leq m+1}$ (where $m$ is the height of the highest root) of the spherical Hecke algebra  that interpolates between the standard basis $\bfN^1$ and the canonical basis $\bfN^{m+1}$. The expansion of  $\bfN^{i+1}$ in terms of the $\bfN^i$ is in many cases very simple and we conjecture that in type $A$ it is positive. 
\end{abstract}


\section{Introduction}

This paper introduces the notion of pre-canonical bases on the spherical Hecke algebra. 
  The motivation comes from the study of Kazhdan-Lusztig polynomials in representation theory, and especially from  Sch\"{u}tzer's work on character formulas for Lie groups. From a computational point of view, the most interesting feature of the definition is that these bases interpolate between the standard and the canonical bases, thus dividing the hard problem of calculating Kazhdan-Lusztig polynomials (or $q$-analogues of weight multiplicities) into a finite number of much easier problems.

\subsection{Overlook.} \label{a3} Before going any further let us roughly describe a simple example of the pre-canonical bases on a spherical Hecke algebra. Let $\Phi$ be a root system of type $A_3$ with simple roots $\{\alpha_1, \alpha_2, \alpha_3\}$. Let  $X^+$ be the set of dominant weights and let $\Htil$ be the spherical Hecke algebra with scalars $\mathbb{Z}[q^{\frac{1}{2}}, q^{-\frac{1}{2}}]$. The definition of $\Htil$ will come later; for now  we just need to know that  $\Htil$ has a standard basis $\{\bfH_{\lambda} : \lambda\in X^+\}$ and a canonical (or Kazhdan-Lusztig) basis $\{\underline{\bfH}_{\lambda} : \lambda\in X^+\}$.  In this case there are four pre-canonical bases, namely \[\{\bfN_{\lambda}^{i} : \lambda\in X^+\}\ \ \ \mathrm{for}\  1\leq i\leq 4,\]
with $ \bfN_{\lambda}^4$ being the canonical basis $\underline{\bfH}_{\lambda }$ and $ \bfN_{\lambda}^1$ being the standard basis ${\bfH}_{\lambda }. $
Let  $\lambda = a\varpi_1 + b\varpi_2 + c\varpi_3 \in X^+$ where $\{\varpi_1, \varpi_2, \varpi_3\}$ are the fundamental weights. The  three ``simple'' decompositions mentioned before come in very different flavors.

We use the notation $\alpha_{13}:=\alpha_1+\alpha_2+\alpha_3$, $\alpha_{12}:=\alpha_1+\alpha_2$ and $\alpha_{23}:=\alpha_2+\alpha_3$.
The first decomposition is rather simple.
\begin{equation} \label{eq n4 en n4 intro}
    \bfN_{\lambda}^4 = \displaystyle\sum_{k=0}^{\min(a, c)} q^k \bfN_{\lambda -k\alpha_{13}}^3.
\end{equation}
 This type of decomposition is a special case of a general phenomenon in type $A_n$: in fact, a similar behavior occurs in the decomposition of $\bfN^{i+1}$ in terms of $\bfN^{i}$ when $i\geq n/2+1$ (see \Cref{nhalf}).

For the second decomposition we need to introduce the set $I_{\lambda}$.
 It is the set of $\mu\in X^+$ such that there exist $n, m, l\in \mathbb{N}$  with  \[\mu=\lambda-n\alpha_{12}-m\alpha_{23} \ \ \ (\mathrm{in\ this\ case\ we\ consider\ }l\ \mathrm{to\ be\ }0)\] or \[\lambda-n\alpha_{12}-m\alpha_{23}\in \mathbb{N}\varpi_1+\mathbb{N}\varpi_3\ \mathrm{ and}\  \mu=\lambda-n\alpha_{12}-m\alpha_{23}-l\alpha_{13}\in X^+.\]   
For $\mu\in I_{\lambda}$ we define $d(\mu):=n+m+2l$. 
\begin{equation}\label{eq intro N3 in N2}
   \bfN_{\lambda}^3 = \displaystyle\sum_{\mu \in I_{\lambda}} q^{d(\mu)} \bfN_{\mu}^2.   
\end{equation}


The last decomposition is given by the formula 
\begin{equation}\label{1}
    \bfN_{\lambda}^2 = \displaystyle \sum_{\substack{\mu \in X^+ \\ \mu \leq \lambda \ \  } }  q^{\hgt(\lambda-\mu)} \bfN_{\mu}^1,
\end{equation}
where $\leq $ is the usual order on weights (i.e. $\mu\leq \lambda$ means that $\lambda-\mu\in \mathbb{N}\alpha_1+\mathbb{N}\alpha_2+\mathbb{N}\alpha_3)$ and $\hgt$ denotes the height of a weight.

The basis $\bfN_\lambda^2$ would be the canonical basis ``if all Kazhdan-Lusztig polynomials were trivial." Geometrically, $\bfN_\lambda^2$ is the character of the constant sheaf on the corresponding Schubert variety.
\Cref{1} will remain valid for any root system. 

\subsection{Pre-canonical bases.} Let us fix some notation before we can introduce the pre-canonical bases. 
Let $\Phi$ be a root system and $X\supset \Phi$ be a corresponding weight lattice. We fix a system of positive roots $\Phi^+\subset \Phi$ and let $\Delta$ be the corresponding set of simple roots. Let $\rho$ be the half-sum of the positive roots.  
 For an integer $ i\geq 1$ define $\Pgeq{i}$ to be the set of positive roots with height at least $i$, or in formulas
\[\Pgeq{i}:=\{ \alpha \in \Phi^+ : \mathrm{ht}(\alpha)\geq i   \}.\]

Let $W_f$ be the finite Weyl group attached to $\Phi$. We say that a weight $\lambda\in X$ is \emph{regular} if there is no reflection $s\in W_f$ which fixes $\lambda+\rho$.   For $\lambda\in X$ regular we define $w_\lambda\in W_f$ to be the unique element such that $w_\lambda \cdot \lambda $ is dominant (here $\cdot$ stands for the dot action, defined as $w\cdot \lambda=w(\lambda+\rho)-\rho$).

Let 
 $W_a=W_f\ltimes \bbZ \Phi$ be the corresponding affine Weyl group. 
Let $\calH$ be the Hecke algebra of $W_a$ over $\bbZ[q^{\frac12},q^{-\frac12}]$  with standard basis $\{\bfH_x\}$ and Kazhdan-Lusztig basis $\{\undH_x\}$.
For $\lambda \in \bbZ\Phi\subset X$, we think of the translation $t_\lambda$ as an element of $W_a$. 
 If $\lambda \in \bbZ\Phi\cap X^+$, let $\theta(\lambda)= t_\lambda w_0$ and $\undH_\lambda:=\undH_{\theta(\lambda)}\in \calH$.
 Then, for $\lambda \in \bbZ\Phi$, we  define
	\begin{equation*}
	\widetilde{\undH}_\lambda	=  \left\{
	\begin{array}{rl}
	(-1)^{\ell(w_{\lambda})} \undH_{w_{\lambda}\cdot \lambda},	& \mbox{if } \lambda \mbox{ is regular;}   \\
	0,& \mbox{if } \lambda \mbox{ is singular.}
	\end{array} 
	\right.
	\end{equation*}
(See \Cref{dw} for how to extend the definition of $\HTIL_\lambda$ to any $\lambda\in X$.)

\begin{definition}[Pre-canonical bases]
For  $i\geq 2$ and $\lambda\in X^+$ define
\begin{equation}\label{pc} 
 	\bfN_{\lambda}^{ i} :=\sum_{I\subset \Pgeq{i} } (-q)^{|I|}\widetilde{\undH}_{\lambda - \sum_{\alpha\in I}\alpha}.
 \end{equation}

\end{definition}

For $i=1$ the definition is almost the same as \cref{pc}, only that one has to normalize by some scalar (for details see Definition \ref{First}).
For any fixed $i\geq 1$, the set $\bfN^i:=\{\bfN_{\lambda}^{i} : \lambda\in  X^+\}$ is called the $i^{\text{th}}$ \emph{pre-canonical basis}. It is a basis of the \emph{spherical Hecke algebra} $\Htil $,  the decategorification of any of the two categories appearing in the Geometric Satake equivalence (see \cref{omega} for an easy combinatorial definition of $\Htil$ and \cref{standard}  for the definition of its standard basis  $\{\bfH_{\lambda}\}$). 
 The second part of the following theorem is a $q$-deformation of the main  result of Sch\"{u}tzer's paper  \cite{Sch} and it is at the origin of the definition of the pre-canonical bases. 

\begin{theorem}\label{12} For $\lambda\in X^+$ we have the following equations
\begin{equation*}
    \bfN_{\lambda}^{1} =\bfH_{\lambda}  \qquad \mbox{and} \qquad
    \bfN_{\lambda}^2   = \displaystyle \sum_{\substack{\mu \in X^+ \\ \mu \leq \lambda } }  q^{\hgt(\lambda-\mu)} \bfN_{\mu}^1.
\end{equation*}
\end{theorem}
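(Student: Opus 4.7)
For the first identity, I would simply unwind Definition~\ref{First}: the normalization scalar built into the $i=1$ case of \eqref{pc} is set up precisely so that the right-hand side collapses to $\bfH_\lambda$, and there is nothing further to check.

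For the second identity, my plan is to start from the defining expression
\[
\bfN_\lambda^2 = \sum_{I\subset \Pgeq{2}}(-q)^{|I|}\,\widetilde{\undH}_{\lambda-\sum_{\alpha\in I}\alpha}
\]
and expand each $\widetilde{\undH}_\nu$ in the standard basis $\{\bfH_\mu\}$. Terms with singular $\nu$ vanish by definition, while for regular $\nu$ the identity $\widetilde{\undH}_\nu=(-1)^{\ell(w_\nu)}\undH_{w_\nu\cdot\nu}$ reduces the task to expanding a dominant $\undH_{w_\nu\cdot\nu}$ via the spherical Kazhdan--Lusztig polynomials. After collecting the resulting contributions, the target is to show that for each $\mu\in X^+$ with $\mu\leq\lambda$ the coefficient of $\bfH_\mu$ is exactly $q^{\hgt(\lambda-\mu)}$.

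The core of the argument should be a sign-cancellation scheme, amounting to a $q$-deformation of Sch\"utzer's combinatorial identity from \cite{Sch}. Concretely, I would partition the subsets $I\subset \Pgeq{2}$ according to whether $\lambda-\sum_{\alpha\in I}\alpha$ lies in the dominant chamber, and on the non-dominant side construct a fixed-point-free involution using the dot action of a carefully chosen simple reflection of $W_f$, so that paired subsets contribute with opposite cardinality parity and their signed contributions cancel. The surviving terms, together with the non-trivial Kazhdan--Lusztig corrections from the expansion of $\undH_{w_\nu\cdot\nu}$, should collapse via a generating-function identity on the weight lattice into the desired height expression $q^{\hgt(\lambda-\mu)}$.

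The main obstacle is ensuring that the involution respects the height-$\geq 2$ restriction (a naive reflection move could land a height-$2$ root on a simple root, escaping $\Pgeq{2}$) and that it correctly absorbs the Kazhdan--Lusztig corrections rather than just the leading standard-basis term. An alternative cleaner route, which I would pursue in parallel, is to exploit the geometric description stated immediately before the theorem: identify $\bfN_\lambda^2$ directly with the class of the constant sheaf on the Schubert variety $X_\lambda$, and recover the height-weighted expansion from the standard cell decomposition, with the defining alternating sum then interpreted as the IC resolution expressing that constant sheaf in terms of simple perverse sheaves.
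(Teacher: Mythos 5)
The proposal has two genuine gaps, and the overall approach, where it is concrete, differs from the paper's.

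\textbf{On $\bfN_\lambda^1=\bfH_\lambda$.} You claim this follows by ``simply unwinding Definition~\ref{First}'' with ``nothing further to check.'' That is not so. The normalization by $\pi_{W^\lambda}(v^2)$ in Definition~\ref{First} only guarantees, via Lemma~\ref{lambda-I}(\ref{part2}), that the coefficient of $\undH_\lambda$ in $\bfN_\lambda^1$ equals $1$, hence unitriangularity and the basis property. It does \emph{not} immediately yield $\bfN_\lambda^1=\bfH_\lambda$; that is a separate theorem (Theorem~\ref{std=n1}), for which the paper gives a full proof via the auxiliary generating function $\Theta_1(\lambda)$, and which can alternatively be seen via the Satake isomorphism as the identification with Hall--Littlewood polynomials (Remark~\ref{satake2}). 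The equality is equivalent to a closed formula for all inverse spherical Kazhdan--Lusztig polynomials, and is not a tautology.

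\textbf{On the second identity.} You correctly locate the argument as a $q$-deformation of Sch\"utzer's identity and rightly notice that the na\"ive sign-cancellation route runs into trouble: the restriction to $\Pgeq{2}$ is not stable under the reflections one would want to use, and the Kazhdan--Lusztig corrections must also be absorbed. But having identified the obstruction, you do not overcome it; you only say you ``would pursue in parallel'' a geometric route, which the paper does not take (the sheaf-theoretic remark before the theorem is motivation, not a proof). The paper's actual mechanism is different from a direct involution on subsets $I$. It introduces the rational function
\[
\Theta(\lambda)=\sum_{w\in W_f}\frac{e^{w(\lambda)}\prod_{\alpha\in\Pgeq{2}}(1-qe^{-w(\alpha)})}{\prod_{\alpha\in\Phi^+_w}(1-qe^{-w(\alpha)})\prod_{\alpha\in\Phi^+_{-w}}(q-e^{-w(\alpha)})}
\]
in the completion $\frX$ and expands $\Theta(\lambda)|_{X^+}$ in two ways. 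The first expansion (\cref{exp1}) uses the $q$-Kostant partition function and the singular-weight cancellation lemma (where a reflection in the stabilizer does furnish a sign-reversing involution — but at the level of the Weyl-group sum, not the index sets $I$), and Kato's identification $h_{\mu,\lambda}(v)=K_{\lambda,\mu}(v^2)$; this produces exactly the right-hand side of \cref{antiatomiceq} written in the $\{\bfH_\mu\}$ basis. The second expansion (\cref{exp2}) uses Lemma~\ref{Xlam} to show only $w=\mathrm{id}$ survives when restricting to $P^\lambda$, yielding $\sum_{\mu\leq\lambda}q^{\hgt(\lambda-\mu)}e^\mu$. Comparing coefficients of $e^\mu$ gives $\bfN_\lambda^2=\bfN_\lambda$, and then Corollary~\ref{coro lowest decomposition} follows by combining this with Theorem~\ref{std=n1}. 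So the paper bypasses your involution-on-$I$ difficulty entirely by working with a carefully chosen $q$-deformed $\Theta(\lambda)$; you would need to either adopt that route or actually construct the involution and show it absorbs the KL corrections, neither of which your proposal does.
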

 The fact that $\bfN_{\lambda}^{m+1}=\underline{\bfH}_{\lambda}$ if $m$ is the height of the highest root follows directly the definition of the pre-canonical bases. We remark that equality $\bfN_\lambda^1=\bfH_\lambda$ gives a closed formula for all inverse (spherical) Kazhdan-Lusztig polynomials for all affine Weyl groups. 


\begin{remark}
The spherical Hecke algebra is isomorphic via the \emph{Satake transform} to the algebra of symmetric functions $\bbZ[v,v^{-1}][X]^{W_f}$ (cf. \Cref{geoSat}). Under this isomorphism, the Kazhdan-Lusztig basis $\{\undH_{\lambda}\}$ corresponds to the Weyl characters and the standard basis $\{\bfH_{\lambda}\}$ to the Hall-Littlewood polynomials (see e.g. \cite{Ste3}). It follows that, after applying Satake, the pre-canonical bases also yield new bases of the ring of symmetric functions which interpolate between Weyl characters and Hall-Littlewood polynomials.
\end{remark}

\subsection{Main Conjecture.} Suppose now that $\Phi$ is a root system of  type $A_n$. For an integer $1\leq i \leq n$, let $\Phi^{i}$ be the set of positive roots of height  $i$.
 For $\lambda , \mu \in X^+$, we write $\mu \leq_i \lambda$ if $\lambda - \mu$ can be written as a positive integral linear combination of elements of $\Phi^i$.  
 
\begin{theorem} \label{nhalf}
    Let $n/2+1 \leq  i \leq n $.  For  $\lambda \in X^+$ we have 
    \begin{equation*}
        \bfN_{\lambda}^{i+1} =  \displaystyle \sum_{\mu \leq_i \lambda } q^{\frac{1}{i} \hgt (\lambda - \mu)} \bfN_{\mu}^i.
    \end{equation*}
     
\end{theorem}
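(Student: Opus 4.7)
The plan is to expand both sides in the extended $\widetilde{\underline{\bfH}}$-basis and show they agree after a formal power-series manipulation. Let $T_\alpha$ denote the formal shift operator $T_\alpha \HTIL_\mu = \HTIL_{\mu-\alpha}$. Then the definition reads $\bfN_\lambda^i = \prod_{\alpha \in \Pgeq{i}}(1-qT_\alpha)\HTIL_\lambda$, and the decomposition $\Pgeq{i} = \Pgeq{i+1}\sqcup\Phi^i$ yields the formal identity $\bfN_\lambda^{i+1} = \prod_{\gamma \in \Phi^i}(1-qT_\gamma)^{-1}\bfN_\lambda^i$. Expanding each geometric series and using $\hgt(\gamma) = i$ for all $\gamma \in \Phi^i$ gives
\[
\bfN_\lambda^{i+1} = \sum_{(c_\gamma)_{\gamma \in \Phi^i}\geq 0} q^{\frac{1}{i}\hgt(\sum c_\gamma \gamma)}\,\bfN_{\lambda - \sum c_\gamma \gamma}^i,
\]
where $\bfN_\mu^i$ on the right is extended to arbitrary $\mu \in X$ via the same defining formula. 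The right-hand side of the theorem is this sum restricted to those $(c_\gamma)$ with $\mu := \lambda - \sum c_\gamma \gamma \in X^+$; thus the theorem amounts to the vanishing
\[
\sum_{(c_\gamma) \geq 0,\ \mu \notin X^+} q^{\sum c_\gamma}\,\bfN_\mu^i \;=\; 0. \qquad (\ast)
\]

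To prove $(\ast)$ I would expand each $\bfN_\mu^i$ in $\HTIL$ and exploit that $\HTIL_\nu = 0$ for $\nu$ singular under the $W_f$-dot action and that $\HTIL_{s\cdot\nu} = -\HTIL_\nu$ for simple reflections $s$. The simplest case $i=n$ is instructive: here $\Phi^i$ consists only of the highest root $\gamma_0$, so $(\ast)$ telescopes to a single boundary term of the form $q^{k^*}\HTIL_{\lambda - k^*\gamma_0}$ whose weight always lies on a reflection wall, making it singular and hence zero. For general $i\geq n/2+1$ I would identify analogously the ``boundary'' terms contributing to the sum over $(c_\gamma)$ and show they come in matched pairs whose signs cancel thanks to the antisymmetry of $\HTIL$, or are themselves singular.

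The hypothesis $i \geq n/2+1$ enters precisely at this combinatorial step. In type $A_n$ the roots of height $i$ are the intervals $\alpha_{j,j+i-1}$ for $1 \leq j \leq n-i+1$, so $|\Phi^i| \leq n/2$; moreover $2i > n$ exceeds the height of the highest root, so no two elements of $\Pgeq{i}$ sum to a root. These rigidity properties imply that the decomposition $\lambda - \nu = \sum b_\gamma \gamma$ in $\bbN\Phi^i$ is unique whenever it exists and that $\Pgeq{i}$ is small and spread-out enough for the pairing/vanishing argument in $(\ast)$ to go through uniformly. The main obstacle I anticipate is precisely setting up this involution (or direct cancellation): one must verify that it is sign-reversing with respect to $(-q)^{|K|}$, preserves the total $q$-weight $\sum c_\gamma + |K|$, and has no fixed points. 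The bound $i > n/2$ is exactly what guarantees the necessary freedom — for $i \leq n/2$, sums of height-$i$ roots can again be roots, the decomposition data no longer behaves rigidly, and the clean statement of \Cref{nhalf} breaks down.
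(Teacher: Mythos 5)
Your formal set-up is sound: the shift-operator rewriting $\bfN_\lambda^i = \prod_{\alpha\in \Pgeq{i}}(1-qT_\alpha)\HTIL_\lambda$ and the consequent identity $\bfN_\lambda^{i+1} = \prod_{\gamma\in\Phi^i}(1-qT_\gamma)^{-1}\bfN_\lambda^i$ are correct (at the level of coefficients of each $\undH_\nu$, these are well-defined formal power series), and the reduction of the theorem to the vanishing statement $(\ast)$ is valid. However, the proof of $(\ast)$ is the entire content of the theorem, and you only sketch it: ``identify the boundary terms and show they come in matched pairs whose signs cancel\ldots or are themselves singular'' is a plan, not an argument. Constructing a sign-reversing, weight-preserving, fixed-point-free involution on the bad tuples $(c_\gamma)$ is genuinely nontrivial because the region $\{(c_\gamma)\geq 0 : \lambda-\sum c_\gamma\gamma\notin X^+\}$ in $\bbZ_{\geq 0}^{n-i+1}$ has a complicated boundary, and each $\bfN_\mu^i$ is itself an alternating sum of $2^{|\Pgeq{i}|}$ terms $\HTIL_{\mu-\Sigma_I}$ that must be tracked.

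The paper avoids the multi-dimensional cancellation entirely and takes a structurally different route. It interpolates with a filtration $\Gamma_0 = \Phi^{>i}\subset\Gamma_1\subset\cdots\subset\Gamma_{\bar n} = \Pgeq{i}$ (where $\bar n = n-i+1$), obtained by adjoining the height-$i$ roots $\gamma_1,\ldots,\gamma_{\bar n}$ one at a time, and studies $\bfM_\lambda^j := \bfM_\lambda^{\Gamma_j}$. The heart is the stepwise formula of Lemmas \ref{lema descomposicion M facil} and \ref{lema descomposicion M dificil}: $\bfM_\lambda^{j-1} = \bfM_\lambda^j + q^{r-j+1}\bfM_{R_j(\lambda)}^j$, where $R_j(\lambda) = \lambda - \sum_{t=j}^r\gamma_t$ jumps just far enough to land back in $X^+$. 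This is proved by introducing a carefully constructed decreasing chain of auxiliary sets $B_k$ and repeatedly applying the symmetry $\bfM_\mu^A = -\bfM_{s_k\cdot\mu}^{s_k(A)}$ (Proposition \ref{lemma X equal to zero} --- the tool you correctly anticipate), and it converts the telescoping into a one-dimensional affair at each step. Iterating over $j$ and proving that every $\mu\leq_i\lambda$ is reached by a unique composition $R_{\bar n}^{a_{\bar n}}\cdots R_1^{a_1}(\lambda)$ completes the argument. Notice also that your stated justification for the bound $i > n/2$ is off: linear independence of $\Phi^i$ holds for every $i$ in type $A_n$ (a triangularity argument on the coefficient of $\alpha_1$, $\alpha_2$, \ldots), so uniqueness of the $\bbN\Phi^i$-decomposition of $\lambda-\mu$ is automatic. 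What $\bar n < i$ actually buys in the paper is the disjointness of the two coordinate ranges $\{j-1,\ldots,\bar n\}$ and $\{j+i-1,\ldots,n\}$ affected by $R_j$, which is precisely what makes the uniqueness of the $(a_1,\ldots,a_{\bar n})$-sequences go through. To close your own approach you would need an analogue of Lemmas \ref{lema descomposicion M facil}--\ref{lema descomposicion M dificil} in the direct-cancellation language, which is not supplied.
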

The formula in \Cref{nhalf} is incredibly simple, and although we hope that a formula for all $i$ will be found in the future (cf. \Cref{empirical}), we do not expect it to be as simple as that. In fact, we expect a different behavior for small $i$ much more in the vein of \cref{1}, as the following theorem illustrates.

\begin{theorem} \label{A3A4}
    The formulas for all the decompositions of $\bfN^{i+1}$ in terms of $\bfN^{i}$ in type $A_3$  are the ones explained in Section \ref{a3}. The corresponding formulas  in type $A_4$ are the ones  explained in Section \ref{a4}. 
\end{theorem}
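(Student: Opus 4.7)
The plan is to reduce the theorem, in each of the two types, to a single middle decomposition that is not covered by the general Theorems~\ref{12} and~\ref{nhalf}, and then verify that remaining case by a direct expansion in the $\widetilde{\undH}$--basis.

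First I would identify which of the required decompositions already follow from results proved earlier. In any type Theorem~\ref{12} supplies the $\bfN^2$--to--$\bfN^1$ formula. In type $A_3$ (rank $n=3$) Theorem~\ref{nhalf} applies only for $i=3$; since $\Phi^3=\{\alpha_{13}\}$ and $\lambda-k\alpha_{13}=(a-k)\varpi_1+b\varpi_2+(c-k)\varpi_3$ is dominant precisely when $0\leq k\leq \min(a,c)$, it specializes to~\eqref{eq n4 en n4 intro}. In type $A_4$ (rank $n=4$) the same theorem covers $i=3$ and $i=4$ and should match the two corresponding formulas of Section~\ref{a4}. Thus in both types the only case left to prove is the passage from $\bfN^3$ to $\bfN^2$.

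For this remaining case I would substitute the defining formula~\eqref{pc} on both sides. The left--hand side is
\begin{equation*}
\bfN_\lambda^3 \;=\; \sum_{I\subset \Pgeq{3}}(-q)^{|I|}\,\widetilde{\undH}_{\lambda-\sum_{\alpha\in I}\alpha},
\end{equation*}
and plugging $\bfN_\mu^2=\sum_{J\subset\Pgeq{2}}(-q)^{|J|}\widetilde{\undH}_{\mu-\sum_{\alpha\in J}\alpha}$ into the proposed right--hand side yields a double sum over pairs $(\mu,J)$. The identity then becomes: for every weight $\nu$, the coefficient of $\widetilde{\undH}_\nu$ obtained from the double sum must equal the coefficient obtained from the single sum over $I\subset\Pgeq{3}$.

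The main obstacle is controlling the cancellation between these pairs for weights $\nu$ that do not appear on the left. In type $A_3$, where $\Pgeq{2}=\{\alpha_{12},\alpha_{23},\alpha_{13}\}$ and $\Pgeq{3}=\{\alpha_{13}\}$, the identity amounts to a case analysis on $\nu$, and I would try to package the cancellations as a sign--reversing involution on contributing pairs $(\mu,J)$; the natural candidate trades an occurrence of $\alpha_{13}$ in $J$ against a shift of $\mu$ by $\alpha_{13}$ (modifying the parameter $l$ in the definition of $I_\lambda$ accordingly), and the requirement that the corresponding $q$--powers match is exactly what fixes the weight function $d(\mu)=n+m+2l$. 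A second subtlety is that $\widetilde{\undH}_\nu$ vanishes when $\nu$ is singular and changes sign via $(-1)^{\ell(w_\nu)}$ when regular; this is what forces the precise definition of $I_\lambda$ as the dominant weights of the form $\lambda-n\alpha_{12}-m\alpha_{23}$ (or a further shift by $\alpha_{13}$, allowed only when the intermediate weight lies in $\bbN\varpi_1+\bbN\varpi_3$). In type $A_4$ the same scheme applies but $\Pgeq{2}$ and $\Pgeq{3}$ now contain six and three roots respectively, so the analog of $I_\lambda$ from Section~\ref{a4} is larger and the dominance constraints rule out more configurations; I expect the bulk of the work to lie in this $A_4$ verification, which presumably makes up the main body of that section.
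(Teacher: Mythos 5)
Your reduction is right: Theorem~\ref{12} handles $\bfN^2$ in terms of $\bfN^1$, Theorem~\ref{nhalf} handles the upper range ($i=3$ in $A_3$; $i=3,4$ in $A_4$), and the one case that remains in each type is $\bfN^3$ in terms of $\bfN^2$. Where you diverge is in how that middle step is handled, and your proposed route has a gap that the paper's argument is specifically designed to avoid.

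You propose to expand $\bfN^3_\lambda$ and each $\bfN^2_\mu$ in the $\widetilde{\undH}$-alphabet and to match coefficients via a sign-reversing involution. But $\{\widetilde{\undH}_\nu\}_{\nu\in X}$ is not a basis: $\widetilde{\undH}_\nu$ is $\pm\undH_{\overline\nu}$ or $0$, so a given $\undH$-basis element receives contributions from a whole dot-orbit of $\nu$'s with varying signs. ``Comparing the coefficient of $\widetilde{\undH}_\nu$'' is therefore not well-posed until you first push everything to the genuine $\undH$-basis, and at that point the bookkeeping in $A_4$ (where the right-hand side already ranges over infinitely many $\mu$ with nontrivial $P_i(\lambda,\mu)$, each contributing a $6$-fold sum over $J\subset\Pgeq 2$) is formidable. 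You flag this as ``the bulk of the work,'' but you neither exhibit the involution nor address how it interacts with singular weights and the sign $(-1)^{\ell(w_\nu)}$; as written this is a sketch of a plan, not a proof.

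The paper sidesteps this entirely by working with the \emph{inverse} decomposition. It first computes, by a short case analysis using Lemma~\ref{lemma X equal to zero} and the auxiliary $\bfM^A_\mu$ elements, how $\bfN^2_\lambda$ expands in the $\{\bfN^3_\mu\}$ basis (Equation~\eqref{generic decomposition} and Table~\ref{tab:decom N1 in N2} in $A_3$, Proposition~\ref{Propo N2 in N3 without hat} and Table~\ref{tab:decom N2 in N3 for A4} in $A_4$): this direction is finite and has a small number of cases. It then defines $\Nhat^3_\lambda$ to be the conjectural right-hand side and shows (Lemma~\ref{lemma decomposition hat}, resp.\ Propositions~\ref{Propo generic case A4} and~\ref{propo N3 in terms of N2 with hat}) that $\bfN^2_\lambda$ expands with \emph{the same coefficients} in $\{\Nhat^3_\mu\}$. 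Since both $\{\bfN^3_\mu\}$ and $\{\Nhat^3_\mu\}$ are unitriangular families, this forces $\bfN^3_\lambda=\Nhat^3_\lambda$. If you want to salvage your direct approach, you would need to convert to the $\undH$-basis before comparing and then prove a genuine cancellation statement; otherwise, adopt the paper's inversion trick, which is precisely what keeps the case analysis manageable.
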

We remark that the corresponding formulas in type $A_2$ were found by the first two authors of this paper in \cite{libedinsky2020affine} and in type $A_1$ they are trivial. The following is the central conjecture of this paper. 
 \begin{conj}\label{positivity}
 If $\Phi$ is a root system of  type $A_n$, for each $i\geq 0$  we have
 \[ \bfN_{\lambda}^{ i+1} \in \sum_{\mu \in X^+}\mathbb{N}[q]\ \bfN_{\mu}^{ i} \]
 \end{conj}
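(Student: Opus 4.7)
The plan is to prove \Cref{positivity} by extracting the transition matrix from $\bfN^{i+1}$ to $\bfN^i$ from the defining signed sum, and then showing that its entries, a priori in $\bbZ[q^{\pm 1}]$, are in fact polynomials in $q$ with non-negative coefficients. First I would exploit the partition $\Pgeq{i} = \Phi^i \sqcup \Pgeq{i+1}$, where $\Phi^i$ is the set of positive roots of height exactly $i$. A direct manipulation of the defining formula \eqref{pc} gives the clean identity
\[\bfN^i_\lambda \;=\; \sum_{J \subset \Phi^i} (-q)^{|J|} \, \bfN^{i+1,\mathrm{ext}}_{\lambda - \sigma(J)}, \qquad \sigma(J) := \sum_{\alpha \in J} \alpha,\]
in which $\bfN^{i+1,\mathrm{ext}}_\mu := \sum_{I \subset \Pgeq{i+1}}(-q)^{|I|}\HTIL_{\mu - \sigma(I)}$ is the natural extension of the basis $\bfN^{i+1}$ to arbitrary $\mu \in \bbZ\Phi$. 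Viewing the operation $\prod_{\alpha \in \Phi^i}(1 - q\,\tau_{-\alpha})$ (with $\tau_v$ the translation-shift) as a formal operator on functions $\bbZ\Phi \to \Htil$, the inverse is $\prod_{\alpha \in \Phi^i}\sum_{k\geq 0} q^k \tau_{-k\alpha}$, giving the formal inversion
\[\bfN^{i+1,\mathrm{ext}}_\lambda \;=\; \sum_{f\colon \Phi^i \to \bbN} q^{|f|}\, \bfN^{i,\mathrm{ext}}_{\lambda - \sigma(f)}, \qquad |f| = \sum_{\alpha} f(\alpha).\]
Restricted to dominant $\lambda \in X^+$, the left-hand side coincides with $\bfN^{i+1}_\lambda$, and manifestly all prefactors $q^{|f|}$ are in $\bbN[q]$: the positivity is formally present, but not yet proved because the right-hand side involves $\bfN^{i,\mathrm{ext}}_\mu$ at weights $\mu = \lambda - \sigma(f)$ that may lie outside $X^+$.

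The crux is therefore to rewrite each $\bfN^{i,\mathrm{ext}}_\mu$ for non-dominant $\mu$ as an $\bbN[q]$-linear combination (or zero) of $\bfN^i_\nu$ with $\nu \in X^+$. Using the $W_f$-antisymmetry of $\HTIL$ under the dot action term by term in the defining sum, each offending summand can in principle be paired with another summand carrying the opposite sign. My strategy would be to construct an explicit sign-reversing involution $\iota$ on the set of data $(f, I)$ with $f\colon \Phi^i \to \bbN$ and $I \subset \Pgeq{i+1}$ such that $\lambda - \sigma(f) - \sigma(I)$ is not dot-dominant (or singular), with the fixed points being exactly the data whose shifted weight lies in $X^+$ and corresponds to the expected positive coefficient on some $\bfN^i_\mu$. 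In type $A_n$ this has a chance of being tractable, because the roots in $\Phi^i$ are the staircase roots $\alpha_j + \alpha_{j+1} + \cdots + \alpha_{j+i-1}$ with a rigid combinatorial structure; moreover, for $i > n/2$ the roots of $\Phi^i$ do not ``overlap'' and the non-dominant shifts never actually arise, which explains the simplicity of \Cref{nhalf}. The hard part, and the main obstacle, is the middle range of $i$, where genuine overlaps produce non-dominant shifts that must be systematically matched against reflections under the dot action of simple reflections $s_\alpha$ that do not preserve $\Pgeq{i}$ (e.g.\ $s_1(\alpha_2) = \alpha_{12}$ in $A_2$ already shows that the obvious candidate for $\iota$ fails without modification).

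As a second, complementary route, I would work through the Satake isomorphism. Under this isomorphism the standard basis $\bfH_\lambda = \bfN^1_\lambda$ corresponds to Hall--Littlewood polynomials and the Kazhdan--Lusztig basis $\undH_\lambda = \bfN^{m+1}_\lambda$ to Schur functions, so the full transition $\bfN^1 \to \bfN^{m+1}$ is governed by the $q$-Kostka polynomials $K_{\lambda\mu}(q)$, known to lie in $\bbN[q]$ by the Lascoux--Schützenberger theorem via the charge statistic. \Cref{positivity} is precisely the statement that this chain of positivity factors through each intermediate basis $\bfN^i$. Accordingly, I would look for a filtration of the set of semistandard Young tableaux contributing to $K_{\lambda\mu}(q)$ that refines the charge statistic into contributions indexed by heights of positive roots, hoping that the resulting partial sums realize exactly the successive transitions $\bfN^{i+1} \to \bfN^i$. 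If such a tableau model can be identified, then positivity follows from a combinatorial count; the obstacle here is that the height filtration of positive roots is not obviously visible in the known combinatorial constructions of the charge, and matching it with the definition \eqref{pc} would likely require a new cocharge-type statistic tuned to the set $\Pgeq{i}$.
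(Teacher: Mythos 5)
This statement is \Cref{positivity}, which is stated as a \emph{conjecture} in the paper and is not proved there. The authors only supply evidence: \Cref{12} ($i=1$), \Cref{nhalf} ($i \geq n/2+1$), \Cref{A3A4} ($n \leq 4$), the Lascoux--Shimozono atomic-decomposition result (which gives positivity of $\bfN^{n+1}$ in $\bfN^2$, not the step $\bfN^{i+1}\to\bfN^i$), and computer checks in types $A_5$, $A_6$. So there is no "paper's proof" for your argument to be compared against; the problem is genuinely open.

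Your own write-up is candid that it is a plan rather than a proof, and the plan is sound as far as it goes but stops exactly at the hard part. The identity $\bfN^i_\lambda = \sum_{J\subset\Phi^i}(-q)^{|J|}\,\bfN^{i+1,\mathrm{ext}}_{\lambda-\sigma(J)}$ is a correct regrouping of \cref{pc} via $\Pgeq{i}=\Phi^i\sqcup\Pgeq{i+1}$, and the formal inversion $\bfN^{i+1,\mathrm{ext}}_\lambda=\sum_f q^{|f|}\bfN^{i,\mathrm{ext}}_{\lambda-\sigma(f)}$ is legitimate (the coefficients of each $\undH_\nu$ stabilize because, by the Kostant-type inequality used in \Cref{lambda-I}, all $\overline{\lambda-\sigma(f)-\sigma(I)}$ stay $\leq\lambda$, a finite set of dominant weights). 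But, as you yourself note, the prefactors $q^{|f|}$ being in $\bbN[q]$ does not give positivity: the terms $\bfN^{i,\mathrm{ext}}_\mu$ at non-dominant $\mu$ are signed combinations of the true basis $\{\bfN^i_\nu\}_{\nu\in X^+}$, and the sign-reversing involution you propose to handle them is exactly the missing ingredient. You give no construction of $\iota$ and acknowledge that the obvious candidate already fails in $A_2$. The Satake/charge route has the same status: you identify the obstacle (no known refinement of the charge statistic respects the height filtration of positive roots) without overcoming it. In short, both your routes are reasonable attack lines consistent with the evidence the paper collects, and your partial manipulations are correct, but what you have is a research programme, not a proof, which matches the paper's own treatment of the statement as an open conjecture.
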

 
 \begin{remark}
 The only non trivial part of the conjecture is the positivity statement. We have the following evidence to believe in the validity of this conjecture. By \Cref{12} the conjecture is  verified for $i=1$ and all $n$. By \Cref{nhalf} it is also verified for $n/2+1 \leq  i$. By \Cref{A3A4} the conjecture is verified for all $i$ when $n\leq 4$.  It was proved by Shimozono \cite{Shimo} following earlier work by Lascoux \cite{Lasc} that in type $A_n$ the decomposition of $\bfN_{\lambda}^{n+1}$ in terms of the $\bfN^2$ basis (that using \Cref{12} one proves that it is the so-called \emph{atomic decomposition}) is positive. Finally, we have checked this conjecture in several hundred cases by computer in types $A_5$ and $A_6$ using SageMath \cite{sagemath} with the help of the code developed in \cite{HIS}.
 \end{remark}
 
\begin{remark}\label{empirical}
Conjecture \ref{positivity}, as written, is false for a general root system. In type $D_4$ a counter-example \cite[Example 2.6]{LeLe} shows that $\underline{\bfH}_\lambda$ is not  positive in general when decomposed in terms of $\bfN^2$. However, as Lecouvey and Lenart explain, the failure of this positivity seems to be mild and it would be interesting to determine whether there exists a ``stable range'' where the conjecture holds.
\end{remark} 
 
 \begin{remark}
The name ``pre-canonical bases'' is inspired by this conjecture. We see Lusztig generational philosophy as a conjectural set of ``post-canonical bases'', this time interpolating between the canonical basis and the $p$-canonical basis.
\end{remark} 
 
 \begin{remark}
 Empirical data in ranks $\leq 8$ suggest that it should be possible to find a combinatorial formula for the polynomials appearing in the right-hand side of Conjecture \ref{positivity}. More precisely, we believe that for $i>1$ the polynomials $P_i(\lambda, \mu)\in \bbN[q]$ defined by the formula
 \[ \bfN_{\lambda}^{ i+1} = \sum_{\mu \in X^+}P_i(\lambda, \mu)\ \bfN_{\mu}^{ i}, \] 
 can be computed by constructing a subset 
 $\mathfrak{L} \subseteq \mathcal{L}_{\lambda}^i(\mu)$ so that 
 \begin{equation*}
     P_i(\lambda, \mu) = \sum_{L\in \mathfrak{L}} q^{\deg_i (L)}.
 \end{equation*}
 Here $\mathcal{L}^i_{\lambda}(\mu)$ is the set of all non-negative linear combinations of elements in  $\Phi^i \cup \Phi^{2i-1} \cup \Phi^{3i-2} \cup \ldots$ (i.e., roots $\alpha\in \Phi^+$ such that $i-1$ divides $\hgt(\alpha)-1)$) equal to $\lambda -\mu$  and  $\deg_i $ is a function defined on any positive root by the formula 
 \[\deg_i(\alpha):=\frac{\hgt(\alpha)-1}{i-1}.\]
  and extended $\bbZ$-linearly to any element in $\mathcal{L}^i_{\lambda}(\mu)$.

  This idea is reminiscent of Deodhar's proposal \cite{deodhar1990combinatorial} for a counting formula for Kazhdan-Lusztig polynomials recently refined by the first author and Geordie Williamson \cite{LW2}.
 \end{remark}

\subsection{Structure of the paper.} The paper  is structured as follows. We start in \Cref{dw} by reviewing root systems and (extended) affine Weyl groups (in \Cref{exampleAn} we focus on type A and give a more elementary description of these objects).
 In \Cref{satake} 
we define the spherical Hecke algebra together with its standard and Kazhdan-Lusztig bases and in \Cref{prec} we define the pre-canonical bases in detail and prove that they are bases of the spherical Hecke algebra. 

In \Cref{antiatomicsec} we prove \Cref{12} (i.e. give a formula for inverse Kazhdan-Lusztig polynomials and for inverse atomic polynomials) using root system combinatorics. In \Cref{upp} we prove \Cref{nhalf} by  defining several $\bfM^j_{\lambda}$ that interpolate between $\bfN^i_{\lambda}$ and $\bfN^{i+1}_{\lambda}$ and behave particularly well under the Weyl group action. Finally in \Cref{last} we prove \Cref{A3A4} by first defining some bases $\hat{\bfN}^{i+1}$, then analyzing  the decomposition of $\bfN^i$ in $\bfN^{i+1}$ and then checking that this decomposition is the same as the one of $\bfN^i$ in $\hat{\bfN}^{i+1}$.

\subsection{Acknowledgments.} We would like to thank the referee for pointing out the connection of this work with Hall-Littlewood polynomials and several other points that improved the quality of the exposition.

\section{Definition of the pre-canonical bases}\label{S2}

 In this section we introduce the (extended) affine Weyl group and the corresponding Hecke algebras. We refer to \cite{W2} and \cite{Knop} for more details.
 
\subsection{Dominant weights and affine Weyl groups.}\label{dw}

The reader mostly interested in type $A_{n}$   can skip this section  (that is slightly technical) and read directly \Cref{exampleAn} instead. We will need the definitions given in this section to define the spherical Hecke algebra (where the pre-canonical bases live) in Section \ref{satake}. Our definition is equivalent to that given on the literature (cf. \Cref{knop}) but it is more natural from the point of view of categorification (cf. \Cref{geoSat}).

Let $(X,\Phi,X^\vee, \Phi^\vee)$ be a reduced root datum where $X$ is the character lattice  with roots $\Phi$, and $X^\vee$ is the  cocharacter lattice with coroots $\Phi^\vee$. We fix a system of simple roots $\Delta$ and positive roots $\Phi^+$. 
We assume that our root datum is simply connected, i.e. that $X^\vee = \bbZ \Phi^\vee$. Let $\rho$ be the half-sum of all the positive roots, and $\rho^\vee$ the half-sum of all the positive coroots.
Let $\langle-,-\rangle$ denote the pairing between weights $X$ and coweights $X^\vee$.

We denote by $\leq$ the dominance order on $X$: we say that $\lambda\leq \mu$ if $\mu-\lambda$  can be written as a integral linear combination of elements in $\Phi^+$.

Let $X_\bbR:= X\otimes_\bbZ \bbR$. For a root $\alpha \in \Phi$, let $s_\alpha$ denote the corresponding reflection:
$s_\alpha:X_\bbR\ra X_\bbR$ defined as $
s_\alpha(v)=v-\langle v,\alpha^\vee\rangle \alpha$.
Let $S_f$ be the set of reflections $s_\alpha$, with $\alpha \in \Delta$. The group $W_f$ generated by $S_f$ is the (finite) Weyl group. We denote by $w_0$ the longest element in $W_f$.

The affine Weyl group $W_a$ is the subgroup of affine transformations of $X_\bbR$ generated by $W_f$ and $\bbZ\Phi$ (acting as translations). We have $W_a \cong  W_f\ltimes \bbZ \Phi$. 
The group $W_a$ can also be described as the group generated by $s_{\alpha,m}$, the reflections along the hyperplanes 
\[H_{\alpha,m}=\{\lambda \in X_\bbR \mid \langle \lambda,\alpha^\vee \rangle =m \},\]
for $\alpha\in \Phi$ and $m\in \bbZ$. The connected components of the complement of the hyperplanes \[X_\bbR \setminus \bigcup_{\alpha,m}H_{\alpha,m}\] are called alcoves.
We call $C_0=\{\lambda \in X_\bbR \mid -1 < \langle \lambda,\alpha^\vee\rangle < 0$ for any $\alpha\in \Phi^+\}$ the fundamental alcove. Then the map $w\mapsto wC_0$ defines a bijection between $W_a$ and the set of alcoves.

The walls of $C_0$ are $H_{\alpha,0}$, for $\alpha \in \Delta$, and $H_{\beta,-1}$ for $\beta$ the longest short root (so that $\beta^\vee$ is the longest coroot). We set $s_0:=s_{\beta,-1}$ to be the reflection along $H_{\beta,-1}$. Then $W_a$ is a Coxeter group with simple reflections $S=S_f \cup \{ s_0\}$.

We also consider the extended affine Weyl group $W_e$: this is the subgroup of affine transformations of $X_\bbR$ generated by $W_f$ and $X$ (where $X$ acts as translations). We have $W_{e} = W_f\ltimes  X$.\footnote{In the literature (e.g \cite{Bou,Knop}) the extended affine Weyl group is often defined as the group generated by $W_f$ and the coweight lattice $X^\vee$.}
Although $W_{e}$ is not a Coxeter group in general, one can still define the length $\ell(w)$ of an element $w\in W_e$ by counting how many hyperplanes separate $C_0$ and $wC_0$.

Let $\Omega$ be the subgroup of length $0$ elements in $W_e$. 
In other words, this is the subgroup of elements $\sigma\in W_e$ such that $\sigma (C_0)=C_0$. Hence, every element of $\Omega$ permutes the walls of $C_0$, therefore conjugation by $\Omega$ permutes the simple reflections in $W_a$, so $\Omega$
can be seen as a group of automorphisms of the Dynkin diagram of $W_a$.

The group  $\Omega$ is isomorphic to the fundamental group 
$X/\bbZ \Phi$ of the root datum. In fact, for $\lambda \in X$, let $t_\lambda\in W_e$ denote the corresponding translation. Then $w_0(C_0)+\lambda$  is an alcove, so there exists a unique element $\theta(\lambda)\in W_a$ such that $\theta(\lambda)(C_0)=w_0(C_0)+\lambda$. The map $\lambda\mapsto \theta(\lambda)^{-1}t_\lambda w_0$ defines a surjective group homomorphism from $X$ to $\Omega$ with kernel $\bbZ \Phi$. It follows that $W_e/W_a\cong X/\bbZ\Phi\cong \Omega$ and $W_{e}\cong W_a\rtimes \Omega$.

Moreover, we have $W_e/W_f\cong X$, so $W_f\backslash W_e/W_f\cong X^+$, where $X^+$ is the set of dominant weights.
We can actually refine this bijection. In fact, we have compatible decompositions
\begin{equation}\label{x+}
\begin{tikzcd}
X^+ \arrow{r} \arrow[r,"\sim"]\isoarrow{d}&  \bigsqcup_{\sigma \in X/\bbZ \Phi} (\sigma +\bbZ \Phi) \cap X^+ \isoarrow{d}\\
W_f\backslash W_e/W_f  \arrow[r,"\sim"] & \bigsqcup_{\sigma\in \Omega} W_f\backslash W_a/\sigma(W_f)
\end{tikzcd}
\end{equation}
where $\sigma(W_f):=\sigma W_f \sigma^{-1}$. 
The bottom horizontal arrow sends the double coset containing $t_\lambda$, $\lambda\in X^+$, to the double coset containing $\theta(\lambda)$, for $\sigma=\theta(\lambda)^{-1}t_\lambda w_0$.
In particular, for every $\sigma \in \Omega$, the map $\theta$ defines a bijection
\begin{equation*}\label{weighttocosets}\theta: (\sigma +\bbZ \Phi) \cap X^+ \xrightarrow{\sim} W_f\backslash W_a/\sigma(W_f).
\end{equation*}
This bijection intertwines the Bruhat order on the right with the dominance order on the left. Moreover, we have \[\ell(\theta(\lambda))=\ell(w_0)+2 \langle \lambda,\rho^\vee\rangle\] 
(see for example \cite[Eq. (2.9)]{Knop}).

\begin{ex}\label{exampleAn}

Since an important part of this paper is devoted exclusively to type $A_n$, we spell out the definitions in the previous section in this case.

Assume that $n\geq 2$.
 Let $X_\bbR\subset \bbR^{n+1}$ be the subspace of vectors with coordinates adding up to zero. The set of roots is $\Phi:=\{\epsilon_i-\epsilon_j : 1\leq i\neq j\leq n+1\}$. The simple roots are \[\Delta:=\{\alpha_1:=\epsilon_1-\epsilon_2,\;\alpha_2:=\epsilon_2-\epsilon_3,\;\ldots\;,\; \alpha_{n}:=\epsilon_{n}-\epsilon_{n+1}\},\] and the fundamental weights are 
 \[ \varpi_i:=(\epsilon_1+\cdots+\epsilon_i)-\frac{i}{n+1}\sum_{j=1}^{n+1}\epsilon_j. \]
 The character lattice (or weight lattice) is $X:=\mathbb{Z}\varpi_1+\mathbb{Z}\varpi_2+\cdots+\mathbb{Z}\varpi_{n}$ and the set of dominant weights is  $X^+:=\mathbb{N}\varpi_1+\mathbb{N}\varpi_2+\cdots+\mathbb{N}\varpi_{n}$.
  For $1\leq i< n+1$, define $s_i$ to be the reflection in $X_\bbR$ that permutes $\epsilon_i$ and $\epsilon_{i+1}$ (in other words, the reflection that fixes the hyperplane $x_i+x_{i+1}=0$.) The Weyl group   $W_f\subset \mathrm{End}(X_\bbR)$ is the subgroup of  endomorphisms of the vector space $X_\bbR$ generated by the $s_i$ with $1\leq i< n+1$. Let $s_0$ be the reflection through the hyperplane with equation $x_1+x_{n+1}+1=0$. The affine Weyl group $W_a$ is the subgroup  of affine endomorphisms of $X_\bbR$ generated by $S:=\{s_i : 0\leq i\leq  n\}$.

 The Dynkin diagram of $W_a$ can be seen as a regular $(n+1)$-gon with vertices $S$. Let $\Omega\cong \mathbb{Z}/(n+1)\mathbb{Z}$ be the group of cyclic permutations of this diagram (see also \Cref{Omega}). The element $\sigma_i\in \Omega$ for $1\leq i\leq n+1$ is defined by $\sigma_i(s_j)=s_{i+j}$ (where the sub-index $i+j$  is understood modulo $n+1$).

Let $w_0$ be the longest element in $W_f$.
 We now define a function $\theta: X^+\rightarrow W_a$ as follows. If $\lambda\in X^+\cap \bbZ \Phi$, then the translation $t_\lambda$ is an element of $W_a$ and we define $\theta(\lambda):=t_\lambda w_0$. If $\lambda\in X^+\setminus \bbZ \Phi$, then there exists $1\leq i\leq n$ such that $\lambda=\mu +\varpi_i$ and $\mu\in \bbZ\Phi \cap X^+$.  Then we define 
 $\theta(\lambda)=\theta(\mu)w_0 \theta(\varpi_i)$, where 
 $\theta(\varpi_i)$ is the longest element in $W_f\sigma_i(W_f)$.
 More explicitly, we have
 \[w_0\theta(\varpi_i)=
   \prod_{k=1}^i s_{[1-k,n-i-k+1]}.\]
where, for $a\leq b$, we define $s_{[a,b]}:=s_{a}s_{a+1}\cdots s_{b}$ (again, the sub-indices are understood modulo $n+1$).
By construction, we have that the right descent set of $\theta(\lambda)$ is $S\setminus\{s_i\}$.

An important property of this map is that $\lambda\leq \mu$ in the dominance order if and only if $\theta(\lambda)\leq \theta(\mu)$ in the (strong) Bruhat order. 
\end{ex}

\begin{remark}\label{Omega}
If $\Phi$ is of type $A_n$, as mentioned in \Cref{exampleAn}, 
the group $\Omega$ of length $0$ element in $W_e$ is isomorphic to the group $\bbZ/(n+1)\bbZ$ which acts on the Dynkin diagram by cycling the simple reflections $\{s_0,s_1,\ldots s_n\}$ of $W_a$.
We explain here how to obtain this isomorphism.

The isomorphism $\Omega \ra \bbZ/(n+1)\bbZ$ is given by the map $\sigma \mapsto j$, where $j$ is the index of the simple reflection $\sigma(s_0)=\sigma s_0\sigma^{-1}\in S$. For any $0\leq j\leq n$, we can define $\sigma_j\in \Omega$, to be the unique element which sends $s_0$ to $s_j$.
Then, an element $x\in W_a$ is a  maximal element in its double coset $W_f x\sigma_j(W_f)\in  W_f\backslash W_a/ \sigma_j(W_f)$ if and only if its left descent set is $S\setminus\{s_0\}$ and its right descent set is $S\setminus\{s_j\}$.

On the other hand, the fundamental weights $\varpi_i$, $1\leq i \leq n$, together with $0$ form a set of representatives of $X/\bbZ \Phi$. So we can canonically identify $X/\bbZ\Phi$ with $\bbZ/(n+1)\bbZ$ by sending $\varpi_i$ to $i\in \bbZ/(n+1)\bbZ$ (see \cite[Prop VI.2.3.6]{Bou} for more details).
Hence, if we write a weight $\lambda$ in the basis of fundamental weights as $\lambda= \sum_{i=0}^n a_i\varpi_i$, then its class in $\bbZ/(n+1)\bbZ$ is given by $\sum_{i=1}^n ia_i$. We can summarize this information in the following commutative diagram.

\begin{center}
\begin{tikzpicture}
\node (A) at (0,0) {$X/\bbZ\Phi$};
 \node (B) at (4,0) {$\Omega$};
 \node (C) at (2,-2) {$\bbZ/(n+1)\bbZ$};
\draw[->] (A) -- node[left]{$\varpi_i\mapsto  i$} (C);
\draw[->] (A) --node[above]{$\lambda\mapsto \theta(\lambda)^{-1}t_\lambda w_0$} (B); 
\draw[->] (B) -- node[right]{$\sigma \mapsto j$ s.t. $\sigma(s_0)=s_j$} (C);
\end{tikzpicture}
\end{center}

Every fundamental weight $\varpi_i$ is the minimal element in the set $(\sigma +\bbZ \Phi) \cap X^+$, hence $\theta(\varpi_i)$ is the longest element in the double coset $W_f \sigma_i(W_f)$.
More generally, we can compute $\theta$ recursively as follows. Assume that we know $\theta(\lambda)$ for $\lambda\in X^+$ and let $\sigma\in \Omega$ be the class of $\lambda$, then $\theta(\lambda+\varpi_i)=\theta(\lambda)\sigma(w_0^{-1}\theta(\varpi_i))=\theta(\lambda)\sigma w_0^{-1}\theta(\varpi_i)\sigma^{-1}=t_\lambda \theta(\varpi_1)\sigma^{-1}$.
Notice that $\ell(w_0^{-1}\theta(\varpi_i))=2\langle \varpi_i,\rho^\vee\rangle=\ell(\theta(\lambda+\varpi_i))-\theta(\lambda)$, so we obtain a reduced expression of $\theta(\lambda+\varpi_i)$ simply by stacking together a reduced expression of $\theta(\lambda)$ and one of $\sigma(w_0^{-1}\theta(\varpi_i))$. By recursion, in this way one can easily obtain  reduced expressions of every $\theta(\lambda)$, for $\lambda\in X^+$.
\end{remark}

\subsection{The spherical Hecke Algebra.}\label{satake}


Let $\calH$ be the Hecke algebra of $W_a$ over $\bbZ[v,v^{-1}]$ with standard basis $\{\bfH_x\}$ and Kazhdan-Lusztig basis $\{\undH_x\}$. Recall that we use the convention $q:=v^2$.
The action of $\Omega$ on $W_a$ naturally extends to an action by algebra automorphisms on $\calH$, where $\sigma\in \Omega$ sends $\bfH_x$ to $\bfH_{\sigma(x)}$ for any $x\in W_a$.

Let $\undH_f:=\undH_{w_0}$ be the Kazhdan-Lusztig basis element for the longest element $w_0\in W_f$. We define
\[\calH^\sigma:=\undH_f \calH \cap \calH \sigma(\undH_f)\subset \calH\]
Similarly, for $\sigma, \tau \in \Omega$ we can define 
${}^\sigma\calH^\tau:=\sigma(\undH_f) \calH \cap \calH \tau(\undH_f)$. Notice that the action by $\tau$ induces an isomorphism of $\bbZ[v,v^{-1}]$-modules
$\calH^\sigma \cong {}^\tau\calH^{\tau\sigma}$.
 
 For a finite subgroup $H$ of $W_a$ we denote by $\pi_{H}(q)$ the \emph{Poincar\'e polynomial} of $H$, defined as $\pi_{H}(q)=\sum_{w\in H}q^{\ell(w)}$.
As in \cite[\S 2.3]{W4}, we can arrange all the $\calH^\sigma$, for $\sigma \in \Omega$, in an $\Omega$-graded algebra 
\begin{equation}\label{omega}
\Htil:= \bigoplus_{\sigma \in \Omega} \calH^\sigma
\end{equation}
where multiplication
$\calH^\tau \times \calH^\sigma \ra \calH^{\tau\sigma}$ is defined via
\[\calH^\tau \times \calH^\sigma \xrightarrow{\sim} \calH^\tau \times {}^\tau\calH^{\tau\sigma}  \ra \calH^{\tau\sigma}\]
\[ (x,y) \ra (x,\tau(y)) \ra \frac{v^{\ell(w_0)}}{\pi_{W_f}(v^2)}x\tau(y)\]
(here $x\tau(y)$ is the product of $x$ and $\tau(y)$ as elements of $\calH$).
Notice that $\undH_f$ is the unity of the algebra $\Htil$ (cf. \cite[Eq. (2.2.4)]{W4}). For $\lambda\in X^+$ we define 
\begin{equation}\label{standard}
\bfH_\lambda:=\sum_{x\in W_f\theta(\lambda)\sigma(W_f)}v^{\ell(\theta(\lambda))-\ell(x)}\bfH_x
\end{equation}
Then $\{\bfH_\lambda\}$ is the standard basis of $\Htil$.
For an element $x\in W_a$, we have $\undH_x\in \calH^\sigma$ if and only if $x$ its maximal in its double coset $W_f x \sigma(W_f)$. Since $\theta(\lambda)$ is by definition maximal in its double coset, we can define $\undH_\lambda:=\undH_{\theta(\lambda)}\in \calH^\sigma$. The set $\{\undH_\lambda\}$ is the Kazhdan-Lusztig basis of $\Htil$.

We can write 
\[\undH_\lambda = \sum_{\mu \leq \lambda} h_{\mu,\lambda}(v)\bfH_\mu\]
where $h_{\lambda,\lambda}(v)=1$ and $h_{\mu,\lambda}(v)\in v\bbZ_{\geq 0}[v]$.  We set $h_{\mu,\lambda}(v)=0$ if $\mu\not\leq \lambda$. The polynomials $h_{
\mu,\lambda}(v)$ are called Kazhdan-Lusztig polynomials, and coincide with the Kazhdan-Lusztig polynomials of $\calH$, i.e. we have  $h_{\mu,
\lambda}(v)=h_{\theta(\mu),\theta(\lambda)}(v)$.
\begin{remark}\label{knop}
The definition of the spherical Hecke algebra used in \cite{Knop} is slightly different. Knop defines it as a subring of the \emph{extended affine Hecke algebra}. It is easy to use the bijections in \cref{x+} to show that the two definitions are equivalent.
\end{remark}

\begin{remark}\label{geoSat}
There is an isomorphism $\Htil \cong \bbZ[v,v^{-1}][X]^{W_f}$, called the Satake isomorphism (see \cite{Knop,Ste3} for more details). Under this isomorphism, the Kazhdan-Lusztig basis $\undH_\lambda$ is sent to the Weyl characters and the standard basis $\bfH_\lambda$ is sent to Hall-Littlewood polynomials of the reductive group $G$ associated with the root datum \cite{Lu3}. 

The Satake isomorphism admits a categorification, called the \emph{geometric Satake isomorphism} \cite{MV}, as an equivalence of monoidal categories between the category of finite dimensional representations of $G$ and the category of equivariant perverse sheaves on the affine Grassmannian of the Langlands dual group $G^\vee$. 
\end{remark}

We can define an additional basis of $\Htil$ as follows.
\begin{equation}\label{Nbasis}\bfN_\lambda := \sum_{\mu \leq \lambda} v^{2\hgt(\lambda-\mu)}\bfH_\mu =\sum_{\mu \leq \lambda} v^{\ell(\theta(\lambda))-\ell(\theta(\mu))}\bfH_\mu. 
\end{equation}

For $x\in W_a$, define 
\begin{equation}\label{N}
    \bfN_x:=\sum_{y\leq x}v^{\ell(x)-\ell(y)}\bfH_y\in \mathcal{H}.
\end{equation} 

It is easy to check that equations \cref{Nbasis} and \cref{N} coincide, i.e. $\bfN_{\lambda}=\bfN_{\theta(\lambda)}$.
We can write
\[ \undH_{\lambda} = \sum_{\mu \leq \lambda} a_{\lambda,\mu}(v) \bfN_\mu\]
for some polynomials  $a_{\lambda,\mu}(v)\in \bbZ[v]$. The polynomials $a_{\lambda,\mu}(v)$ are called \emph{atomic polynomials} and one says that $\undH_\lambda$ admits an \emph{atomic decomposition} if $a_{\lambda,\mu}(v)\in \bbZ_{\geq 0}[v]$ for every $\mu\in X^+$. We remark that we follow the convention from \cite{LeLe}, but the sub-indices are inverted in the $a$ polynomial with respect to Kazhdan-Lusztig polynomials, i.e. in the first position one has the biggest weight.  

In type $\tilde{A}_n$ there is an atomic decomposition, as proved by Lascoux \cite{Lasc} and Shimozono \cite{Shimo}. In \cite{LeLe} the atomic decomposition in type $\tilde{A}_n$ was reproved using crystals. In their paper, Lecouvey and Lenart also consider  other types and, even if the atomic decomposition does in general fail, they conjecture that it still holds under some mild assumptions on $\lambda$.

\subsection{Kostka-Foulkes polynomials.}

Kazhdan-Lusztig polynomials in the spherical Hecke algebra can be reinterpreted, up to normalization, as Kostka-Foulkes polynomials $K_{\lambda,\mu}(q)$, and in particular they give $q$-weight multiplicities of the irreducible representations of the reductive group associated to the root datum \cite{Lu3}.
To recall the definition of the Kostka-Foulkes polynomials we first need to introduce the ($q$-analog of) the Kostant partition function.

\begin{definition}
Let  $Q^+\subset X$ be the positive part of the root lattice, i.e. $Q^+$ is the subset of weights that can be written as $\sum_{i=1}^n c_i\alpha_i$, with $c_i\in \bbZ_{\geq 0}$. Let $\kpf_q:Q^+\ra \bbQ[q]$ be the function defined by 
\begin{equation}\label{kpf}
\prod_{\alpha\in \Phi^+}\left(\sum_{k\geq 0} q^k e^{-k\alpha}\right)=\sum_{\alpha\in Q^+} \kpf_q(\alpha)e^{-\alpha}. 
\end{equation}

The function $\kpf_q:Q^+\ra \bbQ[q]$ is called the \emph{$q$-analog of the Kostant partition function}.
We trivially extend the definition of $\kpf_q$ to the set of weights $X$ by imposing that $\kpf_q(\alpha)=0$ if $\alpha\not \in Q^+$.
 \end{definition}

\begin{definition}
For $\lambda,\mu\in X^+$ with $\mu\leq \lambda$, the \emph{Kostka-Foulkes polynomials} are defined as 
\begin{equation}\label{KF} K_{\lambda,\mu}(q)=\sum_{w\in W_f}(-1)^{\ell(w)} \kpf_q(w(\lambda+\rho)-\mu -\rho).
\end{equation}
\end{definition}

For any $\lambda,\mu\in X^+$ with $\mu\leq \lambda$ we have $K_{\lambda,\mu}(q)\in \bbZ_{\geq 0}[q]$. 
We remark that in general \Cref{KF} does not lead to a polynomial with positive coefficients if $\mu\not\in X^+$.

Kostka-Foulkes polynomials are the Kazhdan-Lusztig polynomials in the spherical Hecke algebra. More precisely, we have by \cite[Theorem 1.8]{kato1982spherical} 
\[h_{\mu,\lambda}(v)=K_{\lambda,\mu}(v^2).\]
(Kato uses the alternative parametrization $h_{\mu,\lambda}(v)=v^{\ell(\theta(\lambda))-\ell(\theta(\mu))}P_{\theta(\mu),\theta(\lambda)}(v^{-2})$ of Kazhdan-Lusztig polynomials.)  We remark again that we follow the conventions in the literature and that the Kostka-Foulkes polynomials have (as the atomic polynomials) their sub-indices inverted with respect to the Kazhdan-Lusztig polynomials.

\subsection{The pre-canonical bases.}\label{prec}
We introduce now a new set of bases of the spherical Hecke algebra $\Htil$, that we call the pre-canonical bases. Roughly speaking, one can think of these new bases as an interpolation between the standard basis and the Kazhdan-Lusztig basis.

\begin{definition}
We say that a weight $\lambda\in X$ is \emph{singular} if there is an element $s\in W_f$ which fixes $\lambda+\rho$. Equivalently, $\lambda$ is singular if there is a root $\alpha$ such that $\langle \lambda + \rho,\alpha^\vee\rangle = 0$. A non-singular weight is called \emph{regular}.
\end{definition}
Let us recall that the \emph{dot action} (also called the \emph{affine action}) of the finite Weyl group $W_f$ on the set of weights is given  by the formula
\[w\cdot \lambda= w(\lambda +\rho)-\rho.\]
Notice that every dominant weight is regular. In the other direction, for $\lambda\in X$ regular we define $w_\lambda\in W_f$ to be the unique element such that $w_\lambda \cdot \lambda $ is dominant. Moreover, let $\overline{\lambda}:=w_\lambda\cdot \lambda \in X^+$.
We extend the definition of $\undH_\lambda$ to non-dominant weights.

\begin{definition} \label{defi H tilde bar}
	We define
	\begin{equation*}
	\widetilde{\undH}_\lambda	=  \left\{
	\begin{array}{rl}
	(-1)^{\ell(w_{\lambda})} \undH_{\overline {\lambda}},	& \mbox{if } \lambda \mbox{ is regular;}   \\
	0,& \mbox{if } \lambda \mbox{ is singular.}
	\end{array} 
	\right.
	\end{equation*}
\end{definition}

For an integer $ i\geq 1$ define $\Pgeq{i}$ to be the set of positive roots with height at least $i$.
We are now ready to  define the main characters of the present paper. 

\begin{definition}[Pre-canonical bases]\label{definition pre-canonical basis}
 For  $i\geq 2$ and $\lambda\in X^+$ we define
\begin{equation} \label{defin elements N}
 	\bfN_{\lambda}^{ i} :=\sum_{I\subset \Pgeq{i} } (-v^2)^{|I|}\widetilde{\undH}_{\lambda - \sum_{\alpha\in I}\alpha}.
 \end{equation}
\end{definition}

As it will follow from Part \ref{part3} of \Cref{lambda-I}, for any $i\geq 2$, the set $\{\bfN^i_\lambda\}$
is unitriangular with respect to the Kazhdan-Lusztig basis $\{\undH_\lambda\}$. Therefore,  the set $\{\bfN^i_\lambda\}$ is indeed a basis of the spherical Hecke algebra $\Htil$.


\begin{definition}\label{phiw}
For $w\in W_f$, let $\Phi^+_w$ be the subset of positive roots $\alpha$ such that $w(\alpha) \in \Phi^+$ and let   $\Phi^+_{-w}=\Phi^+\setminus \Phi^+_w$. 
\end{definition}

For a subset $I\subset \Phi$ let $\Sigma_I:=\sum_{\alpha\in I}\alpha$. 

\begin{lemma}\label{lambda-I} 
Let $\lambda\in X^+$ and $I\subset \Phi^+$. Then
\begin{enumerate}
    \item $\overline{\lambda-\Sigma_I}\leq \lambda$.
    \item \label{part2}$\overline{\lambda-\Sigma_I}= \lambda$ if and only if there exists $w\in W_f$ such that $I=\Phi^+_{-w}$ and $w(\lambda)=\lambda$.
    \item \label{part3} For $i\geq 2$, if $I\subset \Pgeq{i}$ and $I\neq \emptyset$, then $\overline{\lambda-\Sigma_I}< \lambda$.
\end{enumerate}
\end{lemma}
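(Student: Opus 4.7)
The plan is to compute $\lambda - \overline{\lambda-\Sigma_I}$ explicitly under the standing assumption that $\lambda - \Sigma_I$ is regular (otherwise the statement is vacuous), and reduce all three parts to standard facts about inversion sets. Set $w := w_{\lambda - \Sigma_I}\in W_f$, so that $\overline{\lambda-\Sigma_I}+\rho = w(\lambda+\rho - \Sigma_I)$, and decompose $I = I_+ \sqcup I_-$ with $I_{\pm} := \{\alpha\in I : w(\alpha)\in \Phi^{\pm}\}$. Then $I_- \subset \Phi^+_{-w}$, and the map $\alpha \mapsto -w(\alpha)$ identifies $I_-$ with a subset of $\Phi^+_{-w^{-1}}$. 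Throughout I use the classical identity $\rho - w\rho = \Sigma_{\Phi^+_{-w^{-1}}}$ and the dominance fact that $\lambda - w\lambda \in \bbN\Phi^+$ for $\lambda$ dominant.

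For Part~(1), rearranging the equation defining $w$ yields
\[
\lambda - \overline{\lambda-\Sigma_I} \;=\; \bigl((\lambda+\rho) - w(\lambda+\rho)\bigr) + w(\Sigma_I) \;=\; \Sigma_{\Phi^+_{-w^{-1}}} + (\lambda - w\lambda) + \Sigma_{w(I_+)} - \Sigma_{-w(I_-)}.
\]
Since $-w(I_-) \subset \Phi^+_{-w^{-1}}$, the combination $\Sigma_{\Phi^+_{-w^{-1}}} - \Sigma_{-w(I_-)}$ simplifies to $\Sigma_{\Phi^+_{-w^{-1}}\setminus (-w(I_-))} \in \bbN\Phi^+$. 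Adding $(\lambda - w\lambda)$ and $\Sigma_{w(I_+)}$, both in $\bbN\Phi^+$, gives $\lambda - \overline{\lambda-\Sigma_I} \in \bbN\Phi^+$, i.e.\ $\overline{\lambda-\Sigma_I}\leq \lambda$.

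For Part~(2), the direction ($\Leftarrow$) is immediate: if $I = \Phi^+_{-w}$ and $w\lambda=\lambda$ then $\Sigma_I = \rho - w^{-1}\rho$, so $w(\lambda-\Sigma_I+\rho) = \lambda+\rho$ is dominant regular, giving $\overline{\lambda-\Sigma_I}=\lambda$. For ($\Rightarrow$), assume $\overline{\lambda-\Sigma_I} = \lambda$. Extracting $\Sigma_I = (\lambda - w^{-1}\lambda) + \Sigma_{\Phi^+_{-w}}$ from the defining relation, and noting $I\setminus \Phi^+_{-w} = I_+$ and $\Phi^+_{-w}\setminus I = \Phi^+_{-w}\setminus I_-$, yields
\[
\Sigma_{I_+} - \Sigma_{\Phi^+_{-w}\setminus I_-} \;=\; \lambda - w^{-1}\lambda.
\]
Applying $w$ to both sides produces
\[
\Sigma_{w(I_+)} + \Sigma_{-w(\Phi^+_{-w}\setminus I_-)} \;=\; w\lambda - \lambda,
\]
where the two sets on the left lie respectively in $\Phi^+\setminus \Phi^+_{-w^{-1}}$ and $\Phi^+_{-w^{-1}}$, hence are disjoint subsets of $\Phi^+$. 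The left-hand side is then a sum of distinct positive roots (thus in $\bbN\Phi^+$), while the right-hand side is in $-\bbN\Phi^+$; both equal $0$. Since a sum of distinct positive roots vanishes only for the empty sum, we obtain $I_+ = \emptyset$ and $\Phi^+_{-w}\setminus I_- = \emptyset$, hence $I = \Phi^+_{-w}$, and $w\lambda = \lambda$.

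Part~(3) follows at once. For any non-identity $w\in W_f$, any reduced expression $w = s_{i_1}\cdots s_{i_\ell}$ satisfies $\alpha_{i_\ell}\in \Phi^+_{-w}$, so every non-empty inversion set contains a simple root. Since $I \subset \Pgeq{i}$ with $i\geq 2$ contains no simple roots and $I\neq \emptyset$, $I$ cannot equal $\Phi^+_{-w}$ for any $w\in W_f$; Part~(2) then excludes $\overline{\lambda-\Sigma_I}=\lambda$ and Part~(1) upgrades this to strict inequality. The main bookkeeping step is the decomposition $I = I_+ \sqcup I_-$ and the identification of $-w(I_-)$ as a subset of $\Phi^+_{-w^{-1}}$; once this is set up, both Parts~(1) and~(2) reduce to the elementary fact that a sum of distinct positive roots equals zero only for the empty sum.
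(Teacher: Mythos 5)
Your proof is correct, and it takes a genuinely more self-contained route than the paper. The paper proves Parts (1) and (2) by writing $\overline{\lambda-\Sigma_I}=w(\lambda)+w(\rho-\Sigma_I)-\rho$ for $w=w_{\lambda-\Sigma_I}$ and then outsourcing the two key inequalities: it cites a lemma of Kostant for $w(\rho-\Sigma_I)\leq\rho$ and a lemma of Sch\"utzer for the equality case $w(\rho-\Sigma_I)=\rho \iff I=\Phi^+_{-w}$, combining these with the standard fact $w(\lambda)\leq\lambda$. You instead reprove both cited facts inline: the decomposition $I=I_+\sqcup I_-$ together with the bijection $\alpha\mapsto -w(\alpha)$ from $\Phi^+_{-w}$ onto $\Phi^+_{-w^{-1}}$ and the identity $\rho-w\rho=\Sigma_{\Phi^+_{-w^{-1}}}$ let you write $\lambda-\overline{\lambda-\Sigma_I}$ explicitly as a sum of distinct positive roots plus $\lambda-w\lambda$, from which Part (1) is immediate and Part (2) follows by the disjointness of the cone $\bbN\Phi^+$ and its negation. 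The paper's version is shorter because it leans on the literature; yours is longer but elementary and makes visible exactly which roots contribute, which the paper's argument hides inside the citations. Part (3) is handled identically in both (every non-trivial inversion set contains a simple root). One small bookkeeping remark: in Part (2), direction $(\Leftarrow)$, you correctly observe that the given $w$ with $I=\Phi^+_{-w}$, $w\lambda=\lambda$ sends $\lambda-\Sigma_I+\rho$ to the dominant regular weight $\lambda+\rho$, which forces $w=w_{\lambda-\Sigma_I}$ by uniqueness; this step is implicit in your write-up but worth stating, since the rest of your computation is phrased in terms of $w_{\lambda-\Sigma_I}$.
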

\begin{proof}
This is proved in \cite[Theorem 2.1]{Sch}. We rewrite here the proof for convenience.

We have $\overline{\lambda-\Sigma_I}=w(\lambda+\rho-\Sigma_I)-\rho$ for $w=w_{\lambda-\Sigma_I}$. Since $w(\lambda)\leq \lambda$ and $w(\rho-\Sigma_I)\leq \rho$ by \cite[Lemma 5.9]{Kost}, we have $\overline{\lambda - \Sigma_I}\leq \lambda$. Moreover, $w(\rho-\Sigma_I)= \rho$ if and only if $I=\Phi^+_{-w}$ by \cite[Lemma 4.8]{Sch}. Hence $\overline{\lambda - \Sigma_I}= \lambda$ if and only if there exists an element
 $w\in W_f$ such that $I=\Phi^+_{-w}$  and $w(\lambda)=\lambda$. This shows the first two parts of the Lemma.
 
 For the last part, notice that for $w\neq id$, the set $\Phi^+_{-w}$ always contains a simple root, so it cannot be contained in $\Pgeq{i}$ for $i\geq 2$.
\end{proof}

 We would like to define $\bfN^1_\lambda$ similarly to \cref{defin elements N}, but we need to take into account that Part \ref{part3} of \Cref{lambda-I} does not hold for $i=1$ since there exists subsets $I\subset \Pgeq{1}$ with $I=\Phi^+_{-w}$ and $w\in W^f$. This leads to the following definition.

\begin{definition}[First pre-canonical basis]\label{First}
For any $\lambda \in X^+$ we  define
\begin{equation*} \label{defin elements N1}
 	\bfN_{\lambda}^{1} :=\frac{1}{\pi_{W^\lambda}(v^2)}\sum_{I\subset \Phi^+ } (-v^2)^{|I|}\widetilde{\undH}_{\lambda - \sum_{\alpha\in I}\alpha}.
 \end{equation*}
 where $\pi_{W^\lambda}(q)$ is the Poincar\'e polynomial of $W^\lambda:=\mathrm{stab}_{W_f}(\lambda)$.
\end{definition}

Notice that $|\Phi^+_{-w}|=\ell(w)$.
By \Cref{lambda-I}(\ref{part2}) we see that the coefficient of $\undH_\lambda$ in $\bfN_\lambda^1$ is
\[\frac{1}{\pi_{W^\lambda}(v^2)}\sum_{w\in W^\lambda}(-v^2)^{|\Phi^+_{-w}|}(-1)^{\ell(w)}=1.\]
Hence, by unitriangularity, the set $\{\bfN_\lambda^1\}$ is also a basis of $\Htil$. 

\begin{remark}\label{satake2}
It is easy to see that our definition of $\bfN^1_\lambda$  coincides, after applying the Satake isomorphism, with the definition of the Hall-Littlewood polynomials (compare e.g. with  \cite[Eq. (1)]{Ste3}). This immediately implies that $\bfN^1_\lambda$ is the standard basis element $\bfH_\lambda$, proving the first half of \Cref{12}. For the reader's convenience we reprove this fact in  \Cref{std=n1} avoiding the recourse to the Satake isomorphism.
\end{remark}

\begin{definition}
We call $\{\bfN_{\lambda}^{i} : \lambda\in  X^+\}$ the $i^{\text{th}}$ \emph{pre-canonical basis}.    
\end{definition}

\section{The first and second pre-canonical bases}\label{antiatomicsec}

From the definition of the $\bfN^i_\lambda$ it is clear that if $i> m$, where $m$ is the height of the highest root in $\Phi^+$, we have $\bfN^i_\lambda=\undH_\lambda$. The goal of this section is to show that also $\bfN^1_\lambda$ and $\bfN^2_\lambda$ are in fact familiar (and previously introduced) objects. Namely, $\bfN^1_\lambda$ and $\bfN^2_\lambda$ are respectively the standard basis and the $\bfN$-basis \cref{Nbasis} of $\Htil$.
We start by considering $\bfN^2$.

\begin{theorem}(Anti-atomic formula) \label{antiatomic}
For every $\lambda\in X^+$ we have
	\begin{equation}\label{antiatomiceq}
	\mathbf{N}_\lambda = \bfN_\lambda^2=\sum_{I\subset \Pgeq{2}} (-v^2)^{|I|}\widetilde{\undH}_{\lambda - \Sigma_I}.
	\end{equation}
\end{theorem}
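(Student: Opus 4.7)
The plan is to verify the equality $\mathbf{N}_\lambda = \bfN_\lambda^2$ (the equality $\bfN_\lambda^2 = \sum_I(-v^2)^{|I|}\widetilde{\undH}_{\lambda-\Sigma_I}$ is immediate from the definition) by comparing coefficients in the standard basis. By \cref{Nbasis}, the coefficient of $\bfH_\nu$ in $\mathbf{N}_\lambda$ is $v^{2\hgt(\lambda-\nu)}$ when $\nu\leq\lambda$ and zero otherwise, so the task reduces to verifying that the same formula holds for $[\bfH_\nu]\bfN_\lambda^2$.

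The first ingredient is a formula for $[\bfH_\nu]\widetilde{\undH}_\mu$ valid for every $\mu\in X$ and $\nu\in X^+$. Combining Kato's identification of spherical Kazhdan--Lusztig polynomials with Kostka--Foulkes polynomials and Lusztig's expression $K_{\lambda,\mu}(q) = \sum_{w\in W_f}(-1)^{\ell(w)}\kpf_q(w(\lambda+\rho)-\mu-\rho)$, with the sign $(-1)^{\ell(w_\mu)}$ in the definition of $\widetilde{\undH}_\mu$ absorbed by a change of variable in the $W_f$-sum (and the singular case reduced to $0$ on both sides by pairing terms via a reflection fixing $\mu+\rho$), one obtains
\[[\bfH_\nu]\widetilde{\undH}_\mu = \sum_{w\in W_f}(-1)^{\ell(w)}\kpf_q(w(\mu+\rho)-\nu-\rho).\]
Plugging this into the definition of $\bfN_\lambda^2$, interchanging the sums over $I$ and $w$, and invoking $\sum_{I\subset \Pgeq{2}}(-q)^{|I|}e^{-w\Sigma_I}=\prod_{\alpha\in\Pgeq{2}}(1-qe^{-w\alpha})$ together with the generating function $\kpf_q(\eta) = [e^{-\eta}]\,F$ for $F := \prod_{\alpha\in\Phi^+}(1-qe^{-\alpha})^{-1}$, one arrives at the compact formula
\[[\bfH_\nu]\bfN_\lambda^2 \;=\; [e^{\nu+\rho}]\,F\cdot J\!\left(e^{\lambda+\rho}\prod_{\alpha\in\Pgeq{2}}(1-qe^{-\alpha})\right),\]
where $J(f):=\sum_{w\in W_f}(-1)^{\ell(w)}w(f)$ is the Weyl antisymmetrizer.

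The decisive algebraic input is the telescoping identity $F\cdot\prod_{\alpha\in\Pgeq{2}}(1-qe^{-\alpha}) = \prod_{\alpha\in\Delta}(1-qe^{-\alpha})^{-1} =: G$, whose coefficient satisfies $[e^{-\delta}]G = q^{\hgt(\delta)}$ for $\delta\in\bbN\Delta$ and vanishes otherwise. Expanding $J$ explicitly, the $w=e$ summand of $F\cdot J(\cdots)$ already simplifies to $e^{\lambda+\rho}G$, whose coefficient of $e^{\nu+\rho}$ equals exactly $q^{\hgt(\lambda-\nu)}$ if $\nu\leq\lambda$ and $0$ otherwise, matching the desired value. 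The main obstacle is to verify that the remaining $w\neq e$ summands contribute nothing to $[e^{\nu+\rho}]$ when $\nu\in X^+$; my plan is to transport each such summand via $w^{-1}$ inside the extraction (using $[e^\xi]f=[e^{w^{-1}\xi}]w^{-1}f$) together with the explicit formula $w^{-1}(F) = F\cdot\prod_{\alpha\in\Phi^+_{-w}}\frac{1-qe^{-\alpha}}{1-qe^{\alpha}}$, and then argue, in the spirit of Sch\"utzer's proof of the $q=1$ case, that the strict dominance of $\nu+\rho$ forces the resulting coefficient extraction to land outside the support of $G$ after accounting for the $\Phi^+_{-w}$-factors. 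Assembling the $w=e$ contribution with the vanishing of the others yields $[\bfH_\nu]\bfN^2_\lambda = q^{\hgt(\lambda-\nu)}\cdot[\nu\leq\lambda]$, completing the proof.
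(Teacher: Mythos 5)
Your route is essentially the paper's own proof, reorganized. After the $\rho$-shift, the quantity you arrive at, $F\cdot J\bigl(e^{\lambda+\rho}\prod_{\alpha\in\Pgeq{2}}(1-qe^{-\alpha})\bigr)$, is exactly $e^{\rho}\Theta(\lambda)$, where $\Theta(\lambda)$ is the rational character the paper introduces; your derivation of $[\bfH_\nu]\bfN^2_\lambda$ from Kato's identification together with Lusztig's alternating $\kpf_q$-formula is precisely the paper's ``first expansion'' of $\Theta(\lambda)$, and your $w=e$ term reproduces the ``second expansion.'' The step you label ``the main obstacle'' --- that the $w\neq e$ summands contribute nothing on $X^+$ --- is not a cleanup item; it is the actual content of the theorem, namely the paper's \Cref{Xlam} (an adaptation of Sch\"utzer's Lemma 4.2), and the paper devotes the entire second expansion to setting it up and applying it. You gesture at the right idea but do not carry it out, and two genuine issues sit inside the gesture. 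First, the paper keeps every $w$-summand inside the single completion $\frX$ by expanding $(q-e^{-w\alpha})^{-1}$ and $\tfrac{1-qe^{-w\alpha}}{q-e^{-w\alpha}}$ in the $w\alpha\in\Phi^-$ direction, so that the support of the $w$-summand is visibly of the form in \Cref{Xlam}; your $w^{-1}$-transport and the rational identity $w^{-1}(F)=F\cdot\prod_{\alpha\in\Phi^+_{-w}}\tfrac{1-qe^{-\alpha}}{1-qe^{\alpha}}$ mix factors whose natural power-series expansions live in $\frX$ and in $w^{-1}(\frX)$ respectively, so ``landing outside the support of $G$'' is not a one-line coefficient check and needs the same bookkeeping the paper does. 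Second, \Cref{Xlam} requires $\mu\in P^\lambda$, not merely $\mu\in X^+$; the paper deduces $\mu\leq\lambda$ (hence $\mu\in P^\lambda$) from its first expansion before invoking the lemma, whereas your argument would need to verify separately (say via \Cref{lambda-I} and unitriangularity of $h_{\mu,\lambda}$) that $[\bfH_\nu]\bfN^2_\lambda$ vanishes when $\nu\not\leq\lambda$, which you do not mention.
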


Recall Definition \ref{phiw}.
Define $\Delta_w := \Delta \cap \Phi^+_w$ and $\Delta_{-w}:= \Delta \cap \Phi^+_{-w}$.
Following \cite[\S 24.1]{Hum2}, we define $\frX$ be the space of  $\bbQ[q]$-valued functions  $f$ on $X$ whose support
(defined to be the set of $x\in X$ for which $f(x)\neq 0$) is included in a finite
union of sets of the form $\{\lambda - \sum_{\alpha\in \Phi^+}k_\alpha \alpha,\; k_\alpha \in \bbZ_{\geq 0} \}$. (Such a set is the
set of weights occurring in a Verma module $Z(\lambda)$ ). 
In other words, an element of $\frX$ can be written as $\sum_{\mu\in X}c_\mu(q)e^\mu$, where $c_\mu(q)\in \bbQ[q]$ and such that the set of $\mu$ for which $c_\mu\neq 0$ is contained in a
finite
union of sets of the form $\{\lambda - \sum_{\alpha\in \Phi^+}k_\alpha \alpha, k_\alpha \in \bbZ_{\geq 0} \}$.
Then $\frX$ is a commutative $\bbQ[q]$ algebra. If $\alpha\in \Phi^+$, the element $(1-qe^{-\alpha})$ is invertible in $\frX$ and we have
\begin{equation}\label{-ainverse}
(1-qe^{-\alpha})^{-1}=\sum_{k\geq 0} q^k e^{-k\alpha}.
\end{equation}

Similarly, $(q-e^\alpha)\in \frX$ is also invertible and we have

\begin{equation}\label{ainverse}
(q-e^{\alpha})^{-1}= -e^{-\alpha}\sum_{k\geq 0} q^k e^{-k\alpha}.
\end{equation}

For any $\lambda\in X^+$, we consider the following element in $\frX$. 

\[\Theta(\lambda)=\sum_{w \in W_f} \frac{e^{w(\lambda)}\prod_{\alpha \in \Phi^{\geq 2}}(1-qe^{-w(\alpha)})}{\prod_{\alpha \in \Phi^+_w}(1-qe^{-w(\alpha)})\prod_{\alpha \in \Phi^+_{-w}}(q-e^{-w(\alpha)})}\]

For $f\in \frX$, we write $f|_{X^+}$ for its restriction to $X^+$ (i.e. if $f=\sum_{\mu \in X} c_\mu e^{\mu}$, then $f|_{X^+}=\sum_{\mu \in X^+} c_\mu e^{\mu}$).
We will expand $\Theta(\lambda)|_{X^+}$ in two different ways, and this will lead to our theorem.
The first expansion is given by \cref{exp1}.
\[
\Theta(\lambda)|_{X^+}=\displaystyle \sum_{\substack{I\subset \Phi^{\geq 2} \\ \lambda -\Sigma_I \ \mathrm{regular}}}(-q)^{|I|}(-1)^{\ell(w_{\lambda-\Sigma_I})} \sum_{\mu \in X^+}h_{\mu,\overline{\lambda-\Sigma_I}}(q^{\frac12})e^\mu.
\]
The second expansion is given by \cref{exp2}.
\[\Theta(\lambda)|_{X^+}=\displaystyle \sum_{\substack{\mu \in X^+ \\ \mu \leq \lambda \ } } q^{\hgt(\lambda-\mu)}e^\mu.
\]
For $\mu \in X^+$ such that $\mu \leq \lambda$, by comparing the coefficient of $e^\mu$ in \cref{exp1} and \cref{exp2} we obtain:
\begin{equation}\label{mucoeff}
    q^{\hgt(\lambda-\mu)}=\sum_{\substack{I\subset \Phi^{\geq 2} \\ \lambda -\Sigma_I \ \mathrm{regular}}}(-q)^{|I|}(-1)^{\ell(w_{\lambda-\Sigma_I})} h_{\mu,\overline{\lambda-\Sigma_I}}(q^{\frac12}).
\end{equation}

\begin{remark}
The rational function $\Theta(\lambda)$ is a $q$-deformation of the \emph{layer sum polynomials} 
\[ \Theta(\lambda)_{q=1} = \sum_{w \in W_f}\frac{e^{w(\lambda)}}{\prod_{\alpha\in \Delta}(1-e^{-w(\alpha)})}.\]
We also have  $\Theta(\lambda)_{q=1}=\sum e^{\mu}$, where the sum runs over all weights $\mu\in X$ lying in the convex hull of the orbit $W_f \cdot \lambda$. This was first obtained by Postnikov \cite[Theorem 4.3]{Pos} using Brion's formula for counting lattice points in rational polytopes \cite{Brion3} and later reproved by Sch\"utzer \cite{Sch} using root system combinatorics. 

Our strategy for proving \Cref{antiatomic} is based on Sch\"utzer's approach. 
We have carefully chosen the $q$-deformation in $\Theta(\lambda)$ so that, when restricted to dominant weights, it gives the desired $q$-deformation of the RHS of \cite[Eq. (10)]{Sch} (cf. \Cref{exp1}).
\end{remark}

\begin{proof}[Proof of \Cref{antiatomic}]
On the right side of \cref{antiatomiceq} we have
\begin{align*}
    \sum_{I\subset \Phi^{\geq 2} } (-v^2)^{|I|}\widetilde{\undH}_{\lambda - \Sigma_{I}}= & \sum_{\substack{I\subset \Phi^{\geq 2} \\ \lambda -\Sigma_I \ \mathrm{regular}}}(-v^2)^{|I|}(-1)^{\ell(w_{\lambda-\Sigma_I})}\undH_{\overline{\lambda - \Sigma_I}} \\
     = & \sum_{\substack{I\subset \Phi^{\geq 2} \\ \lambda -\Sigma_I \ \mathrm{regular}}}(-v^2)^{|I|}(-1)^{\ell(w_{\lambda-\Sigma_I})}\sum_{ \substack{  \mu\leq \overline{\lambda-\Sigma_I} \\ \mu\in X^+  } } h_{\mu,\overline{\lambda-\Sigma_I}}(v)\bfH_\mu.  
\end{align*}

Hence, for every $\mu\in X^+$ such that $\mu\leq \lambda$, the coefficient of $\bfH_\mu$ in the right side is 
\begin{equation}\label{mucoeff2}
\sum_{\substack{I\subset \Phi^{\geq 2} \\ \lambda -\Sigma_I \ \mathrm{regular}}}(-v^2)^{|I|}(-1)^{\ell(w_{\lambda-\Sigma_I})} h_{\mu,\overline{\lambda-\Sigma_I}}(v).
\end{equation}

Applying \cref{mucoeff} for $q=v^2$ we see that \cref{mucoeff2} is the same as $v^{2\hgt(\lambda-\mu)}$, the coefficient of $\bfH_\mu$  in $\bfN_\lambda$.
The identity  in \Cref{antiatomic} follows.
\end{proof}




\subsection{The First Expansion \texorpdfstring{of $\Theta(\lambda)$.}{}}\label{SecExp1}

We have
\begin{equation}\label{numerator}
\prod_{\alpha \in \Phi^{\geq 2}}(1-qe^{-w(\alpha)})=\sum_{I\subset \Phi^{\geq 2}}(-q)^{|I|}e^{-w(\Sigma_I)}.
\end{equation}
Using \cref{numerator},  \cref{-ainverse} and \cref{ainverse} we can rewrite $\Theta(\lambda)$ as 
\begin{equation}\label{theta1}
    \Theta(\lambda) = \sum_{I\subset \Phi^{\geq 2} } (-q)^{|I|}\sum_{w\in W_f} e^{w(\lambda-\Sigma_I)} \prod_{\alpha\in \Phi^+_w}\left(\sum_{k\geq 0} q^k e^{-kw(\alpha)}\right)\prod_{\alpha\in \Phi^+_{-w}}\left(-e^{w(\alpha)}\sum_{k\geq 0} q^k e^{kw(\alpha)}\right)
\end{equation}

Recall that for every $w\in W_f$ we have $|\Phi^+_{-w}|=\ell(w)$ and $w(\rho)-\rho = \sum_{\alpha \in \Phi^+_{-w}} w(\alpha)$. Hence, we have
\begin{equation}\label{wrho}
\prod_{\alpha \in \Phi^+_{-w}} -e^{w(\alpha)}=(-1)^{\ell(w)}e^{w(\rho)-\rho}.
\end{equation}

 

We can rewrite $\Theta(\lambda)$ once again as follows
\begin{equation*}
 \begin{array}{rl}
  \Theta(\lambda) =    &\displaystyle \sum_{I\subset \Phi^{\geq 2} } (-q)^{|I|}\sum_{w\in W_f} (-1)^{\ell(w)} e^{w(\lambda-\Sigma_I+\rho) -\rho} \prod_{\alpha\in \Phi^+_w}\left(\sum_{k\geq 0} q^k e^{-kw(\alpha)}\right)\prod_{\alpha\in \Phi^+_{-w}}\left(\sum_{k\geq 0} q^k e^{kw(\alpha)}\right)   \\
  & \\
                    =  & \displaystyle\sum_{I\subset \Phi^{\geq 2} } (-q)^{|I|}\sum_{w\in W_f} (-1)^{\ell(w)} e^{w(\lambda-\Sigma_I+\rho) -\rho} \prod_{\beta\in \Phi^+}\left(\sum_{k\geq 0} q^k e^{-k\beta}\right) \\
                    & \\
                    =   &\displaystyle \sum_{I\subset \Phi^{\geq 2} } (-q)^{|I|}\sum_{w\in W_f} (-1)^{\ell(w)} e^{w(\lambda-\Sigma_I+\rho) -\rho} \sum_{\beta\in Q^+} \kpf_q(\beta)e^{-\beta}.\qedhere
\end{array}   
\end{equation*}

 
 The first equality follows from \cref{wrho} applied to the last term of the right hand side of \cref{theta1}. For the second equality  replace $\beta= w(\alpha)$ and use the decomposition $\Phi^+=w(\Phi^+_w)\ \dot\cup\  (-w(\Phi^+_{-w})). $ The last equation follows directly from equation \cref{kpf}.
 The next Lemma implies that in previous sum only the subsets $I$ such that $\lambda-\Sigma_I$ is regular need to be considered.

 	


 \begin{lemma}
 	Assume $\nu\in X$ is singular, then 
 	\[\sum_{w\in W_f}(-1)^{\ell(w)}e^{w(\nu+\rho)} =0\]
 \end{lemma}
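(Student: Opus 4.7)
The plan is to exhibit a sign-reversing involution on $W_f$ that pairs up terms in the sum. Since $\nu$ is singular, by definition there exists a root $\alpha\in\Phi$ with $\langle \nu+\rho,\alpha^\vee\rangle = 0$, so the reflection $s:=s_\alpha\in W_f$ satisfies $s(\nu+\rho)=\nu+\rho$.

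Next I would partition $W_f$ into the right cosets of $\langle s\rangle$; each such coset has the form $\{w,ws\}$ with $w\neq ws$. The contribution to the sum from such a coset is
\[(-1)^{\ell(w)} e^{w(\nu+\rho)} + (-1)^{\ell(ws)} e^{ws(\nu+\rho)}.\]
Because $s$ fixes $\nu+\rho$, we have $ws(\nu+\rho)=w(\nu+\rho)$, so the two exponentials coincide. It therefore suffices to show $(-1)^{\ell(ws)}=-(-1)^{\ell(w)}$, i.e. that the sign character $\mathrm{sgn}:W_f\to\{\pm 1\}$, $w\mapsto(-1)^{\ell(w)}$, sends the (possibly non-simple) reflection $s$ to $-1$. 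I would appeal to the fact that $\mathrm{sgn}$ agrees with the determinant character $w\mapsto\det(w)$ acting on $X_\bbR$, which is a group homomorphism; since every reflection acts on $X_\bbR$ with determinant $-1$, we obtain $(-1)^{\ell(s)}=-1$, and by multiplicativity $(-1)^{\ell(ws)}=-(-1)^{\ell(w)}$.

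Consequently every coset contribution is zero, so summing over cosets gives $\sum_{w\in W_f}(-1)^{\ell(w)}e^{w(\nu+\rho)}=0$, as required. There is no real obstacle here beyond this one small subtlety: one must be careful that $s$ need not be a simple reflection (in which case the length-parity identity would be immediate), which is why I invoke the determinant description of the sign character.
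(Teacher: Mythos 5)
Your proof is correct. The key observations --- that singularity of $\nu$ yields a (not necessarily simple) reflection $s$ fixing $\nu+\rho$, that $e^{ws(\nu+\rho)}=e^{w(\nu+\rho)}$, and that $(-1)^{\ell(-)}=\det(-)$ is a homomorphism sending every reflection to $-1$ so that $(-1)^{\ell(ws)}=-(-1)^{\ell(w)}$ --- are all valid, and pairing $w$ with $ws$ gives the cancellation.

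Your route differs mildly from the paper's. The paper first factors the sum through the stabilizer $W^{\nu+\rho}$ of $\nu+\rho$, writing
\[
\sum_{w\in W_f}(-1)^{\ell(w)}e^{w(\nu+\rho)}=\Bigl(\sum_{x\in W_{\nu+\rho}}(-1)^{\ell(x)}e^{x(\nu+\rho)}\Bigr)\Bigl(\sum_{y\in W^{\nu+\rho}}(-1)^{\ell(y)}\Bigr),
\]
and then kills the right-hand factor by exactly the same parity-reversing bijection $y\mapsto ys$ with $s$ a reflection in $W^{\nu+\rho}$. So the kernel of both arguments is identical (multiplication by a reflection that fixes $\nu+\rho$ reverses the sign but not the exponential), but you apply the involution directly to all of $W_f$, bypassing the coset decomposition and the factorization altogether. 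That makes your version a bit shorter and avoids having to justify the length-compatibility of the decomposition $W_f\cong W_{\nu+\rho}\times W^{\nu+\rho}$; the paper's version, on the other hand, isolates the group-theoretic statement $\sum_{y\in W^{\nu+\rho}}(-1)^{\ell(y)}=0$ as the essential input, which is perhaps more suggestive of the general principle at work. Both are correct; yours is the more economical.
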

 \begin{proof}
 	Let $W^{\nu+\rho}$ be the stabilizer of $\nu+\rho$ in $W_f$. We denote by $W_{\nu+\rho}$ the set of minimal length representatives in $W_f/W^{\nu+\rho}$. Multiplication induces a length preserving bijection $W_{\nu+\rho}\times W^{\nu+\rho} \cong W_f$. We have
 	\[\sum_{w\in W_f}(-1)^{\ell(w)}e^{w(\nu+\rho)}= \sum_{x\in W_{\nu+\rho}}  (-1)^{\ell(x)}e^{x(\nu+\rho)} \sum_{y\in W^{\nu+\rho}}(-1)^{\ell(y)} \]

 It is enough to show that $\sum_{y\in W^{\nu+\rho}}(-1)^{\ell(y)}=0$. Notice that 
 \[\sum_{y\in W^{\nu+\rho}}(-1)^{\ell(y)}=|\{y \in W^{\nu+\rho}\mid \ell(y)\text{ even}\}|-|\{y \in W^{\nu+\rho}\mid \ell(y)\text{ odd}\}|.\]
 The group $W^{\nu+\rho}$ is a reflection subgroup (it is generated by the reflections in $W_f$ fixing $\nu+\rho$). If $\nu$ is singular then $W^{\nu+\rho}$ is non-trivial and contains a reflection $s$. Multiplication by $s$ induces a bijection between elements of even length and elements of odd length in $W^{\nu+\rho}$.
 \end{proof}

If  $\nu$ is now an arbitrary regular 
weight,  we have \[\sum_{w\in W_f} (-1)^{\ell(w)}e^{w(\nu+\rho)-\rho}=(-1)^{\ell(w_\nu)}\sum_{w\in W_f} (-1)^{\ell(w)}e^{w(\overline{\nu}+\rho)-\rho}\]

So, if $\lambda-\Sigma_I$ is regular, we have
\begin{align}\label{kpf2}
 &\sum_{w\in W_f} (-1)^{\ell(w)} e^{w(\lambda-\Sigma_I+\rho) -\rho} \sum_{\beta\in Q^+}\kpf_q(\beta)e^{-\beta}=\nonumber\\
= \,&(-1)^{\ell(w_{\lambda-\Sigma_I})}\sum_{w\in W_f} (-1)^{\ell(w)} e^{w(\overline{\lambda-\Sigma_I}+\rho) -\rho} \sum_{\beta\in Q^+}\kpf_q(\beta)e^{-\beta}\nonumber\\
= \,&(-1)^{\ell(w_{\lambda-\Sigma_I})}\sum_{\substack{\mu \in X \\ \mu\leq \overline{\lambda-\Sigma_I}}}\left(\sum_{w\in W_f} (-1)^{\ell(w)}  \kpf_q(w(\overline{\lambda-\Sigma_I}+\rho)-\rho-\mu)
\right)e^\mu.
\end{align}

The last equation is obtained by recalling that $\kpf_q(\alpha)$ is defined to be zero if $\alpha\in X\setminus Q^+$ and by noticing that for any $w\in W_f$ we have \[w\cdot \overline{\lambda-\Sigma_I} \leq \overline{\lambda-\Sigma_I}.\]
Finally, we restrict to $\Theta(\lambda)|_{X^+}$. Applying the definition of the Kostka-Foulkes polynomials \cref{KF} we obtain 
\begin{align}\label{exp1}
\Theta(\lambda)|_{X^+} & = \sum_{\substack{I\subset \Phi^{\geq 2} \\ \lambda -\Sigma_I \ \mathrm{regular}}}(-q)^{|I|}(-1)^{\ell(w_{\lambda-\Sigma_I})} \sum_{\mu \in X^+}K_{\overline{\lambda-\Sigma_I},\mu}(q)e^\mu\nonumber\\
& =\sum_{\substack{I\subset \Phi^{\geq 2} \\ \lambda -\Sigma_I \ \mathrm{regular}}}(-q)^{|I|}(-1)^{\ell(w_{\lambda-\Sigma_I})} \sum_{\mu \in X^+}h_{ \mu,\overline{\lambda-\Sigma_I}}(q^{\frac12})e^\mu.
\end{align}

\subsection{The Second Expansion \texorpdfstring{of $\Theta(\lambda)$}{}.}
Let $\Phi^{\geq 2}_{-w}:=\Phi^+_{-w}\setminus \Delta_{-w}$.
We start by rewriting $\Theta(\lambda)$ as
\begin{equation*}
    \begin{array}{rl}
     \Theta(\lambda)  =     & \displaystyle  \sum_{w \in W_f} \frac{e^{w(\lambda)}\prod_{\alpha \in \Phi^+_w\setminus \Delta_w}(1-qe^{-w(\alpha)})\prod_{\alpha \in \Phi^+_{-w}\setminus \Delta_{-w}}(1-qe^{-w(\alpha)})}{\prod_{\alpha \in \Phi^+_w}(1-qe^{-w(\alpha)})\prod_{\alpha \in \Phi^+_{-w}}(q-e^{-w(\alpha)})}  \\
     & \\
        = & \displaystyle  \sum_{w \in W_f} \frac{e^{w(\lambda)}}{\prod_{\alpha \in \Delta_w}(1-qe^{-w(\alpha)})\prod_{\alpha \in \Delta_{-w}}(q-e^{-w(\alpha)})}
\prod_{\alpha \in \Phi^{\geq 2}_{-w}}\frac{1-qe^{-w(\alpha)}}{q-e^{-w(\alpha)}}
    \end{array}
\end{equation*}
For every $\alpha \in \Phi^+_{-w}$ we have \[\frac{1-qe^{-w(\alpha)}}{q-e^{-w(\alpha)}}= \sum_{k\geq 0} p_k(q)e^{kw(\alpha)}\]
where $p_0(q)=q$ and $p_k(q)=q^{k+1}-q^{k-1}$ if $k\geq 1$. Now we can use \cref{-ainverse} and \cref{ainverse} to rewrite:

\begin{equation}\label{theta2}
\Theta(\lambda)=\sum_{w\in W_f}e^{w(\lambda)}\prod_{\alpha\in \Delta_w}\left(\sum_{k\geq 0} q^k e^{-kw(\alpha)}\right)\prod_{\alpha\in \Delta_{-w}}\left(-e^{w(\alpha)}\sum_{k\geq 0} q^k e^{kw(\alpha)}\right)\prod_{\alpha \in \Phi^{\geq 2}_{-w}}\left(\sum_{k\geq 0} p_k(q)e^{kw(\alpha)}\right)
\end{equation}

The terms $e^\mu$ which occur in the sum for a fixed $w\in W_f$ are for $\mu$ of the following form
\[\mu =w(\lambda)- \sum_{\alpha \in \Delta_w}k_\alpha w(\alpha) + \sum_{\beta \in \Delta_{-w}}(k_\beta+1) w(\beta) + \sum_{\gamma \in \Phi^{\geq 2}_{-w}}k_\gamma w(\gamma).\]
with $k_\alpha,k_\beta,k_\gamma \geq 0$.

Let $P^\lambda$ be the set of weights of an irreducible representation of highest weight $\lambda$ of the reductive group $G$ associated to the root datum.
In other words, $P^\lambda$ is the set of weights $\mu\in X$ such that $w(\mu) \leq \lambda$ for every $w\in W_f$.

The following Lemma is an adaptation of  \cite[Lemma 4.2]{Sch}.
\begin{lemma}\label{Xlam}
Let $\lambda\in X^+$. Assume that there is an element $\mu \in P^\lambda$ such that
	\begin{equation}\label{plam2}
	\mu =w(\lambda)- \sum_{\alpha \in \Delta_w}k_\alpha w(\alpha) + \sum_{\beta \in \Delta_{-w}}(k_\beta+1) w(\beta) + \sum_{\gamma \in \Phi^{\geq 2}_{-w}}k_\gamma w(\gamma)\end{equation}
	for some $k_\alpha,k_\beta,k_\gamma \geq 0$ and $w\in W_f$. Then $w=id$.
\end{lemma}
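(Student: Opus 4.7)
The plan is a short argument by contradiction. Suppose for contradiction that $w\neq \mathrm{id}$, and apply $w^{-1}$ to both sides of \cref{plam2} to bring the equation into the untwisted lattice:
\[ w^{-1}(\mu) = \lambda - \sum_{\alpha \in \Delta_w} k_\alpha \alpha + \sum_{\beta \in \Delta_{-w}}(k_\beta+1)\beta + \sum_{\gamma \in \Phi^{\geq 2}_{-w}} k_\gamma \gamma. \]
The only property of $\mu\in P^\lambda$ that I intend to use is the consequence for the particular element $w^{-1}\in W_f$, namely $w^{-1}(\mu)\leq \lambda$. Equivalently $\lambda-w^{-1}(\mu)\in \mathbb{N}\Phi^+$, and since every positive root is itself a non-negative integer combination of simple roots, this amounts to the statement that every coefficient of $\lambda-w^{-1}(\mu)$ in the basis $\Delta$ is a non-negative integer.

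Next I would exhibit a simple root whose coefficient in this expansion is forced to be strictly negative. Because $w\neq \mathrm{id}$, any reduced expression $s_{i_1}\cdots s_{i_k}$ for $w$ furnishes a right descent $i_k$ and hence a witness $\beta_0=\alpha_{i_k}\in \Delta_{-w}$. I would then compute the coefficient of $\beta_0$ on the right-hand side of
\[ \lambda - w^{-1}(\mu) = \sum_{\alpha \in \Delta_w} k_\alpha \alpha - \sum_{\beta \in \Delta_{-w}}(k_\beta+1)\beta - \sum_{\gamma \in \Phi^{\geq 2}_{-w}} k_\gamma \gamma. \]
The first sum contributes $0$ because $\Delta_w$ and $\Delta_{-w}$ are disjoint subsets of $\Delta$; the second sum contributes exactly $-(k_{\beta_0}+1)\leq -1$; and each $\gamma\in\Phi^{\geq 2}_{-w}$ expands with non-negative coefficients in $\Delta$, so the third sum contributes something $\leq 0$. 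Altogether the coefficient of $\beta_0$ is at most $-1$, contradicting the non-negativity derived above. Therefore $\Delta_{-w}=\emptyset$, i.e.\ $w=\mathrm{id}$.

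I do not expect any genuine obstacle here. The only point that must be seen clearly is that the disjointness $\Delta_w\cap \Delta_{-w}=\emptyset$ is what shields the coefficient of $\beta_0$ from any positive contribution, so that no choice of the $k_\alpha$ can rescue \cref{plam2} once $\Delta_{-w}$ is non-empty. Everything else is book-keeping: the set $\Phi^{\geq 2}_{-w}$ plays no role beyond lying in $\mathbb{N}\Delta$, and the hypothesis $\mu\in P^\lambda$ is used only in its weakest form $w^{-1}(\mu)\leq \lambda$.
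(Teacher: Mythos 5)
Your proof is correct and is essentially the same as the paper's: you both use only the weak consequence $w^{-1}(\mu)\leq\lambda$ of $\mu\in P^\lambda$, pick a simple root $\beta_0\in\Delta_{-w}$ when $w\neq\mathrm{id}$, and derive a contradiction by showing the corresponding coefficient must be simultaneously $\geq 0$ (from dominance) and $\leq -1$ (from the explicit decomposition). The only cosmetic difference is that you apply $w^{-1}$ first and read off coefficients in the basis $\Delta$, whereas the paper leaves the identity as is and reads off coefficients in the basis $\{w(\alpha)\}_{\alpha\in\Delta}$ -- the two computations coincide under conjugation by $w$.
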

\begin{proof}
Since $\mu \in P^\lambda$,	we have $\lambda \geq w^{-1}(\mu)$, so
	$\lambda- w^{-1}(\mu) = \sum_{\alpha\in \Delta} n_\alpha \alpha$, for some $n_\alpha \in \bbZ_{\geq 0}$. 
	Applying $w$ we obtain
	\begin{equation}\label{plam1} w(\lambda) -\mu = \sum_{\alpha \in \Delta} n_\alpha w(\alpha).
	\end{equation}


	Now assume by contradiction that $w\neq id$.  Then $\Delta_{-w}\neq \emptyset$ and we choose $\beta \in \Delta_{-w}$. When we write $w(\lambda)-\mu$ in the basis $\{w(\alpha)\}_{\alpha \in \Delta}$ of $X_\bbR$, by \cref{plam1} the coefficient of $w(\beta)$ is $n_\beta \geq 0$.

	On the other hand, using \cref{plam2} we see that the coefficient of $w(\beta)$ in $w(\lambda)-\mu$ is  
	\[-k_\beta-1 -\sum_{\gamma \in \Phi^{\geq 2}_{-w}}k_\gamma [\gamma]_\beta<0,\] where $[\gamma]_\beta$ denotes the coefficient of $\beta$ when $\gamma$  is written in the basis of simple roots (clearly $[\gamma]_\beta\geq 0$). We get a contradiction, hence $w=id$.
\end{proof}

We restrict ourselves to consider $\Theta(\lambda)|_{X^+}$. Thanks to 
\cref{kpf2} we know that all the weights $e^\mu$ occurring in $\Theta(\lambda)|_{X^+}$ satisfy $\mu\leq \overline{\lambda-\Sigma_I}$ for some $I\subset \Pgeq{2}$ and therefore, by \Cref{lambda-I}, they satisfy $\mu\leq \lambda$. Since $\mu\in X^+$ we also have $\mu\in P^\lambda$.

Finally, from \Cref{Xlam}
it follows that 
only the term for $w=id$ contribute in \cref{theta2} to
the coefficient of $e^\mu$ in $\Theta(\lambda)$, for $\mu \in P^\lambda$. Notice that for $w = id$ we have $\Delta_{-w}=\Pgeq{2}_{-w}=\emptyset$. It follows that 

\begin{equation}\label{exp2}
\Theta(\lambda)|_{X^+}=\left. \left(e^\lambda \prod_{\alpha\in \Delta}\left(\sum_{k\geq 0} q^k e^{-k\alpha}\right)\right) \right|_{X^+}=\sum_{\substack{\mu\in X^+ \\ \mu\leq \lambda\  }}q^{\hgt(\lambda-\mu)}e^\mu.
\end{equation}

\subsection{The First Pre-Canonical basis.}

We can employ similar techniques to those of \Cref{antiatomicsec} to show that the first pre-canonical basis coincides with the standard basis, as mentioned in the first part of \Cref{12}. As pointed out in \Cref{satake2}, this can be deduced via the Satake transform, using the definition of the Hall-Littlewood polynomials. Here we give a proof which does not pass through Satake.
\begin{theorem}\label{std=n1}
    For every $\lambda \in X^+$ we have $\bfN^1_\lambda =\bfH_\lambda$.
\end{theorem}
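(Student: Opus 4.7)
My approach would mimic the proof of \Cref{antiatomic}, introducing the rational function
\[\Theta^1(\lambda) := \sum_{w \in W_f}\frac{e^{w(\lambda)} \prod_{\alpha\in \Phi^+}(1-qe^{-w(\alpha)})}{\prod_{\alpha \in \Phi^+_w}(1-qe^{-w(\alpha)})\prod_{\alpha \in \Phi^+_{-w}}(q-e^{-w(\alpha)})} \in \frX,\]
which is identical to $\Theta(\lambda)$ except that the numerator product now ranges over all of $\Phi^+$ rather than just $\Pgeq{2}$. The derivation in \Cref{SecExp1} never used the specific choice of indexing set, so replacing $\Pgeq{2}$ by $\Phi^+$ throughout yields the first expansion
\[\Theta^1(\lambda)|_{X^+} = \sum_{\substack{I\subset \Phi^+ \\ \lambda - \Sigma_I \text{ regular}}}(-q)^{|I|}(-1)^{\ell(w_{\lambda-\Sigma_I})}\sum_{\mu\in X^+} h_{\mu,\overline{\lambda-\Sigma_I}}(q^{1/2})\, e^\mu.\]

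For the second expansion, the enlarged numerator cancels \emph{all} of the $\Phi^+_w$ factors in the denominator, giving
\[\Theta^1(\lambda) = \sum_{w \in W_f} e^{w(\lambda)} \prod_{\alpha \in \Phi^+_{-w}} \frac{1-qe^{-w(\alpha)}}{q-e^{-w(\alpha)}} = \sum_{w\in W_f} e^{w(\lambda)} \prod_{\alpha \in \Phi^+_{-w}}\left(\sum_{k\geq 0}p_k(q)\, e^{kw(\alpha)}\right),\]
with $p_k(q)$ defined as in \cref{theta2}. A monomial $e^\mu$ produced by a choice $(w,(k_\alpha)_{\alpha\in\Phi^+_{-w}})$ with $k_\alpha\geq 0$ has exponent $\mu = w(\lambda) + \sum_\alpha k_\alpha w(\alpha)$. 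Any such $\mu$ surviving in $\Theta^1(\lambda)|_{X^+}$ lies in $X^+$ and, by the first expansion combined with \Cref{lambda-I}, satisfies $\mu \leq \lambda$; hence $\mu \in P^\lambda$. Applying $w^{-1}$ yields $w^{-1}(\mu) = \lambda + \sum_\alpha k_\alpha \alpha \geq \lambda$, while $\mu \in P^\lambda$ forces $w^{-1}(\mu) \leq \lambda$. Therefore $\sum_\alpha k_\alpha\alpha = 0$, all $k_\alpha$ vanish, and $\mu = w(\lambda)$; since the closed dominant chamber is a fundamental domain for the $W_f$-action on $X_\bbR$, the condition $w(\lambda) \in X^+$ forces $w\in W^\lambda$ and $\mu = \lambda$. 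Summing the surviving contributions $q^{|\Phi^+_{-w}|} e^\lambda = q^{\ell(w)} e^\lambda$ over $w \in W^\lambda$ gives $\Theta^1(\lambda)|_{X^+} = \pi_{W^\lambda}(q)\, e^\lambda$.

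Comparing the coefficient of $e^\mu$ in the two expansions yields, for every $\mu \in X^+$ with $\mu \leq \lambda$,
\[\pi_{W^\lambda}(q)\, \delta_{\mu,\lambda} = \sum_{\substack{I\subset \Phi^+ \\ \lambda - \Sigma_I \text{ regular}}}(-q)^{|I|}(-1)^{\ell(w_{\lambda-\Sigma_I})}\, h_{\mu,\overline{\lambda-\Sigma_I}}(q^{1/2}).\]
Expanding $\pi_{W^\lambda}(v^2)\,\bfN^1_\lambda$ in the standard basis using \Cref{First} and \Cref{defi H tilde bar}, the coefficient of $\bfH_\mu$ is precisely the right-hand side with $q$ replaced by $v^2$. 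Hence $\pi_{W^\lambda}(v^2)\,\bfN^1_\lambda = \pi_{W^\lambda}(v^2)\,\bfH_\lambda$, and we conclude $\bfN^1_\lambda = \bfH_\lambda$. The main obstacle is the step in the second expansion where I force $w\in W^\lambda$ with all $k_\alpha = 0$; this plays the role of \Cref{Xlam}, but is cleaner here because there is no distinguished subset of simple roots to isolate, so the sign-juggling needed in the proof of \Cref{Xlam} is avoided entirely.
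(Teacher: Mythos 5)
Your proof is correct and follows the same overall strategy as the paper's: you introduce the same auxiliary element (the paper calls it $\Theta_1(\lambda)$), derive the same first expansion by the argument of \Cref{SecExp1}, derive the same second expansion by cancelling the $\Phi^+_w$-indexed denominator factors, and compare coefficients of $e^\mu$. The one genuine point of divergence is the key vanishing step that the paper isolates as \Cref{Xlam2}. The paper proves that lemma by re-running the coordinate/sign argument from \Cref{Xlam}: for $w\neq \mathrm{id}$ with some $k_\gamma>0$, it picks $\beta\in\Delta_{-w}$ with $[\gamma]_\beta>0$ and reads the coefficient of $w(\beta)$ in $w(\lambda)-\mu$, getting a sign contradiction. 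You instead apply $w^{-1}$ to the whole identity and observe that $w^{-1}(\mu)=\lambda+\sum k_\alpha\alpha\geq\lambda$ in dominance, while $\mu\in P^\lambda$ forces $w^{-1}(\mu)\leq\lambda$, whence $\sum k_\alpha\alpha=0$. This is cleaner and does not require singling out a simple root $\beta$, precisely because here every term in the sum carries a non-negative coefficient; as you note, the same shortcut would not work for \Cref{Xlam} itself, where the $\Delta_w$-indexed terms carry the opposite sign. Both arguments are valid, and yours streamlines the part specific to $\bfN^1$.
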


We start by considering the following element of $\frX$. 
\[\Theta_1(\lambda)=\sum_{w \in W_f} \frac{e^{w(\lambda)}\prod_{\alpha \in \Phi^+}(1-qe^{-w(\alpha)})}{\prod_{\alpha \in \Phi^+_w}(1-qe^{-w(\alpha)})\prod_{\alpha \in \Phi^+_{-w}}(q-e^{-w(\alpha)})}.\]

We have
\[\prod_{\alpha \in \Phi^+}(1-qe^{-w(\alpha)})=\sum_{I\subset \Phi^+}(-q)^{|I|}e^{-w(\Sigma_I)}\]
The functions  $\Theta_1(\lambda)$ and $\Theta(\lambda)$ share the same denominator, so working as in  \Cref{SecExp1}
we obtain

\begin{equation}\label{Xi1}
    \Theta_1(\lambda)|_{X^+}=\sum_{\substack{I\subset \Phi^{+} \\ \lambda -\Sigma_I \ \mathrm{regular}}}(-q)^{|I|}(-1)^{\ell(w_{\lambda-\Sigma_I})} \sum_{\mu \in X^+}h_{\mu,\overline{\lambda-\Sigma_I}}(q^{\frac12})e^\mu.
\end{equation}

On the other hand, we have
\[ \Theta_1(\lambda) = \sum_{w \in W_f} e^{w(\lambda)}\prod_{\alpha \in \Phi^+_{-w}}\frac{1-qe^{-w(\alpha)}}{q-e^{-w(\alpha)}}=\sum_{w \in W_f} e^{w(\lambda)}\prod_{\alpha \in \Phi^+_{-w}}\left(\sum_{k\geq 0} p_k(q)e^{kw(\alpha)}\right).\]

For a fixed $w\in W_f$, the terms $e^\mu$ which occur in the sum  are for $\mu$ of the form
\[\mu = w(\lambda)+ \sum_{\alpha \in \Phi^+_{-w}}k_\alpha w(\alpha)  \ \ \ \mathrm{with}\ k_\alpha\in \bbZ_{>0}.\]
We need a slight modification of \Cref{Xlam}.

\begin{lemma}\label{Xlam2}
Let $\lambda\in X^+$. Assume there exists $\mu \in P^\lambda \cap X^+$ such that
	\begin{equation*}
	\mu =w(\lambda)+\sum_{\alpha \in \Phi^+_{-w}}k_\alpha w(\alpha)\end{equation*}
	for some $k_\alpha \geq 0$ and $w\in W_f$. Then $\mu=w(\lambda)=\lambda$ and $k_\alpha=0$ for all $\alpha\in \Phi^+_{-w}$.
\end{lemma}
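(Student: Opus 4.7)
The plan is to mirror the structure of the proof of Lemma \ref{Xlam}. Since $\mu \in P^\lambda \cap X^+$ and $\mu$ is dominant, we have $\mu \leq \lambda$ in the dominance order, so $\lambda - w^{-1}(\mu) = \sum_{\alpha \in \Delta} n_\alpha \alpha$ with $n_\alpha \geq 0$. Applying $w$ yields one expression for $w(\lambda) - \mu$ in the basis $\{w(\alpha)\}_{\alpha \in \Delta}$ of $X_\bbR$, while the hypothesis gives the second expression $w(\lambda) - \mu = -\sum_{\alpha \in \Phi^+_{-w}} k_\alpha w(\alpha)$.

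If $w = \mathrm{id}$, then $\Phi^+_{-w} = \emptyset$ and the statement is immediate, so I would assume $w \neq \mathrm{id}$ and pick any $\beta \in \Delta_{-w}$. Comparing the coefficient of $w(\beta)$ in the two expressions above, the first gives $n_\beta \geq 0$, while the second gives $-k_\beta - \sum_{\gamma \in \Phi^+_{-w}\setminus\{\beta\}} k_\gamma [\gamma]_\beta \leq 0$, where $[\gamma]_\beta \geq 0$ denotes the coefficient of $\beta$ when $\gamma \in \Phi^+$ is expanded in $\Delta$. Both sides must therefore vanish, giving $k_\beta = 0$ for every $\beta \in \Delta_{-w}$ and, moreover, $k_\gamma [\gamma]_\beta = 0$ for every $\gamma \in \Phi^+_{-w}\setminus\{\beta\}$ and $\beta \in \Delta_{-w}$.

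The crucial new step, compared with Lemma \ref{Xlam} where the analogous inequality was strict, is to deduce from this that \emph{all} $k_\gamma$ vanish. Suppose for contradiction that $k_\gamma > 0$ for some $\gamma \in \Phi^+_{-w}$. Then $\gamma \notin \Delta_{-w}$ (otherwise $k_\gamma = 0$), and the vanishing conditions force $[\gamma]_\beta = 0$ for every $\beta \in \Delta_{-w}$. Thus $\gamma$ is a nonzero nonnegative integer combination of $\Delta_w$, and hence $w(\gamma)$ is a nonzero nonnegative integer combination of the positive roots $\{w(\beta) : \beta \in \Delta_w\} \subset \Phi^+$, so $w(\gamma) \in \Phi^+$, contradicting $\gamma \in \Phi^+_{-w}$. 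Therefore $k_\gamma = 0$ for all $\gamma \in \Phi^+_{-w}$, whence $\mu = w(\lambda)$; since $\mu \in X^+$ and every $W_f$-orbit contains a unique dominant weight, we conclude $\mu = w(\lambda) = \lambda$.

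The main (modest) obstacle I expect is precisely bridging the loss of strict inequality from Lemma \ref{Xlam}: there the presence of the $+1$ in $(k_\beta+1)$ produced a contradiction immediately, whereas here one only obtains equalities and must exploit the combinatorics of $\Delta_w$ versus $\Delta_{-w}$ to propagate the vanishing from simple roots in $\Delta_{-w}$ to all of $\Phi^+_{-w}$. This is precisely the role of the final combinatorial argument above.
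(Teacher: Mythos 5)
Your proof is correct and follows essentially the same route as the paper's: both expand $w(\lambda)-\mu$ in the basis $\{w(\alpha)\}_{\alpha\in\Delta}$ in two ways and both rely on the key observation that any $\gamma\in\Phi^+_{-w}$ must have a strictly positive coefficient on some $\beta\in\Delta_{-w}$ (else $w(\gamma)\in\Phi^+$). The only difference is ordering: the paper picks a $\gamma$ with $k_\gamma>0$, chooses a suitable $\beta$ with $[\gamma]_\beta>0$, and reads off an immediate sign contradiction, whereas you first extract all the vanishing equalities $k_\beta=0$ and $k_\gamma[\gamma]_\beta=0$ and then invoke the same observation to reach a contradiction — so your remark about the "loss of strict inequality" being a new obstacle slightly overstates matters, since the paper recovers a strict inequality directly from the choice $[\gamma]_\beta>0$.
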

\begin{proof}
Let $\mu$ be as in the statement. If $w=id$ then $\Phi^+_{-w}=\emptyset$, and the lemma follows. Assume now that $w\neq id$ and that $k_\gamma>0$ for some $\gamma\in \Phi^+_{-w}$. Then there exists $\beta\in \Delta_{-w}$ such that the coefficient $[\gamma]_\beta$ is strictly positive (otherwise  $\gamma \in \sum_{\alpha\in {\Delta_w}} \bbZ_{\geq 0}\alpha$  and $w(\gamma)\in \Phi^+$). Now, as in the proof of \Cref{Xlam}, by looking  at the coefficient of $w(\beta)$ in $w(\lambda)-\mu$ (again, in the basis $\{w(\alpha)\}_{\alpha \in \Delta}$ of $X_\bbR$), we deduce that it must be non-negative and negative, so we obtain a contradiction. It follows that  $k_\alpha=0$ for all $\alpha\in \Phi^+_{-w}$. Hence, we have $\mu = w(\lambda)\in X^+$. But $\lambda$ is the only dominant weight in its $W_f$-orbit, so $w(\lambda)=\lambda$. 
\end{proof}

Let $W^\lambda$ be the stabilizer of $\lambda$ in $W_f$.
We want to compute the coefficient of $e^\mu$ in $\Theta_1(\lambda)$ for $\mu\in X^+$. 
By \cref{Xi1} and \Cref{lambda-I} we know  that it is non-zero only if $\mu \leq \lambda$. Notice that $P^\lambda\cap X^+= \{\mu \in X^+ \mid \mu \leq \lambda\}$, so, by \Cref{Xlam2} we deduce that the coefficient of $e^\mu$ in $\Theta_1(\lambda)$ is non-zero only if $\mu=\lambda$.
Recall that $p_0(q)=q$. We conclude that 
\begin{equation}\label{Xi2} \Theta_1(\lambda)|_{X^+} = \sum_{w\in W^\lambda}q^{\ell(w)} e^\lambda =\pi_{W^\lambda}(q) e^\lambda.
\end{equation}

\begin{proof}[Proof of \Cref{std=n1}]
Comparing the coefficient of $e^\mu$ in \cref{Xi1} and \cref{Xi2} we see that for any $\mu \in X^+$ we have
\begin{equation}\label{mumu}
\delta_{\mu,\lambda}\pi_{W^\lambda}(q)=\sum_{I\subset \Phi^{+}}(-q)^{|I|}(-1)^{\ell(w_{\lambda-\Sigma_I})} \sum_{\mu \in X^+}h_{\mu,\overline{\lambda-\Sigma_I}}(q^{\frac12}).
\end{equation}
Then we conclude in a similar vein as in the proof of \Cref{antiatomic} (just after \cref{mucoeff}), i.e. the left hand side of \cref{mumu} (divided by $\pi_{W^\lambda}(q)$) gives the coefficient of $\bfH_\mu$ in $\bfH_\lambda$ and the right hand side (divided by $\pi_{W^\lambda}(q)$) gives the coefficient of $\bfH_\mu$ in $\bfN^1_\lambda$.
\end{proof}
The following corollary is the second part of \Cref{12} and it is an easy consequence of \Cref{antiatomic} and \Cref{std=n1}.

\begin{cor}\label{coro lowest decomposition}
For $\lambda \in X^+$ we have the equation
\[\bfN_{\lambda}^2   = \displaystyle \sum_{\substack{\mu \in X^+ \\ \mu \leq \lambda \ } }  q^{\hgt(\lambda-\mu)} \bfN_{\mu}^1.\]
\end{cor}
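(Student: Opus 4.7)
The plan is to observe that this corollary is a direct combination of the two preceding results, \Cref{antiatomic} and \Cref{std=n1}, together with the conventions $q = v^2$ and the definition \cref{Nbasis} of the basis $\{\bfN_\mu\}$.

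First, I would invoke \Cref{antiatomic}, which identifies the second pre-canonical basis with the $\bfN$-basis introduced in \cref{Nbasis}, namely
\[
\bfN^2_\lambda \;=\; \bfN_\lambda \;=\; \sum_{\substack{\mu \in X^+ \\ \mu \leq \lambda}} v^{2\hgt(\lambda-\mu)}\, \bfH_\mu.
\]
Next I would invoke \Cref{std=n1}, which identifies the first pre-canonical basis element $\bfN^1_\mu$ with the standard basis element $\bfH_\mu$ for every $\mu \in X^+$. Substituting $\bfH_\mu = \bfN^1_\mu$ into the display above and recalling the convention $q = v^2$ yields exactly
\[
\bfN^2_\lambda \;=\; \sum_{\substack{\mu \in X^+ \\ \mu \leq \lambda}} q^{\hgt(\lambda-\mu)}\, \bfN^1_\mu,
\]
which is the desired identity.

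There is no genuine obstacle here: the substance of the corollary is carried entirely by the two theorems it cites, each of which was proved via the same $\Theta$-function expansion technique in the previous subsections. The corollary simply records the chain of identifications and is therefore literally a two-line proof once \Cref{antiatomic} and \Cref{std=n1} are in place.
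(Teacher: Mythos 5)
Your proof is correct and matches the paper's intent exactly: the paper itself only remarks that the corollary ``is an easy consequence of \Cref{antiatomic} and \Cref{std=n1},'' and you have simply spelled out the two substitutions (and the $q=v^2$ convention) that make this an identity.
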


\begin{remark}
It seems natural to define for every $i\geq 1$ the rational function 
\[ \Theta_i(\lambda):=\frac{e^{w(\lambda)}\prod_{\alpha \in \Phi^{\geq i}}(1-qe^{-w(\alpha)})}{\prod_{\alpha \in \Phi^+_w}(1-qe^{-w(\alpha)})\prod_{\alpha \in \Phi^+_{-w}}(q-e^{-w(\alpha)})},\]
so that $\Theta(\lambda)=\Theta_2(\lambda)$
However, although the first expansion easily generalizes to every $i\geq 1$, and we have 
\[ \Theta_i(\lambda)|_{X^+} = \sum_{\substack{I\subset \Phi^{\geq i} \\ \lambda -\Sigma_I \ \mathrm{regular}}}(-q)^{|I|}(-1)^{\ell(w_{\lambda-\Sigma_I})} \sum_{\mu \in X^+}h_{ \mu,\overline{\lambda-\Sigma_I}}(q^{\frac12})e^\mu,\]
we do not have any interpretation for the second expansion when $i>2$.
\end{remark}


\section{Upper half decompositions in type A }\label{upp}

In this section we provide a closed formula for the decompositions of $\mathbf{N}_{\lambda}^{i+1}$  in terms of $\mathbf{N}_{\lambda}^{i}$ for all $n/2+1 \leq i \leq n $ in type $A_n$, thus proving \Cref{nhalf}. For the rest of this section we fix $n$ and  $i$.

\begin{definition}\label{M}
Let us define, for $A \subset \Phi$ and $\mu \in X$ the element
\[\mathbf{M}_\mu^A:= \sum_{I \subset A} (-q)^{|I|}\HTIL_{\mu-\Sigma_I}\in \mathcal{H}.\]
\end{definition}

\begin{lemma} \label{lemma preserve singularity and regularity}
 For $\lambda \in X $ and $s\in S_f$ a simple reflection of  $W_f$ we have the equation 
 \begin{equation*}  \label{eq action by s}
  \HTIL_{\lambda}=-\HTIL_{s\cdot \lambda}.	
 \end{equation*}
\end{lemma}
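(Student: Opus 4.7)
The plan is to verify the identity separately on regular and singular weights, matching the two branches of the definition of $\HTIL_\lambda$. First, I would note that \emph{singularity is preserved by the dot action of $s$}: if a reflection $r \in W_f$ fixes $\lambda + \rho$, then the reflection $srs \in W_f$ fixes $s(\lambda + \rho) = (s\cdot \lambda) + \rho$, so $s \cdot \lambda$ is singular whenever $\lambda$ is (and conversely, by applying the same observation to $s\cdot\lambda$ and using $s^2=1$). In the singular case both sides of the asserted identity vanish by definition, so there is nothing more to check.

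For the regular case I would exploit that $\lambda$ and $s \cdot \lambda$ lie in a common $W_f$-orbit under the dot action, hence share the unique dominant representative: $\overline{s\cdot \lambda} = \overline{\lambda}$. The crux is to compute $w_{s\cdot \lambda}$. From
\[w_{s\cdot \lambda} \cdot (s \cdot \lambda) \;=\; \overline{\lambda} \;=\; w_\lambda \cdot \lambda\]
one gets $(w_{s\cdot \lambda}s)\cdot \lambda = w_\lambda \cdot \lambda$, and the freeness of the dot action on regular weights forces $w_{s\cdot \lambda} = w_\lambda s$. (The freeness uses the standard fact that the isotropy subgroup of a point in a real reflection group is generated by the reflections fixing it: if $\lambda+\rho$ had any non-trivial stabiliser in $W_f$, it would be stabilised by some reflection, contradicting regularity.) Since $s$ is a simple reflection, $\ell(w_\lambda s) = \ell(w_\lambda) \pm 1$, so $(-1)^{\ell(w_{s \cdot \lambda})} = -(-1)^{\ell(w_\lambda)}$. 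Substituting into the definition gives
\[\HTIL_{s\cdot \lambda} \;=\; (-1)^{\ell(w_{s\cdot \lambda})}\,\undH_{\overline{s\cdot \lambda}} \;=\; -(-1)^{\ell(w_\lambda)}\,\undH_{\overline{\lambda}} \;=\; -\HTIL_\lambda,\]
which is the desired identity.

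The main obstacle, modest as it is, is the verification that the $W_f$-dot action is free on regular weights; once that is granted, the identification $w_{s\cdot \lambda}=w_\lambda s$ and the resulting sign flip are immediate from the Coxeter-theoretic fact $\ell(ws)\neq \ell(w)$ for a simple reflection $s$. Notably, no Hecke-algebra input beyond the definition of $\HTIL$ enters the argument.
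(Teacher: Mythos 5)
Your proof is correct and takes essentially the same route as the paper: split into the singular and regular cases, establish $w_{s\cdot\lambda}=w_\lambda s$ from the uniqueness of the dominance-restoring element, and then read off the sign change from $\ell(w_\lambda s)=\ell(w_\lambda)\pm 1$. The only difference is cosmetic: you spell out the preservation of singularity and the freeness of the dot action on regular weights, both of which the paper leaves implicit (uniqueness is already built into its definition of $w_\lambda$).
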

\begin{proof}
If $\lambda$ is singular, then $s\cdot \lambda$ is also singular. Then,  by definition, both $\HTIL_{\lambda}$ and $\HTIL_{s\cdot \lambda}$ vanish  and the lemma follows.\\
Assume now that $\lambda$ is regular. Then $s\cdot \lambda$ is also regular. We have $w_{s\cdot \lambda}=w_\lambda s$ since 
$ (w_{\lambda}s)\cdot (s\cdot \lambda) = w_\lambda\cdot \lambda\in X^+$.
Hence, $\ell(w_{s\cdot \lambda})=\ell(w_{\lambda})\pm 1$ and we conclude by using the definition of $\HTIL_{\lambda}$.
\end{proof}

We introduce two notations. 

\noindent
Let $\mu_1,\mu_2,\ldots , \mu_{n}$ be
the coordinates of $\mu \in X$ when expressed in the basis of fundamental weights, i.e. we have $\mu = \sum \mu_j \varpi_j$. It is not hard to prove  that $s_j\cdot \mu = \mu$ if and only if $\mu_j=-1$. 

\noindent
For $1\leq j \leq k\leq n$ define $\alpha_{j,k}:=\alpha_j+\alpha_{j+1}+\cdots +\alpha_{k}$ (all positive roots are of this form in type $A_n$). Notice that when written in the basis of fundamental weights, $\alpha_{j,k}$ has a $1$ in positions $j$ and $k$, a $-1$ in positions $j-1$ (if $1\leq j-1$) and $k+1$ (if $k+1\leq n$) and $0$ elsewhere.

\begin{prop}  \label{lemma X equal to zero} 
	Let $A \subset \Phi$  and $\mu \in X$. We have the equality  \[\mathbf{M}_{\mu }^A = - \mathbf{M}_{s_k \cdot \mu}^{s_k(A)},\] for all $1\leq k \leq n$.  In particular, if $A=s_k(A)$ and  $\mu_k=-1$, we have $\mathbf{M}_{\mu }^A =0$.
\end{prop}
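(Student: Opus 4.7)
The plan is to prove the identity by directly manipulating the defining sum of $\mathbf{M}_{s_k\cdot\mu}^{s_k(A)}$ and applying the preceding Lemma \ref{lemma preserve singularity and regularity} term-by-term.

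First, I would expand
\[\mathbf{M}_{s_k\cdot \mu}^{s_k(A)} = \sum_{J\subset s_k(A)} (-q)^{|J|}\,\HTIL_{s_k\cdot\mu - \Sigma_J}\]
and reindex via the bijection $I \mapsto J:=s_k(I)$ between subsets of $A$ and subsets of $s_k(A)$. This bijection preserves cardinality, and $\Sigma_{s_k(I)} = s_k(\Sigma_I)$ by linearity.

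Next, I would use the compatibility of the dot action with linear translation: since $s_k\cdot\nu = s_k(\nu+\rho)-\rho$, a short computation gives
\[s_k\cdot \mu - s_k(\Sigma_I) = s_k(\mu+\rho) - \rho - s_k(\Sigma_I) = s_k(\mu-\Sigma_I+\rho)-\rho = s_k\cdot(\mu-\Sigma_I).\]
Applying Lemma \ref{lemma preserve singularity and regularity} then yields $\HTIL_{s_k\cdot(\mu-\Sigma_I)} = -\HTIL_{\mu-\Sigma_I}$ for every $I$, so pulling the overall sign out of the sum gives exactly $-\mathbf{M}_\mu^A$, which is the first claim.

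For the ``In particular'' clause, I would recall that $s_k$ acts on a weight $\mu = \sum_j \mu_j\varpi_j$ by the dot action as $s_k\cdot\mu = \mu -(\mu_k+1)\alpha_k$, so the condition $\mu_k = -1$ is equivalent to $s_k\cdot\mu = \mu$. Combined with the hypothesis $s_k(A)=A$, the first part of the proposition becomes $\mathbf{M}_\mu^A = -\mathbf{M}_\mu^A$ inside the $\bbZ[v,v^{-1}]$-algebra $\calH$, forcing $\mathbf{M}_\mu^A = 0$. I do not expect any real obstacle: the entire argument is a reindexing together with a single application of the preceding lemma, so the only thing to be careful with is the clean identification $s_k\cdot\mu - s_k(\Sigma_I) = s_k\cdot(\mu-\Sigma_I)$ and the check that $s_k\cdot\mu=\mu \iff \mu_k=-1$.
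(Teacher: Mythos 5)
Your proof is correct and follows exactly the approach the paper intends: the paper dispatches this proposition in one line as a "direct consequence" of Lemma \ref{lemma preserve singularity and regularity} and the definition of $\bfM_{\mu}^A$, and your argument simply spells out that reindexing and the key identity $s_k\cdot\mu - s_k(\Sigma_I) = s_k\cdot(\mu-\Sigma_I)$. The characterization $\mu_k=-1 \iff s_k\cdot\mu=\mu$ is also stated in the paper (just above the proposition), so the ``in particular'' clause is handled as the authors intended.
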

\begin{proof}
The proposition is a direct consequence of \Cref{lemma preserve singularity and regularity} and the definition of the element $\bfM_{\mu}^A$.
\end{proof}

 For $1\leq j \leq n-i+1$  we set  $\gamma_j:=\alpha_{j,j+i-1}$. 
We define $\Gamma_{j}:= \Phi^{> i} \cup \{ \gamma_1,  \gamma_2 , \ldots , \gamma_j  \} $ and  $\Gamma_0 := \Phi^{> i}$.   To shorten notation we define
\begin{equation*}
	\mathbf{M}_{\lambda }^j : =  \mathbf{M}_{\lambda }^{\Gamma_{j}}.
\end{equation*}

\begin{lemma}  \label{lema descomposicion M facil}
	Let $\lambda  \in X^+$. For $j\geq 1$, if  $\lambda_{j+i-1}= 0$ then
\begin{equation} \label{eq lemma one}
\mathbf{M}_{\lambda }^j =\mathbf{M}_{\lambda }^{j-1 }.	
\end{equation}
\end{lemma}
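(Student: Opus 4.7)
The plan is to exploit the recursive structure $\Gamma_j = \Gamma_{j-1} \sqcup \{\gamma_j\}$. Partitioning the sum defining $\mathbf{M}_\lambda^j$ according to whether $\gamma_j$ belongs to $I$ or not, one obtains at once
\[
\mathbf{M}_\lambda^j \;=\; \mathbf{M}_\lambda^{j-1} \;-\; q\,\mathbf{M}_{\lambda - \gamma_j}^{j-1},
\]
so \cref{eq lemma one} reduces to showing $\mathbf{M}_{\lambda - \gamma_j}^{j-1} = 0$ under the hypothesis $\lambda_{j+i-1} = 0$. For this I would apply \Cref{lemma X equal to zero} with $\mu = \lambda - \gamma_j$ and $k = j+i-1$. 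The coordinate check is immediate: since $\gamma_j = \alpha_{j, j+i-1}$ has, in the fundamental weight basis, a $+1$ at position $j+i-1$ (using $i \geq 2$, which holds throughout \Cref{upp}), the hypothesis $\lambda_{j+i-1} = 0$ gives $(\lambda - \gamma_j)_{j+i-1} = -1$.

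The substantive step is to check $s_k(\Gamma_{j-1}) = \Gamma_{j-1}$ for $k = j + i - 1$, and it is here that the standing assumption $i \geq n/2 + 1$ is decisive. Since $s_k$ is an involution, it suffices to establish $s_k(\Gamma_{j-1}) \subseteq \Gamma_{j-1}$. In type $A_n$, the simple reflection $s_k$ moves a positive root $\alpha_{p,q}$ nontrivially only when $p \in \{k, k+1\}$ or $q \in \{k-1, k\}$, changing the height by at most one. The crucial observation is that $i \geq n/2 + 1$ forces $p + i > n$ for $p \in \{k, k+1\}$, so no root in $\Phi^{>i}$ starts at position $k$ or $k+1$. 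It remains to examine roots with $q \in \{k-1, k\}$ together with the elements $\gamma_1, \dots, \gamma_{j-1}$: a direct verification shows that $s_k$ fixes every $\gamma_l$ for $l \leq j-2$, while $\gamma_{j-1}$ is exchanged with $\alpha_{j-1, j+i-1}$, a root of height $i+1$ already in $\Phi^{>i}$; and the remaining roots $\alpha_{p, k-1}$ and $\alpha_{p, k}$ in $\Phi^{>i}$ are sent to roots whose height is still $> i$.

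Granting this invariance, \Cref{lemma X equal to zero} yields $\mathbf{M}_{\lambda - \gamma_j}^{j-1} = 0$, whence $\mathbf{M}_\lambda^j = \mathbf{M}_\lambda^{j-1}$, as claimed. The main obstacle of the argument is the case analysis establishing $s_{j+i-1}$-invariance of $\Gamma_{j-1}$; this is exactly where the upper half hypothesis $i \geq n/2 + 1$ is essential, for otherwise a root of height $i+1$ could begin at position $k$ or $k+1$ and its image under $s_k$ would have height $i$ but fail to lie in $\{\gamma_1, \dots, \gamma_{j-1}\}$, breaking the invariance.
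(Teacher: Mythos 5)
Your proof is correct and follows essentially the same route as the paper: split $\Gamma_j=\Gamma_{j-1}\sqcup\{\gamma_j\}$ to get $\mathbf{M}_\lambda^j=\mathbf{M}_\lambda^{j-1}-q\,\mathbf{M}_{\lambda-\gamma_j}^{j-1}$, note $(\lambda-\gamma_j)_{j+i-1}=-1$, and invoke \Cref{lemma X equal to zero} after checking $s_{j+i-1}(\Gamma_{j-1})=\Gamma_{j-1}$. The only difference is that the paper asserts the $s_{j+i-1}$-invariance of $\Gamma_{j-1}$ without proof, whereas you carry out the (correct) verification and make explicit why $i\geq n/2+1$ is needed, which is a useful addition rather than a deviation.
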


\begin{proof}
	The disjoint union $\Gamma_j=\Gamma_{j-1}\cup \{ \gamma_j \}$ give us
\begin{equation}  \label{eq proof lemma one}
	\mathbf{M}_{\lambda }^j  =   \mathbf{M}_{\lambda }^{j-1}  - q  \bfM_{\lambda - \gamma_j}^{{j-1}}. 
\end{equation}	
On the other hand, we have $s_{j+i-1}(\Gamma_{j-1}) = \Gamma_{j-1}$ for all  $1\leq j \leq n-i +1$. As by  hypothesis $\lambda_{j+i-1}= 0$, we can use  \Cref{lemma X equal to zero}  to conclude that $\bfM_{\lambda - \gamma_j}^{{j-1}}=0$.
Therefore, \cref{eq proof lemma one} reduces to \cref{eq lemma one} and the Lemma is proved. 
\end{proof}

 \begin{lemma}  \label{lema descomposicion M dificil}
	Let $\lambda \in X^+ $. For $j\geq 1,$ if $\lambda_{j+i-1}>0$ then
	\begin{equation*} \label{decomposition cases}
		\mathbf{M}_{\lambda }^j  = 	\mathbf{M}_{\lambda }^{j-1} - q^{r-j+1} \mathbf{M}_{ \lambda -  \sum_{t=j}^{r} \gamma_t }^{j-1},	
	\end{equation*}
where $j\leq r \leq n-i+1$ is the smallest integer such that $ \lambda_{r} >0 $. If such an integer does not exist then $\mathbf{M}_{\lambda }^j  = 	\mathbf{M}_{\lambda }^{j-1} $. 
\end{lemma}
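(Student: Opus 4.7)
The plan is to reduce the problem to a chain of applications of \Cref{lemma X equal to zero}. Since $\Gamma_j = \Gamma_{j-1} \sqcup \{\gamma_j\}$, grouping subsets of $\Gamma_j$ by whether or not they contain $\gamma_j$ gives
\[
\bfM_\lambda^j \;=\; \bfM_\lambda^{j-1} \;-\; q\,\bfM_{\mu_j}^{\Gamma_{j-1}},\qquad \mu_k := \lambda - \sum_{t=j}^{k} \gamma_t,
\]
so the task reduces to showing $\bfM_{\mu_j}^{\Gamma_{j-1}} = q^{r-j}\,\bfM_{\mu_r}^{\Gamma_{j-1}}$ when $r$ exists, and $\bfM_{\mu_j}^{\Gamma_{j-1}} = 0$ otherwise. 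The heart of the argument is to establish, by induction on $k$, the step identity
\[
\bfM_{\mu_k}^{\Gamma_{j-1}} \;=\; q\,\bfM_{\mu_{k+1}}^{\Gamma_{j-1}}
\]
for every $j \leq k \leq n-i$ with $\lambda_k = 0$.

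A direct computation in the basis of fundamental weights yields the closed form $\sum_{t=j}^{k} \gamma_t = \varpi_{j+i-1} + \varpi_k - \varpi_{j-1} - \varpi_{k+i}$ (with the convention $\varpi_0 = \varpi_{n+1} = 0$), so $(\mu_k)_k = \lambda_k - 1 = -1$ and therefore $s_k \cdot \mu_k = \mu_k$. A case-by-case analysis of how $s_k$ permutes $\Gamma_{j-1}$, crucially using $i \geq n/2+1$ to rule out otherwise problematic root configurations, shows that $s_k$ fixes $\Gamma_{j-1}$ pointwise except that it exchanges $\alpha_{k,k+i} \in \Phi^{>i}$ with $\gamma_{k+1} \notin \Gamma_{j-1}$. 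Setting $B_k := \Gamma_{j-1} \setminus \{\alpha_{k,k+i}\}$, we therefore have $s_k(B_k) = B_k$ and $s_k(\Gamma_{j-1}) = B_k \sqcup \{\gamma_{k+1}\}$. Expanding both $\bfM_{\mu_k}^{\Gamma_{j-1}}$ and $\bfM_{\mu_k}^{s_k(\Gamma_{j-1})}$ in terms of $\bfM_{\,\cdot\,}^{B_k}$ and combining them with the two identities furnished by \Cref{lemma X equal to zero} applied to $\mu_k$ and to $\mu_k - \gamma_{k+1}$ (whose $k$-th coordinate is also $-1$, and which satisfies $s_k \cdot (\mu_k - \gamma_{k+1}) = \mu_k - \alpha_{k,k+i}$) forces $\bfM_{\mu_k}^{B_k} = 0$ and yields $\bfM_{\mu_k}^{\Gamma_{j-1}} = q\,\bfM_{\mu_{k+1}}^{B_k}$. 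A further application of \Cref{lemma X equal to zero} to $\mu_{k+1} - \alpha_{k,k+i}$ (whose $k$-th coordinate is $\lambda_k - 1 = -1$) yields $\bfM_{\mu_{k+1} - \alpha_{k,k+i}}^{B_k} = 0$, hence $\bfM_{\mu_{k+1}}^{\Gamma_{j-1}} = \bfM_{\mu_{k+1}}^{B_k}$, closing the induction.

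When $r$ exists, iterating the step identity from $k = j$ up to $k = r-1$ produces the stated formula. When no such $r$ exists, the same iteration applies for $k$ from $j$ up to $n-i$, leaving $\bfM_{\mu_j}^{\Gamma_{j-1}} = q^{n-i+1-j}\,\bfM_{\mu_{n-i+1}}^{\Gamma_{j-1}}$; a direct check using $i \geq n/2 + 1$ shows that $s_{n-i+1}$ fixes $\Gamma_{j-1}$ pointwise, and combined with $(\mu_{n-i+1})_{n-i+1} = -1$, \Cref{lemma X equal to zero} forces $\bfM_{\mu_{n-i+1}}^{\Gamma_{j-1}} = 0$, as required. I expect the main obstacle to be the root-combinatorial bookkeeping: identifying precisely which elements of $\Gamma_{j-1}$ are moved by each $s_k$ and verifying that the swap involves only the single pair $\{\alpha_{k,k+i},\gamma_{k+1}\}$ is what drives the chain of cancellations, and this analysis depends essentially on the hypothesis $i \geq n/2 + 1$.
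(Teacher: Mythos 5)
Your proof is correct and rests on the same engine as the paper's: repeated application of \Cref{lemma X equal to zero} at each index $k$, exploiting the $s_k$-invariance of $\Gamma_{j-1}$ with $\alpha_{k,k+i}$ removed. But you reorganize the computation in a genuinely cleaner way. The paper introduces a decreasing chain of nested sets $B_j \supset B_{j+1} \supset \cdots$ (each removing one more root $\alpha_{k,k+i}$), cascades through two separate chains of expansion formulas (one for $\bfM_{\lambda-\mu}^{\Gamma_{j-1}}$, one for $\bfM_{\lambda-\gamma_j}^{\Gamma_{j-1}}$), kills the error terms, and then combines. You instead isolate a single one-step identity $\bfM_{\mu_k}^{\Gamma_{j-1}} = q\,\bfM_{\mu_{k+1}}^{\Gamma_{j-1}}$, valid whenever $\lambda_k=0$, using the non-nested single-element deletions $B_k:=\Gamma_{j-1}\setminus\{\alpha_{k,k+i}\}$, and iterate. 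The formula $\sum_{t=j}^k\gamma_t=\varpi_{j+i-1}+\varpi_k-\varpi_{j-1}-\varpi_{k+i}$ and the ensuing coordinate checks, the key swap $s_k(\alpha_{k,k+i})=\gamma_{k+1}\notin\Gamma_{j-1}$, the dot-action identity $s_k\cdot(\mu_k-\gamma_{k+1})=\mu_k-\alpha_{k,k+i}$, the vanishing of $\bfM_{\mu_{k+1}-\alpha_{k,k+i}}^{B_k}$, and the terminal step at $k=n-i+1$ all check out, and the edge cases $r=j$ and $j=n-i+1$ degenerate correctly.

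One imprecision you should fix: the assertion that ``$s_k$ fixes $\Gamma_{j-1}$ pointwise except that it exchanges $\alpha_{k,k+i}$ with $\gamma_{k+1}$'' is too strong. For example $s_k(\alpha_{k,b})=\alpha_{k+1,b}$ for any $b>k+i$ with $\alpha_{k,b}\in\Phi^{>i}$, and both of these roots lie in $B_k$, so $s_k$ does not fix $B_k$ pointwise — it only permutes it. What you actually need (and what does hold, after the case analysis you describe, using $i\geq n/2+1$ to exclude the problematic cases $b\in\{k-1,k\}$ and $t\in\{k-i,k-i+1\}$) is the setwise identity $s_k(B_k)=B_k$ together with $s_k(\alpha_{k,k+i})=\gamma_{k+1}\notin\Gamma_{j-1}$. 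Since \Cref{lemma X equal to zero} requires only the setwise equality, the rest of your argument is unaffected, but state the invariance setwise, not pointwise.
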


\begin{proof}
Let us first assume that the integer $r$ does exist. If $r=j$ then  $\lambda -\gamma_j\in X^+$ (recall that by hypothesis $\lambda_{j+i-1}>0$) and we conclude by \cref{eq proof lemma one}. Thus we assume that $r>j$.  
We define $B_j=\Gamma_{j-1}\setminus \{\alpha_{j,j+i}\}$ and for $j<k\leq n-i$ we define recursively  $B_{k}=B_{k-1} \setminus \{\alpha_{k,k+i}\}$. At first glance, this definition might seem odd, but $B_k$ is defined in such a way in order to satisfy the equation $s_k(B_k)=B_k$ for every $k$ satisfying $j\leq k <r$ (the proof of that equality is elementary but lengthy, and is left to the reader). If $\mu := \sum_{t=j}^{r} \gamma_t$, we have 
\begin{equation*}
\begin{array}{lll}
\mathbf{M}_{\lambda - \mu }^{j-1}   & = &\mathbf{M}_{\lambda - \mu }^{B_j}  -q  \mathbf{M}_{\lambda - \mu -\alpha_{j,j+i}  }^{B_j}      \\
\mathbf{M}_{\lambda -\mu }^{B_j}	  & = & \mathbf{M}_{\lambda - \mu }^{B_{j+1}} -q  \mathbf{M}_{\lambda - \mu -\alpha_{j+1,j+1+i} }^{B_{j+1}}     \\
\mathbf{M}_{\lambda - \mu }^{B_{j+1}}   & = & \mathbf{M}_{\lambda - \mu }^{B_{j+2}} -q  \mathbf{M}_{\lambda - \mu -  \alpha_{j+2,j+2+i} }^{B_{j+2}}    \\
\quad   \vdots                        & =  & \qquad  \vdots  \\
\mathbf{M}_{\lambda -  \mu }^{B_{r-2}}	   & = & \mathbf{M}_{\lambda - \mu }^{B_{r-1}} -q  \mathbf{M}_{\lambda - \mu -  \alpha_{r-1,r-1+i}  }^{B_{r-1}}   
\end{array}	
\end{equation*}
We sum all the equations above and after cancelling out similar terms we obtain 
\begin{equation*}
\mathbf{M}_{\lambda -\mu}^{j-1}  = 	\mathbf{M}_{\lambda - \mu }^{B_{r-1}} -q \sum_{k=j}^{r-1}  \bfM_{\lambda- \mu -\alpha_{k,k+i} }^{B_k} .
\end{equation*}
Let $k$ be an integer satisfying $j\leq k <r$.   By minimality of $r,$ the $k$-th coordinate of $\lambda -\mu -\alpha_{k,k+i}$ when written in terms of the  fundamental weights is equal to $-1$. Therefore, as $s_k(B_k)=B_k$ we use \Cref{lemma X equal to zero} to conclude that $\bfM_{\lambda-\mu - \alpha_{k,k+i}}^{B_k}=0$. It follows that 

\begin{equation}\label{rmo}
		\mathbf{M}_{\lambda - \mu }^{j-1} = \mathbf{M}_{\lambda - \mu }^{B_{r-1}} . 
\end{equation}
Since $\lambda_j=0$ and $s_j(B_{j})=B_j$, using \Cref{lemma X equal to zero} one can prove the second equality: 
\begin{equation} \label{from j-1 to Bj}
	\bfM_{\lambda- \gamma_j}^{j-1} =     \bfM_{\lambda- \gamma_j}^{B_j} -q \bfM_{\lambda - \gamma_j -\alpha_{j,j+i}}^{B_j} =\bfM_{\lambda- \gamma_j}^{B_j} +q \bfM_{\lambda - \gamma_j -\gamma_{j+1}}^{B_j} . 
\end{equation}
Similarly, since $\lambda_k=0$ and $s_k(B_k)=B_k$ for all $j<k<r$,  \Cref{lemma X equal to zero} implies 

 \begin{equation}  \label{from Bk-1 to Bk}
 	\bfM_{\lambda - \sum_{t=j}^k  \gamma_t }^{B_{k-1}  } =\bfM_{\lambda - \sum_{t=j}^k  \gamma_t}^{B_k} -q\bfM_{\lambda - \alpha_{k,k+i} - \sum_{t=j}^k  \gamma_t}^{B_k}  =  \bfM_{\lambda - \sum_{t=j}^k  \gamma_t}^{B_k} +q \bfM_{\lambda -  \sum_{t=j}^{k+1}  \gamma_t}^{B_k} 
 \end{equation}
Therefore,  \cref{from j-1 to Bj} and a repeated application of \cref{from Bk-1 to Bk} yields
\begin{equation*}
	\bfM_{\lambda- \gamma_j}^{j-1} = 	q^{r-j}  \bfM_{\lambda - \mu }^{B_{r-1}}   + \sum_{k=j}^{r-1} q^{k-j}  \bfM_{\lambda - \sum_{t=j}^k \gamma_t}^{B_k}.
\end{equation*}
Once again, we can use \Cref{lemma X equal to zero}  to conclude that $\bfM_{\lambda - \sum_{t=j}^k \gamma_t}^{B_k} =0$ for all $j\leq k <r$. It follows that 
\begin{equation*}
	\bfM_{\lambda- \gamma_j}^{j-1}  = q^{r-j}  \bfM_{\lambda - \mu}^{B_{r-1}}.
\end{equation*}
Finally, we obtain
\begin{equation*}
\mathbf{M}_{\lambda }^j = \mathbf{M}_{\lambda }^{j-1}  - q   \bfM_{\lambda - \gamma_j}^{j-1}  = \mathbf{M}_{\lambda }^{j-1}  -  q^{r-j+1}  \bfM_{\lambda - \mu}^{B_{r-1}} = \mathbf{M}_{\lambda }^{j-1}  -  q^{r-j+1}       \mathbf{M}_{\lambda - \mu}^{j-1},
\end{equation*}
where the last equality is \cref{rmo}. We have proved the lemma when the integer $r$ exists.\\
We now assume  $r$ does not exist. This means that $\lambda_k =0$ for all $j\leq k \leq n-i+1$. Arguing as before, we get
\begin{equation*}
	\bfM_{\lambda- \gamma_j}^{j-1} = 	q^{n-i+1-j}  \bfM_{\lambda -\nu}^{B_{n-i}},
\end{equation*}
where $\nu:= \sum_{t=j}^{n-i+1} \gamma_{t}$. It is again a lengthy but easy problem to prove that $s_{n-i+1}(B_{n-i}) = B_{n-i}$. Furthermore, the $(n-i+1)$-th component of $\lambda - \nu$ when written in terms of the basis of fundamental weights is $-1$. By applying again \Cref{lemma X equal to zero}  we conclude that $\bfM_{\lambda -\nu}^{B_{n-i}}=0$. Finally, we have
\begin{equation*}
\mathbf{M}_{\lambda }^j=\mathbf{M}_{\lambda }^{j-1}  - q 	\bfM_{\lambda- \gamma_j}^{j-1}
       =\mathbf{M}_{\lambda }^{j-1} - q^{n-i+2-j}  \bfM_{\lambda -\nu}^{B_{n-i}}
       =\mathbf{M}_{\lambda }^{j-1},
\end{equation*}
as required.
\end{proof}

We recall a definition given in the introduction. 
\begin{definition}
    Let $1\leq i \leq n$. Let $\lambda , \mu \in X^+$. We write $\lambda \geq_i \mu$ if $\lambda - \mu$ can be written as a positive integral linear combination of the elements of $\Phi^i$. 
\end{definition}

\begin{theorem} \label{teo explicit highest decompsitions}
    Let $n\geq 2$ and  $n/2+1 \leq  i \leq n $.  We have the equation 
    \begin{equation*}
        \bfN_{\lambda}^{i+1} =  \displaystyle \sum_{\mu \leq_i \lambda } q^{\frac{1}{i} \hgt (\lambda - \mu) } \bfN_{\mu}^i
    \end{equation*}
    for all $\lambda \in X^+$. 
\end{theorem}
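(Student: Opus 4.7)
The plan is to prove the theorem by induction on the intermediate index $j$, tracking the decomposition of $\bfM^0_\lambda = \bfN^{i+1}_\lambda$ along the chain $\bfM^0_\lambda, \bfM^1_\lambda, \ldots, \bfM^{n-i+1}_\lambda = \bfN^i_\lambda$. Since $\hgt(\gamma_t) = i$ for every $t$, the target identity is equivalent to
\[
\bfM^0_\lambda = \sum_{\mathbf{c}} q^{c_1 + \cdots + c_{n-i+1}} \bfM^{n-i+1}_{\lambda - c_1\gamma_1 - \cdots - c_{n-i+1}\gamma_{n-i+1}},
\]
where the sum ranges over non-negative integer tuples $\mathbf{c} = (c_1, \ldots, c_{n-i+1})$ such that $\lambda - \sum_l c_l \gamma_l \in X^+$.

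I would induct on $j$, proving
\[
(I_j)\colon \quad \bfM^0_\lambda = \sum_{\mathbf{c} \in C_j(\lambda)} q^{c_1 + \cdots + c_j} \bfM^j_{\lambda - c_1\gamma_1 - \cdots - c_j\gamma_j}
\]
for an appropriate family of admissible tuples $C_j(\lambda) \subseteq \bbZ_{\geq 0}^j$. The base case $j = 0$ is tautological (one trivial term), and the desired theorem is $(I_{n-i+1})$.

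For the inductive step $(I_{j-1}) \Rightarrow (I_j)$, I would apply \Cref{lema descomposicion M facil} and \Cref{lema descomposicion M dificil}, rearranged, to each term $\bfM^{j-1}_\mu$ appearing in $(I_{j-1})$. In the principal case (\Cref{lema descomposicion M dificil} with $r = j$), this rearrangement gives the recursion $\bfM^{j-1}_\mu = \bfM^j_\mu + q\,\bfM^{j-1}_{\mu - \gamma_j}$, which I would iterate until it terminates either via \Cref{lema descomposicion M facil} (when the $(j+i-1)$-th fundamental-weight coordinate drops to zero) or via the ``no $r$'' branch of \Cref{lema descomposicion M dificil}; both terminating regimes yield the identity $\bfM^{j-1}_{\mu'} = \bfM^j_{\mu'}$. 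Collecting the collected terms produces a finite expansion $\bfM^{j-1}_\mu = \sum_k q^k \bfM^j_{\mu - k\gamma_j}$, which I would then substitute into $(I_{j-1})$ to obtain $(I_j)$. The cases $r > j$ of \Cref{lema descomposicion M dificil} are absorbed in exactly the same way, yielding the slightly larger shifts $\gamma_j + \cdots + \gamma_r$ in the recursion.

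The hypothesis $i \geq n/2 + 1$ is what makes this cascade decouple. Computing in the fundamental-weight basis gives $\gamma_j = -\varpi_{j-1} + \varpi_j + \varpi_{j+i-1} - \varpi_{j+i}$, so the ``lower'' indices $\{j-1,j\}$ and ``upper'' indices $\{j+i-1,j+i\}$ that $\gamma_j$ perturbs are always disjoint, precisely because $n - i + 1 < i$. Hence subtracting $\gamma_j$ alters the $j$-th and $(j+i-1)$-th coordinates of the partially-shifted weight independently of the coordinates monitored for other $\gamma_t$'s, allowing the iterative applications of \Cref{lema descomposicion M facil} and \Cref{lema descomposicion M dificil} at the various values of $j$ to be organized without interference. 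The hard part I anticipate is the bookkeeping: defining $C_j(\lambda)$ sharply enough that the induction closes at each step, and verifying that $C_{n-i+1}(\lambda)$ collapses to the clean dominance condition $\{\mathbf{c}\in \bbZ_{\geq 0}^{n-i+1} : \lambda - \sum_l c_l \gamma_l \in X^+\}$. The coordinate disjointness afforded by $i \geq n/2+1$ should be exactly what forces this simplification.
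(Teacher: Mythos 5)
Your overall plan --- iterate the recursions of \Cref{lema descomposicion M facil} and \Cref{lema descomposicion M dificil} along the chain $\bfM^0_\lambda = \bfN^{i+1}_\lambda, \bfM^1_\lambda, \ldots, \bfM^{n-i+1}_\lambda = \bfN^i_\lambda$, and use the fact that $i\geq n/2+1$ forces the ``lower'' and ``upper'' coordinate windows of each $\gamma_j$ to be disjoint --- is the same as the paper's. But there is a concrete gap in the invariant $(I_j)$ that you propose.

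You posit $(I_j)$ with $C_j(\lambda)\subseteq\bbZ_{\geq 0}^j$, i.e.\ the intermediate shifts are restricted to $c_1\gamma_1+\cdots+c_j\gamma_j$. This cannot be right. When the rearranged \Cref{lema descomposicion M dificil} is applied at level $j$ to a weight for which the minimal valid $r$ exceeds $j$, the recursion is $\bfM^{j-1}_\mu = \bfM^j_\mu + q^{r-j+1}\,\bfM^{j-1}_{\mu - (\gamma_j + \cdots + \gamma_r)}$, so the shift involves $\gamma_t$ for $t>j$. Consequently the expansion of $\bfM^0_\lambda$ in $\bfM^j$-terms in general carries shifts by $\gamma_{j+1},\ldots,\gamma_{\bar n}$ as well, and your intermediate claim $\bfM^{j-1}_\mu = \sum_k q^k \bfM^j_{\mu-k\gamma_j}$ is false whenever that branch occurs. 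You acknowledge the ``slightly larger shifts'' informally, but $C_j(\lambda)\subseteq\bbZ_{\geq 0}^j$ cannot record them, so the induction does not close as stated. The paper avoids this by working with operators $R_j$ that subtract the \emph{variable} block $\gamma_j+\cdots+\gamma_r$ (with $r$ chosen minimally to keep the weight dominant), and then writing the expansion in terms of tuples $(a_1,\ldots,a_{\bar n})$ counting applications of each $R_j$, rather than tuples of $\gamma$-coefficients.

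A second and more essential gap is that you defer the proof that ``$C_{n-i+1}(\lambda)$ collapses to the dominance condition'' to bookkeeping. This is not bookkeeping: it is the heart of the theorem. In the paper the coefficient of $\bfN^i_\mu$ is $c_\lambda(\mu)q^{\hgt(\lambda-\mu)/i}$ where $c_\lambda(\mu)$ counts tuples with $R_{\bar n}^{a_{\bar n}}\cdots R_1^{a_1}(\lambda)=\mu$, and the substance of the proof is showing that $c_\lambda(\mu)$ equals $1$ if $\mu\leq_i\lambda$ and $0$ otherwise --- proved via a coordinate-by-coordinate uniqueness argument (this is where the disjointness you identified actually enters) together with an existence argument by induction on $\hgt(\lambda-\mu)$. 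Without establishing both, one cannot rule out cancellations or multiplicities in the final expansion, and the clean monomial coefficients $q^{\hgt(\lambda-\mu)/i}$ are not justified.
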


\begin{proof}
Let $\bar n = n-i+1$ and $\lambda\in X^{+}$. The hypothesis ensures that $\bar n<i$. Let  $1\leq j \leq \bar n$. Suppose that there exists $ j \leq r \leq \bar n$ such that $\lambda - \sum_{t=j}^r \gamma_t \in X^+$. In this case we define $R_j(\lambda) := \lambda - \sum_{t=j}^r \gamma_{t} $ where $r$ is minimal with the above property. If such an $r$ does not exist then $R_{j}(\lambda)$ is not defined. We stress that $R_{j}(\lambda)$ is defined if and only if $\lambda_{j+i-1}>0$ and at least one of the integers $\lambda_{j}, \lambda_{j+1}, \ldots ,\lambda_{\bar n}$ is greater than zero. With these notations \Cref{lema descomposicion M dificil} can be restated as follows 
\begin{equation} \label{eq restating Lemma decomposition }
   \bfM_{\lambda}^{j-1}=\bfM_{\lambda}^j+q^{ \frac{1}{i} \hgt(\lambda - R_{j} (\lambda))}  \bfM_{R_j(\lambda)}^{j-1}.  
\end{equation}
If $k$ is the maximal integer such that $R_j^k(\lambda)$  is defined then \Cref{lema descomposicion M facil} and \cref{eq restating Lemma decomposition } (applied  $k$ times) imply 
\begin{equation*}
	\bfM_{\lambda}^{j-1}  = \sum_{s=0}^k q^{\frac{1}{i} \hgt(\lambda - R_{j}^s(\lambda) )  }  \bfM_{R_{j}^s(\lambda)}^{j}.
\end{equation*}
Since $\bfM_{\lambda}^0=\bfN_{\lambda}^{i+1}$ and $\bfM_{\lambda}^{\bar n} = \bfN_{\lambda}^{i}$ we have that $\bfN_{\mu}^i  $ occurs in the decomposition of $\bfN_{\lambda}^{i+1}$ with coefficient $c_{\lambda}(\mu) q^{\frac{1}{i} \hgt(\lambda -\mu)}$ where $c_{\lambda }(\mu)$ is the number of tuples $(a_{1},a_{2},\ldots , a_{\bar n})\in (\bbZ_{\geq 0})^{\bar n}$ such that 
\begin{equation*}
	R_{\bar n}^{a_{\bar n}} \cdots R_{2}^{a_{2}} R_{1}^{a_{1}} (\lambda) =\mu . 
\end{equation*}  
It is clear that if $\mu \not \leq_i \lambda$ then $c_{\lambda}(\mu)=0$. Thus in order to prove the Theorem it is enough to show that if $\mu \leq_i \lambda$ then such a sequence exists and it is unique.

\vspace{0.1cm}

\noindent
\textbf{\emph{Uniqueness.}} Assume there are two sequences $(a_1,a_2, \ldots ,a_{\bar n})$ and $(a_1',a_2',\ldots , a_{\bar n}')$ such that
\begin{equation}\label{uniqueness}
	R_{\bar n}^{a_{\bar{n}}} \cdots R_{2}^{a_{2}} R_{1}^{a_{1}} (\lambda)  = R_{\bar n}^{a_{\bar{n}}'} \cdots R_{2}^{a_{2}'} R_{1}^{a_{1}'} (\lambda) .
\end{equation}
 By explicitly calculating $\sum_{t=j}^r\gamma_t$ in the basis of fundamental weights,  we see that $(R_j(\lambda))_k$ (i.e. the $k^{\mathrm{th}}$ component in the decomposition of $R_j(\lambda)$ in the fundamental weights) can only differ from $\lambda_k$ if \[k\in \{j-1,j,j+1,\ldots, \bar n\}\cup \{j+i-1,j+i,j+i+1,\ldots,n\}.\] Remark that the union is disjoint because $\bar n<i$  and that  $(R_j(\lambda))_{j+i-1}=\lambda_{j+i-1}-1$. Using this we can compare  the coefficient of $\varpi_i$ on both sides of \cref{uniqueness} and obtain $\lambda_{i}-a_1 = \lambda_{i} -a_1'$, and therefore $a_1=a_1'$. Now  assume that for some $1\leq j <\bar n$ we have $a_s=a_s'$ for all $ 1\leq s \leq j$. Then, by comparing the coefficient of $\varpi_{i+j}$ on both sides of \cref{uniqueness} we obtain $c-a_{j+1}=c-a_{j+1}'$, where $c$ is the $(i+j)$-th coordinate of $R_{j}^{a_{j}} \cdots R_{2}^{a_{2}} R_{1}^{a_{1}} (\lambda)  = R_{j}^{a_{j}'} \cdots R_{2}^{a_{2}'} R_{1}^{a_{1}'} (\lambda) $. Thus $a_{j+1}=a_{j+1}'$. By induction we conclude that both sequences are the same. 

\vspace{0.2cm}

\noindent
\textbf{\emph{Existence.}}  We will prove existence by induction in $\hgt (\lambda - \mu ).$ In the base case $\hgt (\lambda - \mu )=0$ (i.e. $\lambda = \mu$) there is nothing to prove.

Let $\lambda - \mu = \sum_{j=1}^{\bar n} m_{j}\gamma_{j}$ and suppose that existence is proved for any $\lambda', \mu'$ such that $\hgt (\lambda' - \mu' )<\hgt (\lambda-\mu)$.  Let $\lambda \neq \mu $ and let $s$ be minimal such that $m_s>0$. We have 
\begin{equation} \label{eq existence lambda -mu }
    \lambda -\sum_{j=s}^{\bar n} m_{j}\gamma_{j} =\mu .
\end{equation}
 Since $\mu \in X^+$ the above equality implies that $\lambda_{s+i-1}>0$ and that at least one of the following integers $\lambda_{s}, \lambda_{s+1}, \ldots , \lambda_{\bar n}$  is greater than zero. In particular, $R_{s}(\lambda)$ is defined.\\
We claim that $\mu \leq_i R_s(\lambda)$. Indeed, if
\begin{equation*}
    R_{s}(\lambda) = \lambda - \sum_{j=s}^r\gamma_{j}   
\end{equation*}
for some $s\leq r \leq \bar n$, then  the minimality of $r$ implies $\lambda_{s}=\lambda_{s+1}= \cdots = \lambda_{r-1}=0$. Since $\mu$ is dominant, the above equalities and \cref{eq existence lambda -mu }  imply that $m_j>0$ for all $s \leq j \leq r$ (by observing the $s^{\mathrm{th}}$ component,  $m_s>0$ and $\lambda_s=0$ imply that $m_{s+1}>0$. Then observe the $(s+1)^{\mathrm{th}}$ component and so on). Hence
\begin{equation*}
	R_{s}(\lambda) - \mu  = (	R_{s}(\lambda) - \lambda ) + (\lambda - \mu  )  =\sum_{j=s}^r(m_j-1)\gamma_{j}  + \sum_{j=r+1}^{\bar n} m_j \gamma_{j}
\end{equation*}
and  the claim follows.\\
We notice that $\hgt(R_s(\lambda) -\mu) <\hgt (\lambda - \mu )$. Then, by induction hypothesis, there exists a sequence  $(b_1,b_2, \ldots ,b_{\bar n})\in \mathbb{Z}_{\geq 0}^{\bar n }$ such that
\begin{equation} \label{eq existence A}
	R_{\bar n}^{b_{\bar{n}}} \cdots R_{2}^{b_{2}} R_{1}^{b_{1}} (R_{s}(\lambda)) =\mu .
\end{equation} 
By \cref{eq existence lambda -mu } we have $\lambda_k=\mu_k$ for all $i\leq k <s+i-1$ (notice that substracting $\gamma_j$ for $s\leq j\leq \bar n$ does not affect the $k^{\mathrm{th}}$ component for  $i\leq k <s+i-1$).  Then \cref{eq existence A} implies $b_1=b_2=\cdots = b_{s-1}=0$. Therefore, the desired sequence is given by $a_s=b_s+1$ and $a_j =b_j$ for $j\neq s$.
\end{proof}


\section{Explicit decompositions in types \texorpdfstring{$\tilde{A}_3$}{A\_3} and \texorpdfstring{$\tilde{A}_4$}{A\_4}}\label{last}

\subsection{Type \texorpdfstring{$\tilde{A}_3$}{A\_3}.}

Throughout this section we fix $n=3$.  Let  $\lambda = a\varpi_1 + b\varpi_2 + c\varpi_3 \in X^+$.   \Cref{teo explicit highest decompsitions} gives the decomposition of  $ \underline{\bfH}_{\lambda } = \bfN_{\lambda}^4$ in terms of $\{\bfN_{\mu }^3  \mid \mu \in X^+   \}$ (see \cref{eq n4 en n4 intro}).
Furthermore, \Cref{coro lowest decomposition} provides the decomposition of $\bfN_{\lambda}^2$ in terms of $\{\bfN_{\mu }^1  \mid \mu \in X^+   \}$. Therefore, to complete the description of all the decompositions we need to explain  the decomposition of $\bfN_{\lambda}^3$ in terms of $\{ \bfN_{\mu}^2 \mid \mu \in X^+ \}$. We proceed indirectly by first finding the inverse decomposition.  It follows from Definition \ref{definition pre-canonical basis} that 
\begin{equation}\label{generic decomposition}
    \bfN_{\lambda}^2 = \bfN^3_{\lambda} -q \bfN^3_{\lambda - \alpha_{12}} -q\bfN^3_{\lambda -\alpha_{23}} +q^2\bfN^3_{\lambda -\alpha_{12}-\alpha_{23}}
\end{equation}
for $a\geq 1$, $b\geq 2$ and $c\geq 1$. We refer to the above as the generic decomposition and to any other case as non-generic.  The following lemma provides the non-generic decompositions for $\bfN^2_{\lambda}$ in terms of $\{\bfN^3_{\mu}  \mid \mu \in X^+  \} $. 

\begin{lemma}
 Let  $\lambda = a\varpi_1 + b\varpi_2 + c\varpi_3 \in X^+$. In type $\tilde{A}_3$ the non-generic decomposition for $\bfN^2_{\lambda}$ in terms of $\{\bfN^3_{\mu} \mid  \mu \in X^+   \}$ is given in \Cref{tab:decom N1 in N2}.
\begin{table}[!h]
    \centering
    \[
    \begin{array}{||c||c|c|c||l|}\hline  
\mbox{Row} &  a     &    b     &     c     &    \mbox{Decomposition}     \\ \hline   \hline
      1    &   0    & \geq 1   & \geq 1    &    \bfN^3_{\lambda}  -q\bfN^3_{\lambda-\alpha_{23}}                  \\     \hline 
      2    &  0     &  0       &  \geq 0   &    \bfN^3_{\lambda}                \\   \hline
      3    & 0      &  1       & 0         &    \bfN^3_{\lambda}                \\   \hline
      4    & 0      &  \geq 2  & 0         &    \bfN^3_{\lambda} - q^2  \bfN^3_{\lambda-\alpha_{12}-\alpha_{23}}                \\   \hline
      5    & \geq 1 & \geq 1   & 0         &    \bfN^3_{\lambda}  -q\bfN^3_{\lambda-\alpha_{12}}                  \\   \hline
      6    & \geq 0 &   0      & 0         &    \bfN^3_{\lambda}             \\   \hline
      7    & \geq 1 &   1      &  \geq 1   &    \bfN^3_{\lambda} -q\bfN^3_{\lambda -\alpha_{12}} -q\bfN^3_{\lambda - \alpha_{23}}            \\   \hline
      8    & \geq 1 &   0      &  \geq 1   &    \bfN^3_{\lambda} -q^2\bfN^3_{\lambda -\alpha_{13}}            \\   \hline
    \end{array}
    \]
    \caption{Decomposition of $\bfN^2_{\lambda} $ in terms of $\{\bfN_{\mu}^3 \mid \mu  \in X^+\}  $ for $\tilde{A}_3$.}
    \label{tab:decom N1 in N2}
\end{table}
\end{lemma}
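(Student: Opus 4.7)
The plan is a direct case-by-case expansion using Definition~\ref{definition pre-canonical basis}. Since $\Phi^{\geq 3}=\{\alpha_{13}\}$ in type $\tilde A_3$, splitting the sum defining $\bfN^2_\lambda$ according to whether each of $\alpha_{12}$ and $\alpha_{23}$ belongs to the indexing set $I$ yields
\[
\bfN^2_\lambda \;=\; \bfM^{\{\alpha_{13}\}}_\lambda - q\,\bfM^{\{\alpha_{13}\}}_{\lambda-\alpha_{12}} - q\,\bfM^{\{\alpha_{13}\}}_{\lambda-\alpha_{23}} + q^2\,\bfM^{\{\alpha_{13}\}}_{\lambda-\alpha_{12}-\alpha_{23}}.
\]
Whenever the weight $\mu$ inside a summand is dominant, $\bfM^{\{\alpha_{13}\}}_\mu$ is by definition $\bfN^3_\mu$, which already yields the generic formula \cref{generic decomposition} in the range $a\geq 1$, $b\geq 2$, $c\geq 1$.

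For the non-generic rows, the evaluation of each $\bfM^{\{\alpha_{13}\}}_\mu$ rests on two tools. Since $s_2(\alpha_{13})=\alpha_{13}$, Proposition~\ref{lemma X equal to zero} with $k=2$ forces $\bfM^{\{\alpha_{13}\}}_\mu=0$ whenever $\mu_2=-1$. When $\mu$ is regular but not dominant, one instead expands $\bfM^{\{\alpha_{13}\}}_\mu=\HTIL_\mu-q\,\HTIL_{\mu-\alpha_{13}}$ and simplifies each $\HTIL$ via Lemma~\ref{lemma preserve singularity and regularity}: either its argument is singular and the term vanishes, or a single dot-action reflection returns $\pm\undH_{\bar\mu}$ for some dominant $\bar\mu$. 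The identities $\alpha_{12}+\alpha_{23}=2\varpi_2$, $\alpha_{12}+\alpha_{13}=2\varpi_1+\varpi_2$, $\alpha_{23}+\alpha_{13}=\varpi_2+2\varpi_3$ let us read off instantly which fundamental-weight coordinate of the argument becomes $-1$, $-2$, or $0$ after each subtraction of roots, and hence which simplification to apply.

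I would organise the eight rows by this vanishing pattern rather than in table order. Rows~2, 3 and 6 are trivial because all three non-dominant summands have second coordinate $-1$, so only $\bfN^3_\lambda$ survives. Rows~1 and 5 (and likewise Rows~4 and 7) are mirror images under the Dynkin symmetry $s_1\leftrightarrow s_3$: one of the three subtracted-root summands remains dominant, another is annihilated by singularity via $s_1$ (resp.\ $s_3$), and the remaining cancellation requires a single extra dot-action reflection together with the vanishing of a $\HTIL$-term whose argument lands on a wall. Row~8 is the subtlest: the weight $\lambda-\alpha_{12}-\alpha_{23}$ has second coordinate $-2$ (not $-1$), so its $\bfM^{\{\alpha_{13}\}}$-value is nonzero, and a single $s_2$-dot reflection of both $\HTIL_\mu$ and $\HTIL_{\mu-\alpha_{13}}$ produces exactly the $-q^2\bfN^3_{\lambda-\alpha_{13}}$ correction predicted in the table.

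The main obstacle is organisational, not conceptual. Within Rows~1, 4, 7 and 8 one must further split into boundary sub-cases (for instance $b=1$ versus $b\geq 2$ in Row 1, or $a=1$ versus $a\geq 2$ in Row 8), because auxiliary weights pick up additional singularities at the extreme values and some intermediate $\HTIL$-terms flip between zero and a single $\undH$. In every sub-case, however, the two tools above combined with the expansion displayed above collapse $\bfN^2_\lambda$ to the claimed right-hand side, completing the verification.
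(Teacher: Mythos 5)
Your expansion
\[
\bfN^2_\lambda \;=\; \bfM^{\{\alpha_{13}\}}_\lambda - q\,\bfM^{\{\alpha_{13}\}}_{\lambda-\alpha_{12}} - q\,\bfM^{\{\alpha_{13}\}}_{\lambda-\alpha_{23}} + q^2\,\bfM^{\{\alpha_{13}\}}_{\lambda-\alpha_{12}-\alpha_{23}}
\]
is correct and your two tools (the $s_2$-vanishing for $\mu_2=-1$ via \Cref{lemma X equal to zero}, plus singularity/dot-reflection via \Cref{lemma preserve singularity and regularity}) are indeed sufficient; this is a workable brute-force route. The paper's own argument is sharper: for Row 1 it takes $A=\Pgeq{2}\setminus\{\alpha_{12}\}$, notes $s_1(A)=A$, and kills the single correction term $\bfM^A_{\lambda-\alpha_{12}}$ outright because $a=0$, never passing through a four-term expression. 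Your always-peel-to-$\{\alpha_{13}\}$ scheme only ever exploits $k=2$ in \Cref{lemma X equal to zero}, so it forgoes the $s_1$- and $s_3$-symmetries of larger sets $A$ and consequently has to do more cancellations by hand. Both methods work; the paper's is shorter per row.

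Two concrete bookkeeping errors in your organizational scheme need fixing before the plan is executable. First, it is not true that in Rows 2, 3, 6 ``all three non-dominant summands have second coordinate $-1$.'' Since $\alpha_{12}+\alpha_{23}=2\varpi_2$, the weight $\lambda-\alpha_{12}-\alpha_{23}$ has second coordinate $b-2$, which is $-2$ (not $-1$) when $b=0$ (Rows 2 and 6); and when $b=1$ (Row 3), $\lambda-\alpha_{12}$ and $\lambda-\alpha_{23}$ have second coordinate $0$, not $-1$. In all of these sub-cases the offending summand still vanishes, but only because the relevant weights $\mu$ and $\mu-\alpha_{13}$ turn out to be singular — you must actually invoke \Cref{lemma preserve singularity and regularity} there, not the $\mu_2=-1$ shortcut. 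Second, Rows 4 and 7 are not mirror images under $s_1\leftrightarrow s_3$: that symmetry sends $(a,b,c)\mapsto(c,b,a)$, and both Row 4 $(0,\geq 2,0)$ and Row 7 $(\geq 1,1,\geq 1)$ are fixed by it, as is Row 8. Only Rows 1 and 5 (and Rows 2 and the $c\geq 1$ part of Row 6, etc.) are genuinely exchanged. Each of Rows 4, 7, 8 therefore requires its own computation, which your ``mirror'' shortcut would silently skip.
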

\begin{proof}
The result follows by a case-by-case analysis. We only prove here the decomposition given by Row 1 in Table \ref{tab:decom N1 in N2}. So we are in the case $a=0$, $b\geq 1$ and $c \geq 1$. 
Let $A=\Pgeq{2}\setminus\{\alpha_{12}\}$. Notice that $s_1(A)=A$. By applying \Cref{lemma X equal to zero} we obtain $
\bfN_{\lambda}^2= \bfM_{\lambda}^A -q\bfM_{\lambda - \alpha_{12}}^A = \bfM_{\lambda}^A = \bfN^3_{\lambda}  -q\bfN^3_{\lambda-\alpha_{23}},
$
which coincides with the decomposition predicted by Row 1 in Table \ref{tab:decom N1 in N2}.
\end{proof}


Given $\lambda \in X^+$ we define $\Nhat_{\lambda}^3$ as the right hand side of \cref{eq intro N3 in N2}.

\begin{lemma}  \label{lemma decomposition hat}
 Let $\lambda \in X^+$. The decomposition of $\bfN_{\lambda}^2$ in terms of $\{ \Nhat_{\mu}^3  \mid  \mu \in X^+  \} $ is the same as the decomposition in terms of  $\{ \bfN_{\mu}^3  \mid \mu \in X^{+}\}$. 
\end{lemma}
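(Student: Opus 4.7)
The plan is to reduce the lemma to a combinatorial identity and then verify it case by case. Write the $\bfN^3$-decomposition of $\bfN_\lambda^2$ given by \cref{generic decomposition} and \Cref{tab:decom N1 in N2} as $\bfN_\lambda^2 = \sum_\mu c_{\lambda,\mu}\,\bfN_\mu^3$, so that the claim amounts to $\bfN_\lambda^2 = \sum_\mu c_{\lambda,\mu}\,\Nhat_\mu^3$. Expanding $\Nhat_\mu^3 = \sum_{\nu \in I_\mu} q^{d_\mu(\nu)}\bfN_\nu^2$ and equating coefficients of $\bfN_\nu^2$, the lemma becomes the identity
\[
\sum_{\mu : \nu \in I_\mu} c_{\lambda,\mu}\, q^{d_\mu(\nu)} \;=\; \delta_{\nu,\lambda}, \qquad \nu\in X^+.
\]
Since $(c_{\lambda,\mu})$ is supported on at most four shifts of $\lambda$ and each $I_\mu$ has an explicit parametrisation by triples $(n,m,l)$, the identity is amenable to direct verification.

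For the generic case $a \geq 1, b \geq 2, c \geq 1$, I would write any candidate $\nu = \lambda - n\alpha_{12} - m\alpha_{23} - l\alpha_{13}$ and enumerate, for each of the four terms $\mu \in \{\lambda,\,\lambda - \alpha_{12},\,\lambda - \alpha_{23},\,\lambda - \alpha_{12} - \alpha_{23}\}$, the conditions under which $\nu$ lies in $I_\mu$ as a Type A element (no $\alpha_{13}$-shift) or Type B element (with an $\alpha_{13}$-shift). For $l=0$ the four contributions carry signs $+,-,-,+$ and equal powers of $q$, hence cancel unless one of $n,m$ forces us onto a boundary where a term drops out; the only uncancelled configuration is $\nu=\lambda$. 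For $l \geq 1$, the constraint $\lambda - n\alpha_{12}-m\alpha_{23}\in \bbN\varpi_1+\bbN\varpi_3$ built into the definition of $I_\mu$ pins $n$ and $m$, and the same sign pattern on the $q^{2l}$ factors produces the required cancellation.

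For each non-generic row of \Cref{tab:decom N1 in N2}, the verification is shorter because $c_{\lambda,\cdot}$ has fewer nonzero entries, but the set $I_\lambda$ is also truncated in a manner depending on the boundary coordinates of $\lambda$, and one must confirm that these two simplifications match. Rows 2, 3, 6 are immediate, since $c_{\lambda,\mu} = \delta_{\lambda,\mu}$ and $I_\lambda = \{\lambda\}$ in each of those situations. Rows 1, 5, 7 are two- or three-term identities that reproduce the generic computation with one branch collapsed.

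The main obstacle is Rows 4 and 8, where the coefficient $c_{\lambda,\mu}$ is supported on $\{\lambda, \lambda - \alpha_{12}-\alpha_{23}\}$ and $\{\lambda, \lambda - \alpha_{13}\}$ respectively; precisely here the Type~B part of the set $I_\mu$ becomes genuinely active, and the exponents $d_\mu(\nu)$ mix the contributions from the two shift directions. The plan is to write out $I_\lambda$ and $I_{\lambda - \alpha_{13}}$ (resp.\ $I_{\lambda - \alpha_{12}-\alpha_{23}}$) explicitly using the boundary hypothesis on $(a,b,c)$, note that $\hgt(\alpha_{13}) = \hgt(\alpha_{12})+\hgt(\alpha_{23}) - 2$ so that the weights $q^2$ coming from a Type B $\alpha_{13}$-step and from a pair of Type A $\alpha_{12},\alpha_{23}$-steps agree, and verify the required cancellation. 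Once those two rows are done the lemma is complete.
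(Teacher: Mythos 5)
Your plan is exactly the case-by-case verification the paper has in mind: the paper's own proof of this lemma is deferred (``We leave the details to the reader since in \S 5.2 we treat in full detail the similar but harder situation in type $\tilde{A}_4$''), and reducing the claim to the identity $\sum_{\mu : \nu \in I_\mu} c_{\lambda,\mu}\, q^{d_\mu(\nu)} = \delta_{\nu,\lambda}$ and checking it against \cref{generic decomposition} and \Cref{tab:decom N1 in N2} row by row is precisely that verification, parallel to what the paper carries out in Propositions \ref{Propo generic case A4}--\ref{propo N3 in terms of N2 with hat} for $\tilde A_4$. One small arithmetic slip to correct in the discussion of Rows 4 and 8: a root $\alpha$ contributes $\hgt(\alpha)-1$ to the degree $d$, so the relevant relation is $\hgt(\alpha_{13}) = \hgt(\alpha_{12}) + \hgt(\alpha_{23}) - 1$ (i.e.\ $3 = 2 + 2 - 1$), not ``$-2$''; the resulting conclusion that a single $\alpha_{13}$-step and a pair of $\alpha_{12},\alpha_{23}$-steps both contribute $q^2$ is nonetheless correct.
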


\begin{proof}
We only need to check that the decomposition of $\bfN_{\lambda}^2$ in terms of $\{\Nhat_{\mu}^3 \mid \mu \in X^{+} \} $ coincides with the one given in \cref{generic decomposition} and \Cref{tab:decom N1 in N2}. This requires a case-by-case analysis. We leave the details to the reader since in \S\ref{a4} we treat in full detail the similar but harder situation in type $\tilde{A}_4$.
\end{proof}

\begin{theorem} \label{teo decom in type A3 }
For all $\lambda \in X^{+}$ we have $\bfN_{\lambda}^3 = \Nhat_{\lambda}^3$. Therefore, \cref{eq intro N3 in N2} provides the decomposition of $\bfN_{\lambda}^3$ in terms of $\{\bfN_{\mu}^2 \mid \mu \in X^{+} \}$. 
\end{theorem}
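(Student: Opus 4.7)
My plan is to deduce the theorem from \Cref{lemma decomposition hat} by a formal change-of-basis argument. The lemma states that $\bfN_\lambda^2$ has identical expansions in terms of $\{\bfN_\mu^3\}$ and $\{\Nhat_\mu^3\}$, and I want to promote this equality of coefficients to an equality of the expanded objects themselves. First I would record that $\{\bfN_\mu^3\}_{\mu\in X^+}$ and $\{\bfN_\mu^2\}_{\mu\in X^+}$ are both bases of $\Htil$ that are unitriangular with respect to the Kazhdan--Lusztig basis (by \Cref{lambda-I}), hence the transition matrix $C$ with entries $C_{\lambda,\mu}(q)$ defined by $\bfN_\lambda^2 = \sum_\mu C_{\lambda,\mu}(q)\bfN_\mu^3$ is itself unitriangular with respect to the dominance order on $X^+$, and in particular invertible.

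Let $D$ be its inverse, so that $\bfN_\nu^3 = \sum_\mu D_{\nu,\mu}(q)\bfN_\mu^2$. By \Cref{lemma decomposition hat}, the identity $\bfN_\lambda^2 = \sum_\mu C_{\lambda,\mu}(q)\Nhat_\mu^3$ also holds with the same coefficients $C_{\lambda,\mu}(q)$. Writing $\Nhat_\nu^3 = \sum_\mu E_{\nu,\mu}(q) \bfN_\mu^2$ (which is exactly \cref{eq intro N3 in N2}), we obtain $CE = \mathrm{Id} = CD$, whence $E = D$. Thus $\bfN_\nu^3 = \Nhat_\nu^3$ for every $\nu \in X^+$, and the equation \cref{eq intro N3 in N2} then gives the promised decomposition of $\bfN_\lambda^3$ in terms of $\{\bfN_\mu^2\}$.

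The main obstacle is not in this final argument, which is essentially formal, but in \Cref{lemma decomposition hat} itself: one must match the decomposition of $\bfN_\lambda^2$ in the (simply-defined) family $\{\Nhat_\mu^3\}$ against its decomposition in the $\{\bfN_\mu^3\}$ basis, both generically (via \cref{generic decomposition}) and in each of the non-generic cases of \Cref{tab:decom N1 in N2}. Once that bookkeeping has been carried out, the proof of \Cref{teo decom in type A3 } reduces to the change-of-basis manipulation above.
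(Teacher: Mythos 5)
Your proposal is correct and takes essentially the same approach as the paper: the paper simply declares the theorem "a direct consequence of Lemma~\ref{lemma decomposition hat}," and your change-of-basis argument (using that the transition matrix between $\{\bfN_\mu^2\}$ and $\{\bfN_\mu^3\}$ is unitriangular in the dominance order, hence invertible, so that the two right inverses $D$ and $E$ coincide) is exactly the formal step being left implicit there.
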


\begin{proof}
This is a direct consequence of  \Cref{lemma decomposition hat}. 
\end{proof}

\subsection{Type \texorpdfstring{$\tilde{A}_4$}{A\_4}.} \label{a4}
In this section we fix $n=4$. We stress that $\undH_{\lambda} = \bfN_{\lambda}^5$ for all $\lambda\in X^+$.  \Cref{teo explicit highest decompsitions} provides the decomposition of $\bfN_{\lambda}^5$  in terms of $\{\bfN_{\mu}^4 \mid \mu \in X^+\}$ and of $\bfN_{\lambda}^4$ in terms of $\{\bfN_{\mu}^3 \mid \mu \in X^+\}$. On the other hand, the decompsition of $\bfN_{\lambda}^2$ in terms of $\{\bfN_{\mu }^1  \mid \mu \in X^+   \}$ is covered by \Cref{coro lowest decomposition}. Thus, such as in the previous section, we only need to specify the decomposition of  $\bfN_{\lambda}^3$ in terms of $\{\bfN_{\mu}^2 \mid \mu \in X^{+}\}$. \\
Let $\lambda,\mu$ in $X^+$. We write $\mu \preccurlyeq_2 \lambda$ if $\lambda - \mu$ can be written as an integral non-negative  linear combination of the elements of $\Pgeq{2}$. Given $\mu \preccurlyeq_2  \lambda$   we denote by $\mcL_{\lambda}(\mu)$ the set of all non-negative linear combinations of the elements of $\Pgeq{2}$ equal to  $\lambda - \mu$. We denote a linear combination $L\in \mcL_{\lambda}(\mu)$ by $L=(l_{ij})$, where $l_{ij}\in \mathbb{N}$ is the coefficient of $\alpha_{ij}$ in $L$.
%
Given $L=(l_{ij}) \in  \mcL_{\lambda}(\mu) $ we define its \emph{degree} as
\begin{equation*}
	\deg (L): =  \sum_{1\leq i < j \leq 4}   l_{ij}( \hgt (\alpha_{ij} ) -1). 
\end{equation*}
On the other hand, we set $\nu_0(L) =\lambda$ and for $k=1,2,3$ we recursively define 
\begin{equation*}
	\nu_{k}(L)=  \nu_{k-1}(L) - \sum_{j-i=k} l_{ij}\alpha_{ij}. 
\end{equation*}
 We define integers $\nu_{k}^{j}(L)$ by the equation
$ \nu_{k}(L)= \nu_k^1(L) \varpi_1 + \nu_k^2(L)\varpi_2 + \nu_k^3(L) \varpi_3 + \nu_{k}^4(L)\varpi_4$.

\begin{definition}\label{defi admisible}
	We say that an element $L\in \mcL_{\lambda}(\mu)$ is \emph{admissible} if it satisfies the following conditions
	\begin{itemize}
		\item $v_{k}(L) \in X^+$ for all $1\leq k\leq 3$;
		\item If $l_{13}\neq 0$ then $\nu_1^2(L)=0$;
		\item If $l_{24}\neq 0 $ then $\nu_1^3(L)=0$;
		\item If $l_{14}\neq 0 $ then $\nu_2^2(L)=\nu_2^3(L)=0$;
	\end{itemize}
We denote the set of all admissible $L$ by 	 $\mcL_{\lambda}^a(\mu)$ and define
$r_{\lambda}(\mu)= \displaystyle \sum_{L\in \mcL_{\lambda}^a(\mu)} q^{\deg (L)}  .$
Finally, we define
\begin{equation} \label{defi N3 hat in type A4}
  \hat{\bfN}_{\lambda}^3=\displaystyle \sum_{\mu \preccurlyeq_2 \lambda   } r_{\lambda}(\mu) \bfN_{\mu}^2.  
\end{equation}
\end{definition}


\begin{theorem} \label{teo decomposition in A4}
    For all $\lambda \in X^+$ we have $  \bfN_{\lambda}^3 = \hat{\bfN}_{\lambda}^3  $. Consequently, \cref{defi N3 hat in type A4} provides the decomposition of $\bfN_{\lambda}^3$ in terms of $\{\bfN_{\mu}^2 \mid \mu \in X^{+} \}$. 
\end{theorem}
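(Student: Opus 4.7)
The plan is to mirror the strategy of Theorem 5.3 for the type $\tilde{A}_3$ case. Namely, I will express $\bfN_\lambda^2$ in two different ways: once as a linear combination of $\{\bfN_\mu^3\}$ and once as a linear combination of $\{\hat{\bfN}_\mu^3\}$, and verify that the two expressions coincide. Since $\hat{\bfN}^3$ is already unitriangular with respect to $\bfN^2$ (the element $L=0$ is trivially admissible, giving $r_\lambda(\lambda)=1$), the set $\{\hat{\bfN}_\mu^3\}$ is a basis of $\Htil$, and equality of the two transition matrices then forces $\bfN_\lambda^3=\hat{\bfN}_\lambda^3$ for all $\lambda\in X^+$.

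The first step is to compute the decomposition of $\bfN_\lambda^2$ in $\{\bfN_\mu^3\}$. Since $\Pgeq{2}=\Pgeq{3}\sqcup\{\alpha_{12},\alpha_{23},\alpha_{34}\}$, factoring Definition 2.10 produces the formal identity
\[
\bfN_\lambda^2 \;=\; \sum_{K\subset\{\alpha_{12},\alpha_{23},\alpha_{34}\}} (-q)^{|K|}\,\bfN_{\lambda-\Sigma_K}^3.
\]
This is the \emph{generic} decomposition, valid whenever every $\lambda-\Sigma_K$ lies in $X^+$ (e.g.\ when $\lambda_1,\lambda_4\geq 1$ and $\lambda_2,\lambda_3\geq 2$). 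For non-generic $\lambda$, some of the $\bfN_{\lambda-\Sigma_K}^3$ must be rewritten using Proposition 4.3 applied to suitable subsets $A\subset\Pgeq{3}$ that are stable under the appropriate simple reflection $s_k$. As in the $\tilde{A}_3$ case, this gives a finite table of specialized formulas indexed by the vanishing pattern of $(\lambda_1,\lambda_2,\lambda_3,\lambda_4)$.

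The second step is to compute the decomposition of $\bfN_\lambda^2$ in $\{\hat{\bfN}_\mu^3\}$. Here Definition 5.6 gives $\hat{\bfN}_\lambda^3=\sum_\mu r_\lambda(\mu)\bfN_\mu^2$ as an upper-unitriangular system, which one inverts by the standard Möbius-type argument to obtain a formula $\bfN_\lambda^2=\sum_\mu \tilde r_\lambda(\mu)\hat{\bfN}_\mu^3$. A case-by-case check, controlled by the admissibility constraints on $\nu_1^2(L),\nu_1^3(L),\nu_2^2(L),\nu_2^3(L)$, shows that $\tilde r_\lambda(\mu)$ matches the coefficient of $\bfN_\mu^3$ produced in Step 1, in every vanishing pattern. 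Once the two tables agree, Step 3 is just linear algebra: both transition matrices from $\{\bfN^2\}$ to $\{\bfN^3\}$ and to $\{\hat{\bfN}^3\}$ coincide, so their inverses do as well, giving $\bfN_\lambda^3=\hat{\bfN}_\lambda^3$.

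The main obstacle lies in Step 2: the combinatorics is considerably heavier than in $\tilde{A}_3$, since $\Pgeq{2}$ now contains six roots and the admissibility conditions in Definition 5.6 couple the coefficients $l_{ij}$ through three different intermediate weights $\nu_k(L)$. The bookkeeping challenge is to show that for each non-generic vanishing pattern of $\lambda$, the admissible linear combinations $L\in\mathcal{L}^a_\lambda(\mu)$ are precisely those whose contribution $q^{\deg(L)}\bfN_\mu^2$ matches the cancellations forced by Proposition 4.3. Concretely, the constraints ``$l_{13}\neq 0\Rightarrow \nu_1^2(L)=0$'', ``$l_{24}\neq 0\Rightarrow\nu_1^3(L)=0$'', and ``$l_{14}\neq 0\Rightarrow\nu_2^2(L)=\nu_2^3(L)=0$'' should be shown to encode exactly the singular-weight cancellations that occur when one removes an $\alpha_{13}$, $\alpha_{24}$, or $\alpha_{14}$ summand from $\Pgeq{3}$ and applies the Weyl-symmetry lemma (Proposition 4.3). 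Once this combinatorial dictionary is in place, the rest of the proof is a mechanical (if lengthy) enumeration analogous to the one sketched in Lemma 5.2.
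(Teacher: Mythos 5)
Your proposal is correct and follows essentially the same strategy as the paper: compute the transition matrix from $\{\bfN^2_\mu\}$ to $\{\bfN^3_\mu\}$ (a generic formula plus a table of non-generic cases handled via Proposition 4.3), compute the transition matrix from $\{\bfN^2_\mu\}$ to $\{\hat{\bfN}^3_\mu\}$, and conclude from their coincidence that the two bases agree. The one cosmetic difference is that you phrase Step 2 as a Möbius inversion of $\hat{\bfN}^3_\lambda=\sum_\mu r_\lambda(\mu)\bfN^2_\mu$ followed by a comparison of coefficients, whereas the paper avoids literal inversion by directly substituting the known decomposition of $\hat{\bfN}^3_\mu$ into the right-hand side $\sum_\mu c_{\lambda,\mu}\hat{\bfN}^3_\mu$ and checking (via the admissibility constraints and an inclusion--exclusion argument in the generic case, and supporting lemmas in the non-generic ones) that it collapses to $\bfN^2_\lambda$; the two are mathematically equivalent and the heavy case-by-case bookkeeping you flag is indeed where the work lies.
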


The proof of \Cref{teo decomposition in A4} will follow the same lines as the proof of \Cref{teo decom in type A3 }, this is, we will eventually show that the expansion of $\bfN_{\lambda}^2$ in terms of $\{\bfN_{\mu}^3 \mid \mu \in X^+\}$ is the same as the expansion in terms of $\{\hat{ \bfN_{\mu}^3} \mid \mu \in X^+\}$. As in type $\tilde{A}_3$ we have a generic case. 

\begin{prop}  \label{Propo generic case A4}
  Let $\lambda = a\varpi_1 + b\varpi_2 +c\varpi_3+ d\varpi_4 \in X^+$. Assume that $a\geq 1$, $b\geq 2$, $c\geq 2$ and $d\geq 1$. Then, we have
 \begin{align}
   \label{generic in type A4}
     \bfN_{\lambda }^2 & = \displaystyle \sum_{J\subset \Phi^2} (-q)^{|J|} \bfN_{\lambda-\Sigma_{J} }^3 \\
    \label{generic in type A4 hat }
     \bfN_{\lambda }^2 &= \displaystyle \sum_{J\subset \Phi^2} (-q)^{|J|} \hat{\bfN}_{\lambda-\Sigma_{J}}^3. 
 \end{align} 
\end{prop}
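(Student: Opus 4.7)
The plan is to treat the two equations separately. Equation~\cref{generic in type A4} is essentially definitional and I would dispatch it first. Starting from the definition $\bfN_\lambda^2 = \sum_{I \subset \Phi^{\geq 2}}(-q)^{|I|}\HTIL_{\lambda-\Sigma_I}$, I would split each $I$ uniquely as $I = J \sqcup K$ with $J \subset \Phi^2$ and $K \subset \Phi^{\geq 3}$, so that the sum factors as $\sum_{J \subset \Phi^2}(-q)^{|J|}\bigl(\sum_{K \subset \Phi^{\geq 3}}(-q)^{|K|}\HTIL_{(\lambda-\Sigma_J)-\Sigma_K}\bigr)$. The inner sum is $\bfN_{\lambda-\Sigma_J}^3$, provided $\lambda - \Sigma_J \in X^+$. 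A direct case check against the eight subsets $J \subset \{\alpha_{12},\alpha_{23},\alpha_{34}\} = \Phi^2$ shows that the generic hypothesis $a\geq 1,\ b\geq 2,\ c\geq 2,\ d\geq 1$ is exactly what guarantees this dominance.

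For \cref{generic in type A4 hat}, the plan is to expand each $\hat{\bfN}_{\lambda-\Sigma_J}^3 = \sum_{\nu}r_{\lambda-\Sigma_J}(\nu)\bfN_\nu^2$ and interchange the sums. Since $\{\bfN_\nu^2\}$ is a basis, matching the coefficient of $\bfN_\mu^2$ on both sides reduces the claim to the combinatorial identity
\[ \sum_{J \subset \Phi^2}(-q)^{|J|}\,r_{\lambda-\Sigma_J}(\mu) \;=\; \delta_{\mu,\lambda}. \]
The case $\mu = \lambda$ is immediate (only $J = \emptyset$ with $L = 0$ contributes), so the real issue is the vanishing when $\mu \neq \lambda$.

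The key maneuver will be a change of variables: I would reinterpret a pair $(J, L)$ with $L \in \mcL_{\lambda-\Sigma_J}^a(\mu)$ as a pair $(M, J)$ where $M \in \mcL_\lambda(\mu)$ is obtained from $L$ by adding $1$ to the coefficient of each $\alpha \in J$, and $J$ ranges over the subsets of $S_M := \{\alpha \in \Phi^2 : m_\alpha \geq 1\}$. Two observations make this parametrization tractable: first, $|J| + \deg L = \deg M$ depends only on $M$; second, since $\lambda - \Sigma_J - l_{12}\alpha_{12} - l_{23}\alpha_{23} - l_{34}\alpha_{34} = \lambda - m_{12}\alpha_{12} - m_{23}\alpha_{23} - m_{34}\alpha_{34}$, each $\nu_k(L)$ equals the corresponding $\nu_k(M)$, so the admissibility of $L$ becomes a property $P(M)$ depending only on $M$ (the conditions on $l_{13}, l_{24}, l_{14}$ translate verbatim to conditions on $m_{13}, m_{24}, m_{14}$). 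With this, the identity reduces to
\[ \sum_{M \in \mcL_\lambda(\mu)}q^{\deg M}\,[P(M)]\sum_{J \subset S_M}(-1)^{|J|} \;=\; \delta_{\mu,\lambda}. \]
Since $\sum_{J \subset S_M}(-1)^{|J|}$ vanishes whenever $S_M \neq \emptyset$, only $M$ supported entirely on $\{\alpha_{13},\alpha_{24},\alpha_{14}\}$ can contribute.

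The main obstacle, and where the generic hypothesis plays its crucial role, will be to verify that no nonzero $M$ supported on $\{\alpha_{13},\alpha_{24},\alpha_{14}\}$ can be admissible. For such an $M$ one has $\nu_1(M) = \lambda$, so the conditions ``$m_{13}>0 \Rightarrow \nu_1^2(M) = b = 0$'' and ``$m_{24}>0 \Rightarrow \nu_1^3(M) = c = 0$'' both fail under $b,c \geq 2$. If $m_{14} > 0$, these first two constraints already force $m_{13} = m_{24} = 0$, whence $\nu_2(M) = \lambda$ and the requirement $\nu_2^2(M) = \nu_2^3(M) = 0$ becomes $b = c = 0$, again contradicting genericity. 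Hence only $M = 0$ survives, giving $\mu = \lambda$ with contribution $1$, and the combinatorial identity follows.
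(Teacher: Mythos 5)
Your proof is correct and follows essentially the same route as the paper: the same bijection between pairs $(J,L)$ with $L\in\mcL_{\lambda-\Sigma_J}^a(\mu)$ and pairs $(M,J)$ with $J\subset S_M$, the same degree bookkeeping $\deg M=\deg L+|J|$, and the same reduction to showing that no nonzero $M\in\mcL_\lambda^a(\mu)$ is supported entirely on $\Phi^{\geq 3}$. Where the paper invokes $\mcL_\lambda^a(\mu)=\bigcup_{\alpha\in\Phi^2}\mcL_\lambda^a(\mu)^{\{\alpha\}}$ and inclusion--exclusion (asserting the union ``by definition of admissibility''), you instead swap the sum and use the alternating-sum vanishing $\sum_{J\subset S_M}(-1)^{|J|}=[S_M=\emptyset]$; this is the same combinatorial content, but your explicit verification that $b\geq 2$, $c\geq 2$ force $m_{13}=m_{24}=m_{14}=0$ when $M$ has empty $\Phi^2$-support makes the role of the genericity hypothesis clearer than the paper's terse phrasing.
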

\begin{proof}
The conditions imposed on $a,b,c$ and $d$, are equivalent to asking that $\lambda - \Sigma_J\in X^+$ for all $J\subset \Phi^2$. Therefore, \cref{generic in type A4} follows easily from \Cref{definition pre-canonical basis}. \\
We now prove \cref{generic in type A4 hat }. We define elements $m_{\mu} \in \bbZ[v,v^{-1}]$ by the equation
\begin{equation*}
    \displaystyle \sum_{J\subset \Phi^2} (-q)^{|J|} \hat{\bfN}_{\lambda-\Sigma_{J}}^3 = \sum_{\mu \in X^+} m_\mu \bfN_{\mu}^2. 
\end{equation*}  
By \cref{defi N3 hat in type A4} it is clear that $m_\lambda =1$ and  that $m_\mu =0$ if $\mu \not \preccurlyeq_2 \lambda $. We fix $\mu \preccurlyeq_2 \lambda$ with $\mu \neq \lambda$. 

Assume that  $\mcL_{\lambda}^a(\mu)\neq \emptyset$. Let $J\subset \Phi^2$. Given  $L =(l_{ij})\in \mcL_{\lambda -\Sigma_J}(\mu)  $ we define $L'=(l'_{ij})\in \mcL_{\lambda }(\mu)$ as $l'_{ij} \coloneqq l_{ij}+ 1 $ if $\alpha_{ij}\in J$, and $l'_{ij} \coloneqq l_{ij}$, otherwise.
This defines a map $F:\mcL_{\lambda -\Sigma_J}(\mu) \mapsto \mcL_{\lambda }(\mu)$.  Moreover, as $J\subset \Phi^2$ one has that $\deg (L')=\deg (L)+|J|$. Notice that, since $J\subset \Phi^2$, we have that $\nu_k(L)=\nu_k(L')$ for $1\leq k\leq 3$ and that $l_{13}=l'_{13}$, $l_{24}=l'_{24}$ and $l_{14}=l'_{14}$. It follows that $F$ preserves admissible elements, i.e. it restricts to a map
$F:\mcL_{\lambda -\Sigma_J}^a(\mu) \mapsto \mcL_{\lambda }^a(\mu)$ that is clearly injective. The image $F(\mcL_{\lambda -\Sigma_J}^a(\mu))$  is the set
\begin{equation*}
    \mcL_{\lambda }^a(\mu)^J:=\{ L=(l_{ij})\in \mcL_{\lambda }^a(\mu)  \mid l_{ij}\neq 0 \mbox{ for all } \alpha_{ij}\in J  \}. 
\end{equation*}
From this it follows that \begin{equation}
\label{degL}
\sum_{L\in \mcL_{\lambda }^a(\mu)^{J}} q^{\deg(L)}=q^{|J|}r_{\lambda-\Sigma_J}(q).\end{equation}
On the other hand, by definition of admissibility we have
\begin{equation}\label{L}
    \mcL_{\lambda }^a(\mu) = \bigcup_{\alpha \in \Phi^2}  \mcL_{\lambda }^a(\mu)^{\{\alpha\}}.
\end{equation}
If in \cref{L} one takes the $q$-graded degree on both sides we obtain by \cref{degL} and the inclusion-exclusion principle: 


\begin{equation}\label{generic A4 AA}
0  =    \sum_{J\subset \Phi^2} (-q)^{|J|} r_{\lambda -\Sigma_J} (\mu). 
\end{equation}
(We use here that $\mcL_{\lambda }^a(\mu)^{J_1}\cap \mcL_{\lambda }^a(\mu)^{J_2} =  \mcL_{\lambda }^a(\mu)^{J_1\cup J_2}$ for any $J_1, J_2\subset \Phi^2$.)

\noindent
The right-hand side of \cref{generic A4 AA} is equal to $m_\mu$. Therefore, we have shown that $m_\mu =0$ if $\mcL_{\lambda}^a(\mu)\neq \emptyset$.

We now assume that $\mcL_{\lambda}^a(\mu)= \emptyset$. In this case we have $\mcL_{\lambda -\Sigma_J}^a(\mu)= \emptyset$ for all $J\subset \Phi^2$ (recall that $\mcL_{\lambda -\Sigma_J}^a(\mu)$ injects in $\mcL_{\lambda}^a(\mu)$).  We conclude that  $m_\mu =0$ in this case as well. \\
Summing up, we have proved that $m_\mu = \delta_{\lambda,\mu}$ (where $\delta_{\lambda,\mu}$ is a Kronecker delta). By the definition of $m_\mu$ this is equivalent to \cref{generic in type A4 hat }.
\end{proof}

Before we prove the non-generic decomposition of $\bfN_{\lambda}^2$, we record
in the following five lemmas some useful computations which are later needed several times. The proofs of all these lemmas are routine computations using \Cref{lemma X equal to zero} and the definition of the relevant $\bfN$ and $\bfM$ elements. For this reason we omit the proofs.

\begin{lemma}\label{lemma a=0}
Let $\lambda = a\varpi_1 + b\varpi_2 + c\varpi_3 +d\varpi_4 \in X^+$. Suppose that $a=0$. Then
\begin{equation}\label{eq a=0}
    \bfN_{\lambda}^2= \bfN_{\lambda}^3-q\bfM_{\lambda-\alpha_{23}}^{\Pgeq{3}}-q\bfM_{\lambda-\alpha_{34}}^{\Pgeq{3}}  +q^2 \bfM_{\lambda-\alpha_{23}-\alpha_{34}}^{\Pgeq{3}}.
\end{equation}
\end{lemma}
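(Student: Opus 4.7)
The plan is to derive \cref{eq a=0} by peeling off the height-two root $\alpha_{12}$ and then applying \Cref{lemma X equal to zero}. First I would note that by construction $\bfN_\lambda^2 = \bfM_\lambda^{\Pgeq{2}}$. Set $A := \Pgeq{2}\setminus\{\alpha_{12}\} = \{\alpha_{23},\alpha_{34},\alpha_{13},\alpha_{24},\alpha_{14}\}$. Splitting the defining sum of $\bfM_\lambda^{\Pgeq{2}}$ according to whether $\alpha_{12}\in I$ yields
\[ \bfN_\lambda^2 \;=\; \bfM_\lambda^A \;-\; q\,\bfM_{\lambda-\alpha_{12}}^A. \]

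The heart of the argument is to show that the second summand vanishes under the hypothesis $a=0$. For that I would verify directly that $s_1(A)=A$: using $s_1(\alpha_1)=-\alpha_1$, $s_1(\alpha_2)=\alpha_1+\alpha_2$ and $s_1(\alpha_j)=\alpha_j$ for $j\geq 3$, one checks that $s_1$ interchanges $\alpha_{13}\leftrightarrow\alpha_{23}$ and $\alpha_{14}\leftrightarrow\alpha_{24}$ while fixing $\alpha_{34}$. Since $\alpha_{12}=\varpi_1+\varpi_2-\varpi_3$ in the fundamental-weight basis, the first coordinate of $\lambda-\alpha_{12}$ equals $a-1=-1$, so \Cref{lemma X equal to zero} applied with $k=1$ immediately yields $\bfM_{\lambda-\alpha_{12}}^A=0$.

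To conclude I would just expand $\bfM_\lambda^A$ by splitting $A=\Pgeq{3}\sqcup\{\alpha_{23},\alpha_{34}\}$:
\[ \bfM_\lambda^A \;=\; \sum_{K\subset\{\alpha_{23},\alpha_{34}\}} (-q)^{|K|}\,\bfM_{\lambda-\Sigma_K}^{\Pgeq{3}} \;=\; \bfN_\lambda^3 - q\,\bfM_{\lambda-\alpha_{23}}^{\Pgeq{3}} - q\,\bfM_{\lambda-\alpha_{34}}^{\Pgeq{3}} + q^2\,\bfM_{\lambda-\alpha_{23}-\alpha_{34}}^{\Pgeq{3}}, \]
after recognizing the $K=\emptyset$ term as $\bfN_\lambda^3=\bfM_\lambda^{\Pgeq{3}}$. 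Combined with the previous vanishing, this is exactly \cref{eq a=0}. I do not anticipate any substantial obstacle here: the single nontrivial input is spotting that $A=\Pgeq{2}\setminus\{\alpha_{12}\}$ is $s_1$-stable, and the overall argument fits the template (isolate a root, check $s_k$-stability of the remainder, invoke \Cref{lemma X equal to zero}) systematically exploited in Section \ref{upp}.
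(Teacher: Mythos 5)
Your argument is correct and is precisely the "routine computation using Proposition \ref{lemma X equal to zero}" that the paper states (but does not write out) for this lemma: peel off $\alpha_{12}$, verify $s_1$-stability of $A=\Pgeq{2}\setminus\{\alpha_{12}\}$, invoke Proposition \ref{lemma X equal to zero} with $k=1$ since $(\lambda-\alpha_{12})_1=-1$, and then expand $\bfM_\lambda^A$ over the two height-two roots $\alpha_{23},\alpha_{34}$. All the individual checks ($s_1(A)=A$ via $\alpha_{13}\leftrightarrow\alpha_{23}$, $\alpha_{14}\leftrightarrow\alpha_{24}$, $\alpha_{34}$ fixed; $\alpha_{12}=\varpi_1+\varpi_2-\varpi_3$; $\bfM_\lambda^{\Pgeq 3}=\bfN_\lambda^3$ for dominant $\lambda$) are accurate.
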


\begin{remark}
By definition, if $\lambda \in X^+$, we have $\bfN^k_\lambda=\bfM^{\Pgeq{k}}_\lambda$ for every $k$. To avoid confusion, we have defined $\bfN_\lambda^k$ only for $\lambda\in X^+$. On  the other hand, $\bfM^{\Pgeq{k}}_{\lambda}$ is defined for every $\lambda\in X$. This explains why in \cref{eq a=0} we cannot write, for example, $\bfN^3_{\lambda-\alpha_{23}}$ instead of $\bfM^{\Pgeq{3}}_{\lambda-\alpha_{23}}$. 
\end{remark}

\begin{lemma} \label{lemma c=0 new}
Let $\lambda = a\varpi_1 + b\varpi_2 + c\varpi_3 +d\varpi_4 \in X^+$ and suppose that $c=0$. Then
\begin{equation} \label{eq easy lemma new}
    \bfN_{\lambda}^2 = \bfN_{\lambda}^3 - q\bfMt_{\lambda - \alpha_{12}} -q^2\bfM_{\lambda-\alpha_{24} }^{\Pgeq{3}} +q^3\bfM_{\lambda -\alpha_{12} -\alpha_{24}}^{\Pgeq{3}}.
\end{equation}
\end{lemma}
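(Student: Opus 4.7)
The plan is to iterate the $s_k$-invariance trick of Proposition~\ref{lemma X equal to zero} with $k=3$, using the hypothesis $c=\lambda_3=0$ to kill or collapse terms in the naive expansion of $\bfN_\lambda^2$. The key auxiliary set is
\[
B := \Pgeq{3}\cup\{\alpha_{12}\} = \{\alpha_{12},\alpha_{13},\alpha_{14},\alpha_{24}\}.
\]
Since $s_3$ swaps $\alpha_{12}\leftrightarrow\alpha_{13}$ (because the $\varpi_3$-coordinate of $\alpha_{12}$ equals $-1$) while fixing $\alpha_{14}$ and $\alpha_{24}$ (whose $\varpi_3$-coordinates vanish), we have $s_3(B)=B$; moreover $\Pgeq{2}\setminus B=\{\alpha_{23},\alpha_{34}\}$, and both of these roots have $\varpi_3$-coordinate equal to $1$.

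First I would expand
\[
\bfN_\lambda^2 \;=\; \bfM_\lambda^{\Pgeq{2}} \;=\; \sum_{J\subset\{\alpha_{23},\alpha_{34}\}}(-q)^{|J|}\,\bfM^{B}_{\lambda-\Sigma_J}.
\]
The hypothesis $c=0$ forces the $\varpi_3$-coordinate of both $\lambda-\alpha_{23}$ and $\lambda-\alpha_{34}$ to equal $-1$, so the vanishing half of Proposition~\ref{lemma X equal to zero} (applied with the $s_3$-invariant set $B$) gives $\bfM^{B}_{\lambda-\alpha_{23}} = \bfM^{B}_{\lambda-\alpha_{34}} = 0$, and the display collapses to
\[
\bfN_\lambda^2 \;=\; \bfM^{B}_\lambda \;+\; q^{2}\,\bfM^{B}_{\lambda-\alpha_{23}-\alpha_{34}}.
\]
Next, for $\mu:=\lambda-\alpha_{23}-\alpha_{34}$ one has $\mu_3=-2$, so a short computation of the dot action gives $s_3\cdot\mu = \mu+\alpha_3 = \lambda-\alpha_{24}$ (using the elementary root identity $\alpha_{23}+\alpha_{34}-\alpha_3=\alpha_{24}$). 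A second application of Proposition~\ref{lemma X equal to zero} then converts this surviving term into $-\bfM^{B}_{\lambda-\alpha_{24}}$, yielding
\[
\bfN_\lambda^2 \;=\; \bfM^{B}_\lambda \;-\; q^{2}\,\bfM^{B}_{\lambda-\alpha_{24}}.
\]

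To finish, I would use the disjoint decomposition $B=\Pgeq{3}\sqcup\{\alpha_{12}\}$ to split each $\bfM^{B}$-term according to whether $\alpha_{12}$ appears in the summation index; this gives the general identity $\bfM_\nu^{B}=\bfMt_\nu - q\,\bfMt_{\nu-\alpha_{12}}$. Substituting with $\nu=\lambda$ and $\nu=\lambda-\alpha_{24}$, and recalling that $\bfMt_\lambda=\bfN_\lambda^3$, recovers exactly the four-term expression \cref{eq easy lemma new} claimed in the Lemma. I do not foresee a serious obstacle: once the intermediate set $B$ has been isolated, the argument reduces to two invocations of Proposition~\ref{lemma X equal to zero} and a bookkeeping expansion. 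The only genuine piece of insight is the observation that $\alpha_{24}$ is the $s_3$-dot-image of the weight $\lambda-\alpha_{23}-\alpha_{34}$; this is what mediates between the height-$2$ roots $\alpha_{23},\alpha_{34}$ appearing in $\bfN_\lambda^2$ and the height-$3$ root $\alpha_{24}$ appearing in the target formula.
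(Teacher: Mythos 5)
Your proof is correct, and it supplies the ``routine computation'' that the paper deliberately omits (the paper only remarks that Lemmas~5.11--5.15 ``follow using \Cref{lemma X equal to zero} and the definition of the relevant $\bfN$ and $\bfM$ elements''). Your choice of intermediate set $B=\Pgeq{2}\setminus\{\alpha_{23},\alpha_{34}\}=\Pgeq{3}\cup\{\alpha_{12}\}$ is exactly the right one: it is $s_3$-stable (since $s_3$ swaps $\alpha_{12}\leftrightarrow\alpha_{13}$ and fixes $\alpha_{14},\alpha_{24}$), the hypothesis $c=0$ makes the two height-two roots outside $B$ have $\varpi_3$-coefficient $1$, and the key identity $s_3\cdot(\lambda-\alpha_{23}-\alpha_{34})=\lambda-\alpha_{24}$ collapses the surviving corner term. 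The final split $\bfM^B_\nu=\bfMt_\nu-q\bfMt_{\nu-\alpha_{12}}$ is just the $\alpha_{12}$-inclusion/exclusion, and the computation closes. This is exactly the kind of argument the authors use for the explicitly proved rows of Table~\ref{tab:decom N2 in N3 for A4}, so there is no deviation in method, only in which $s_k$-stable set is singled out.
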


\begin{lemma} \label{lemma zeros in the middle new}
Let $\lambda = a\varpi_1  +d\varpi_4 \in X^+$. Then:
\[    \bfN_{\lambda}^2 = \bfN_{\lambda}^3 - q^2\bfMt_{\lambda-\alpha_{14}} +q \bfM_{\lambda - \alpha_{23}}^{\Pgeq{3}} -q^3\bfM_{\lambda - \alpha_{14}-\alpha_{23}}^{\Pgeq{3}}.\]
\end{lemma}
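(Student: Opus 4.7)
The proof begins with the expansion
\[\bfN_\lambda^2 = \sum_{J \subset \Phi^2} (-q)^{|J|} \bfM^{\Pgeq{3}}_{\lambda - \Sigma_J},\]
where $\Phi^2 = \{\alpha_{12}, \alpha_{23}, \alpha_{34}\}$, producing eight terms. The identity $\alpha_{12} + \alpha_{34} = \alpha_{14}$ immediately turns the $J = \{\alpha_{12}, \alpha_{34}\}$ and $J = \{\alpha_{12}, \alpha_{23}, \alpha_{34}\}$ summands into $q^2\bfM^{\Pgeq{3}}_{\lambda - \alpha_{14}}$ and $-q^3\bfM^{\Pgeq{3}}_{\lambda - \alpha_{14} - \alpha_{23}}$, already matching two of the four terms (up to signs absorbed by the cancellations below) on the right-hand side of the lemma.

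The remaining five summands are then simplified via two applications of \Cref{lemma X equal to zero}. The key observation is that the augmented sets
\[A := \Pgeq{3} \cup \{\alpha_{34}\} \qquad \text{and} \qquad A' := \Pgeq{3} \cup \{\alpha_{12}\}\]
satisfy $s_2(A) = A$ and $s_3(A') = A'$: indeed $s_2$ fixes $\alpha_{13}$ and $\alpha_{14}$ and swaps $\alpha_{24} \leftrightarrow \alpha_{34}$, and similarly for $s_3$ on $A'$. Because $\lambda = a\varpi_1 + d\varpi_4$ has second and third coordinates equal to $0$ in the fundamental weight basis, the weights $\lambda - \alpha_{12}$, $\lambda - \alpha_{34}$, and $\lambda - \alpha_{23}$ have respectively second coordinate $-1$, third coordinate $-1$, and both coordinates equal to $-1$. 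Applying \Cref{lemma X equal to zero} to $\bfM^A$ and $\bfM^{A'}$ at these three weights, and expanding via the recursion $\bfM^B_\mu = \bfM^{B\setminus\{\alpha\}}_\mu - q\bfM^{B\setminus\{\alpha\}}_{\mu - \alpha}$ while once again invoking $\alpha_{12} + \alpha_{34} = \alpha_{14}$, yields
\[\bfM^{\Pgeq{3}}_{\lambda - \alpha_{12}} = q\bfM^{\Pgeq{3}}_{\lambda - \alpha_{14}} = \bfM^{\Pgeq{3}}_{\lambda - \alpha_{34}}, \qquad \bfM^{\Pgeq{3}}_{\lambda - \alpha_{12} - \alpha_{23}} = q^{-1}\bfM^{\Pgeq{3}}_{\lambda - \alpha_{23}} = \bfM^{\Pgeq{3}}_{\lambda - \alpha_{23} - \alpha_{34}}.\]

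Plugging these relations into the eight-term expansion, the coefficient of $\bfM^{\Pgeq{3}}_{\lambda - \alpha_{14}}$ collects as $q^2 - q\cdot q - q\cdot q = -q^2$, while the coefficient of $\bfM^{\Pgeq{3}}_{\lambda - \alpha_{23}}$ collects as $-q + q^2\cdot q^{-1} + q^2\cdot q^{-1} = q$, giving exactly the formula claimed in the lemma. The only nontrivial step—and thus the main obstacle—is identifying the augmented $s_k$-stable sets $A$ and $A'$; once these are spotted, the remainder is a brief algebraic manipulation entirely in the spirit of the preceding lemmas.
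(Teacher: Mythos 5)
Your proof is correct, and it is precisely the ``routine computation using \Cref{lemma X equal to zero} and the definition of the relevant $\bfN$ and $\bfM$ elements'' that the authors explicitly say they omit. The key relations $\bfM^{\Pgeq{3}}_{\lambda-\alpha_{12}}=\bfM^{\Pgeq{3}}_{\lambda-\alpha_{34}}=q\bfM^{\Pgeq{3}}_{\lambda-\alpha_{14}}$ and $\bfM^{\Pgeq{3}}_{\lambda-\alpha_{12}-\alpha_{23}}=\bfM^{\Pgeq{3}}_{\lambda-\alpha_{23}-\alpha_{34}}=q^{-1}\bfM^{\Pgeq{3}}_{\lambda-\alpha_{23}}$ check out (e.g.\ $s_2$ does swap $\alpha_{24}\leftrightarrow\alpha_{34}$ and fix $\alpha_{13},\alpha_{14}$; $(\lambda-\alpha_{12})_2=-1$ and $(\lambda-\alpha_{23})_2=(\lambda-\alpha_{23})_3=-1$ since $\lambda_2=\lambda_3=0$), and the coefficient tally gives exactly the claimed formula.
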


\begin{lemma} \label{lemma zeros in the middle}
Let $\lambda = a\varpi_1  +d\varpi_4 \in X^+$.  Then, we have
\begin{equation*}
    \bfM_{\lambda - \alpha_{23}}^{\Pgeq{3} } = \left\{  \begin{array}{ll}
    0,& \mbox{if } \min(a,d)=0;\\
    -q^2\undH_{\lambda -\alpha_{14}} ,   & \mbox{if } \min(a,d)=1;  \\
    -q^2\undH_{\lambda -\alpha_{14}} +q^3\undH_{\lambda- 2\alpha_{14}}    ,& \mbox{otherwise. } 
    \end{array}  \right.
\end{equation*}
\end{lemma}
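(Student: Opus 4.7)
The plan is to unfold $\bfM_{\lambda-\alpha_{23}}^{\Pgeq{3}}$ by \Cref{M} into a sum over the eight subsets of $\Pgeq{3}=\{\alpha_{13},\alpha_{24},\alpha_{14}\}$, discard the summands that vanish by simple-root singularity, and then evaluate the remaining terms. Writing each root in the basis of fundamental weights
\[
\alpha_{23}=(-1,1,1,-1),\ \alpha_{13}=(1,0,1,-1),\ \alpha_{24}=(-1,1,0,1),\ \alpha_{14}=(1,0,0,1),
\]
and starting from $\lambda-\alpha_{23}=(a+1,-1,-1,d+1)$, a one-line case check shows that for six of the eight subsets $I\subset\Pgeq{3}$ the weight $\lambda-\alpha_{23}-\Sigma_{I}$ has either its second or its third coordinate equal to $-1$, hence $\HTIL_{\lambda-\alpha_{23}-\Sigma_{I}}=0$ by the singular case of Definition \ref{defi H tilde bar} (equivalently by \Cref{lemma preserve singularity and regularity}). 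Only $I=\{\alpha_{13},\alpha_{24}\}$ and $I=\Pgeq{3}$ survive, giving
\begin{equation*}
\bfM_{\lambda-\alpha_{23}}^{\Pgeq 3}=q^{2}\,\HTIL_{\mu_{1}}-q^{3}\,\HTIL_{\mu_{2}},\qquad
\mu_{1}=(a+1,-2,-2,d+1),\ \mu_{2}=(a,-2,-2,d).
\end{equation*}

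Next I would evaluate each $\HTIL_{\mu_{i}}$ by transporting to the $\epsilon$-basis. Setting the last $\epsilon$-coordinate to $0$, one finds that $\mu_{1}+\rho$ has (up to translation) $\epsilon$-coordinates $(a+d+2,d,d+1,d+2,0)$ and $\mu_{2}+\rho$ has $(a+d,d-1,d,d+1,0)$. A weight is regular iff its $\epsilon$-coordinates of $\cdot+\rho$ are pairwise distinct, which translates directly into: $\mu_{1}$ is regular iff $\min(a,d)\ge 1$ and $\mu_{2}$ is regular iff $\min(a,d)\ge 2$. In both regular cases the same permutation $w=(2,4)\in S_{5}$ sorts the coordinates, and $w=s_{2}s_{3}s_{2}$ has length $3$. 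Converting the sorted $\epsilon$-coordinates back to the fundamental-weight basis and subtracting $\rho$ yields
\[
\overline{\mu_{1}}=\lambda-\alpha_{14}\ (\text{when }\min(a,d)\ge 1),\qquad
\overline{\mu_{2}}=\lambda-2\alpha_{14}\ (\text{when }\min(a,d)\ge 2),
\]
so $\HTIL_{\mu_{1}}=-\undH_{\lambda-\alpha_{14}}$ and $\HTIL_{\mu_{2}}=-\undH_{\lambda-2\alpha_{14}}$ in their respective regularity ranges.

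Plugging these into the displayed reduction gives exactly the three cases of the statement: if $\min(a,d)=0$ both $\HTIL_{\mu_{i}}$ vanish, yielding $0$; if $\min(a,d)=1$ only the first term survives, giving $-q^{2}\undH_{\lambda-\alpha_{14}}$; and if $\min(a,d)\ge 2$ both contribute, giving $-q^{2}\undH_{\lambda-\alpha_{14}}+q^{3}\undH_{\lambda-2\alpha_{14}}$. The main bookkeeping obstacle is the regularity check for $\mu_{1}$ and $\mu_{2}$: one must verify that no \emph{non-simple} root of the form $\alpha_{jk}$ fixes $\mu_{i}+\rho$ in the assumed range, which is handled by noting that $\langle\mu_{i}+\rho,\alpha_{jk}^{\vee}\rangle$ equals the partial sum of the fundamental-weight coordinates of $\mu_{i}+\rho$ from index $j$ to $k$, and enumerating these six partial sums directly.
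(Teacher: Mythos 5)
Your proof is correct, and it fills in exactly the computation that the paper explicitly leaves to the reader (the authors only say that \Cref{lemma zeros in the middle} and its companions follow by ``routine computations using \Cref{lemma X equal to zero} and the definition of the relevant $\bfN$ and $\bfM$ elements''). Your bookkeeping checks out at every step: the fundamental-weight coordinates of $\alpha_{23},\alpha_{13},\alpha_{24},\alpha_{14}$ are right; six of the eight summands of $\bfM^{\Pgeq 3}_{\lambda-\alpha_{23}}$ do have a $-1$ in position $2$ or $3$ and hence vanish by the singular case of \Cref{defi H tilde bar}; the two survivors are $\mu_1=(a+1,-2,-2,d+1)$ and $\mu_2=(a,-2,-2,d)$; the $\epsilon$-coordinate regularity analysis gives exactly $\min(a,d)\ge 1$ resp.\ $\min(a,d)\ge 2$; the sorting permutation is the length-$3$ transposition $s_2s_3s_2$ in both cases; and $\overline{\mu_1}=\lambda-\alpha_{14}$, $\overline{\mu_2}=\lambda-2\alpha_{14}$, so the signs $(-1)^3$ combine with $(-q)^{|I|}$ to give the three displayed cases.

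The only stylistic difference from what the paper presumably had in mind is that you unfold over all eight subsets of $\Pgeq 3$ at once, whereas the method the paper advertises (via \Cref{lemma X equal to zero}) would first peel off $\alpha_{24}$: writing $D=\Pgeq 3\setminus\{\alpha_{24}\}=\{\alpha_{13},\alpha_{14}\}$, one has $s_2(D)=D$ and $(\lambda-\alpha_{23})_2=-1$, so $\bfM^D_{\lambda-\alpha_{23}}=0$ and hence $\bfM^{\Pgeq 3}_{\lambda-\alpha_{23}}=-q\,\bfM^D_{\lambda-\alpha_{23}-\alpha_{24}}$, which then unfolds to the same two surviving weights $\mu_1,\mu_2$. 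Either route kills the same six terms, so this is not a gap, just a different bookkeeping order. Your explicit comment about checking non-simple roots for the regularity of $\mu_1,\mu_2$ is the genuinely essential point, and you handle it correctly via the pairwise-distinctness criterion in the $\epsilon$-basis.
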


\begin{lemma} \label{lemma N3 zeros in the middle}
Let $\lambda = a\varpi_1  +d\varpi_4 \in X^+$. Then,
\begin{equation*}
    \bfN_{\lambda}^3 = \left\{  \begin{array}{rl}
       \undH_{\lambda} -q\undH_{\lambda - \alpha_{14}},   & \mbox{if } \min(a,d)>0;   \\
       \undH_{\lambda},  & \mbox{if } \min(a,d)=0. 
    \end{array}  \right.
\end{equation*}
\end{lemma}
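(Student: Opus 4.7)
My plan is to unravel the definition of $\bfN_\lambda^3$ directly in $A_4$. In that type, $\Pgeq{3}$ consists of exactly the three positive roots of height at least three, namely $\alpha_{13}$, $\alpha_{24}$ and $\alpha_{14}$, so by \Cref{definition pre-canonical basis} we have
\[
\bfN_\lambda^3 \;=\; \sum_{I \subset \{\alpha_{13},\alpha_{24},\alpha_{14}\}}(-q)^{|I|}\,\HTIL_{\lambda-\Sigma_I},
\]
a priori a sum of eight terms. The content of the lemma is that the six terms with $\alpha_{13}\in I$ or $\alpha_{24}\in I$ vanish.

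First I would express the three roots in the basis of fundamental weights using the $A_4$ Cartan matrix, obtaining $\alpha_{13}=\varpi_1+\varpi_3-\varpi_4$, $\alpha_{24}=-\varpi_1+\varpi_2+\varpi_4$ and $\alpha_{14}=\varpi_1+\varpi_4$. Writing $\lambda=(a,0,0,d)$ in the fundamental basis, a quick tabulation then shows the following: whenever $\alpha_{13}\in I$ the third coordinate of $\lambda-\Sigma_I$ equals $-1$, and whenever $\alpha_{24}\in I$ the second coordinate of $\lambda-\Sigma_I$ equals $-1$ (these are independent of the presence of the other roots in $I$ because $\alpha_{14}$ has zero second and third coordinate). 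In either case $\lambda-\Sigma_I$ is fixed under the dot action by a simple reflection, hence singular, so by \Cref{defi H tilde bar} the corresponding $\HTIL$ vanishes.

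Only two terms survive: $\HTIL_\lambda$ and $-q\,\HTIL_{\lambda-\alpha_{14}}$. Since $\lambda$ is dominant, $\HTIL_\lambda=\undH_\lambda$. For the other term, $\lambda-\alpha_{14}=(a-1)\varpi_1+(d-1)\varpi_4$: if $\min(a,d)\geq 1$ this weight is dominant, whence $\HTIL_{\lambda-\alpha_{14}}=\undH_{\lambda-\alpha_{14}}$ and one obtains the first formula; if $\min(a,d)=0$ then either the first or the fourth coordinate of $\lambda-\alpha_{14}$ equals $-1$, so $\lambda-\alpha_{14}$ is singular and $\HTIL_{\lambda-\alpha_{14}}=0$, giving $\bfN_\lambda^3=\undH_\lambda$ as required.

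The only thing to check with any care is the singularity claim for the six vanishing terms, but this is immediate from the expansions of $\alpha_{13}$, $\alpha_{24}$ and $\alpha_{14}$ recorded above. The rest is a direct application of the definitions.
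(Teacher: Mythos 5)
Your proof is correct and matches the approach the paper indicates: the paper omits the proof of this lemma, saying only that it follows from ``routine computations using \Cref{lemma X equal to zero} and the definition of the relevant $\bfN$ and $\bfM$ elements,'' and the singularity mechanism invoked in \Cref{lemma X equal to zero} is exactly what you check term by term. Your direct enumeration of the eight subsets of $\Pgeq{3}$, the observation that $\alpha_{13}$ (resp.\ $\alpha_{24}$) is the only height-$\geq 3$ root with nonzero $\varpi_3$- (resp.\ $\varpi_2$-) coefficient, and the final case split on $\min(a,d)$ are all accurate, so this is a valid filling-in of the omitted argument rather than a different route.
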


\begin{prop}  \label{Propo N2 in N3 without hat}
   Let  $\lambda = a\varpi_1 + b\varpi_2 + c\varpi_3 +d\varpi_4 \in X^+$. In type $\tilde{A}_4$ the non-generic decomposition for $\bfN^2_{\lambda}$ in terms of $\{\bfN^3_{\mu} \mid  \mu \in X^+   \}$ is given in \Cref{tab:decom N2 in N3 for A4}.	  
\end{prop}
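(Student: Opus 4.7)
The strategy mirrors the proof in type $\tilde A_3$: a case analysis driven by the $s_k$-symmetry of \Cref{lemma X equal to zero}, together with the already-established building blocks in Lemmas \ref{lemma a=0}--\ref{lemma N3 zeros in the middle}. The starting point is the factorisation
\[\bfN^2_\lambda \;=\; \sum_{I'\subset \Phi^2}(-q)^{|I'|}\,\bfM^{\Pgeq 3}_{\lambda-\Sigma_{I'}},\qquad \Phi^2=\{\alpha_{12},\alpha_{23},\alpha_{34}\},\]
obtained from the decomposition $\Pgeq 2 = \Phi^2 \sqcup \Pgeq 3$. When $\lambda-\Sigma_{I'}$ is dominant each summand is $\bfN^3_{\lambda-\Sigma_{I'}}$ and one recovers the generic decomposition of \Cref{Propo generic case A4}. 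The non-generic rows of \Cref{tab:decom N2 in N3 for A4} correspond precisely to those $\lambda$ for which some $\lambda-\Sigma_{I'}$ leaves the dominant chamber.

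I would proceed row by row. In each row one or more of the coordinates $a,b,c,d$ are small, and the offending arguments $\lambda-\Sigma_{I'}$ land on a wall with some coordinate equal to $-1$. Invoking \Cref{lemma X equal to zero} on the $s_k$-stable subsets of $\Pgeq 3$ (namely those avoiding roots whose support contains $\alpha_k$) kills most of these $\bfM^{\Pgeq 3}_{\lambda-\Sigma_{I'}}$. The remaining non-dominant arguments have the shape treated in Lemmas \ref{lemma zeros in the middle new}--\ref{lemma N3 zeros in the middle}, which either vanish or collapse onto a single $\undH_\mu$ plus an $\alpha_{14}$-shift; substituting these values back and collecting produces the announced linear combination of $\bfN^3_\mu$'s.

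The rows with a single vanishing coordinate reduce directly to \Cref{lemma a=0} (for $a=0$) or \Cref{lemma c=0 new} (for $c=0$), and the Dynkin diagram automorphism $\alpha_i\leftrightarrow \alpha_{5-i}$ handles the symmetric cases $d=0$ and $b=0$ for free. The main obstacle is the rows in which two coordinates vanish simultaneously, most notably $b=c=0$, where \Cref{lemma zeros in the middle new} together with the auxiliary simplifications of \Cref{lemma zeros in the middle} and \Cref{lemma N3 zeros in the middle} must be combined to produce a delicate $\alpha_{14}$-cancellation that is invisible in the generic formula. Once the symmetry is exploited to halve the work and the lemmas are invoked in the correct sequence, each table entry becomes a short, mechanical identity, and \Cref{Propo N2 in N3 without hat} follows.
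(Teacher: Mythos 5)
Your proposal is correct and matches the paper's strategy: both start from the factorisation over $\Phi^2$-subsets, reduce the case count via the Dynkin symmetry $\alpha_i\leftrightarrow\alpha_{5-i}$, kill non-dominant terms by applying \Cref{lemma X equal to zero} to $s_k$-stable subsets of $\Pgeq{3}$, and resolve the $b=c=0$ rows with the $\alpha_{14}$-shift identities of Lemmas \ref{lemma zeros in the middle new}--\ref{lemma N3 zeros in the middle}. The paper just makes the stable subsets explicit ($B$, $C$, $D$) and grinds through the seventeen rows one at a time.
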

\begin{table}[h!]
    \centering
   
    \resizebox{\linewidth}{!}{
    $\begin{array}{||N||c|c|c|c||l||}\hline 
\multicolumn{1}{|c||}{\mbox{Row}} &    a     &    b     &    c    &    d      &  \qquad \qquad \qquad  \mbox{Decomposition}           \\ \hline  \hline
      \label{0110}     & 0        &  \geq 1  & \geq 1  &  0        &   \bfN_{\lambda}^3 -q \bfN_{\lambda - \alpha_{23}}^3         \\ \hline 
     \label{0200}     & 0        &  \geq 2  & 0       &  0        &  \bfN_{\lambda}^3 - q^2\bfN_{\lambda -\alpha_{12}-\alpha_{23} }^3           \\ \hline 
      \label{0121}     & 0        &  \geq 1  & \geq 2  &  \geq 1   &   \bfN_{\lambda}^3 - q\bfN_{\lambda -\alpha_{23}}^3 - q \bfN_{\lambda -\alpha_{34}}^3 +q^2\bfN_{\lambda -\alpha_{23}-\alpha_{34}}^3        \\ \hline  
      \label{0111}     & 0  	   &  \geq 1  & 1       &  \geq 1   &   \bfN_{\lambda}^3 -q\bfN_{\lambda - \alpha_{23}}^3-q\bfN_{\lambda - \alpha_{34}}^3 +q^3 \bfN_{\lambda -\alpha_{12}-\alpha_{23}-\alpha_{34}}^3             \\ \hline 
      \label{0201}     & 0   	   &  \geq 2  & 0       &  \geq 1   &  \bfN_{\lambda}^3 -q^2\bfN_{\lambda -\alpha_{24} }^3-q^2\bfN_{\lambda -\alpha_{12}-\alpha_{23} }^3  + q^3\bfN_{\lambda -\alpha_{12}-\alpha_{24} }^3          \\ \hline 
     \label{0102}     & 0        &   1      & 0       &  \geq 2   & \bfN_{\lambda}^3-q^2\bfN_{\lambda-\alpha_{24}}^3  +q^4\bfN_{\lambda -\alpha_{24} -\alpha_{14}}^3            \\ \hline 
     \label{0101}     & 0        &   1      & 0       &  1        &     \bfN_{\lambda}^3-q^2\bfN_{\lambda-\alpha_{24}}^3          \\ \hline 
      \label{0100}     & 0        &   1      & 0       &  0        &     \bfN_{\lambda}^3           \\ \hline 
     \label{0011}     & 0        &   0      & \geq 1  &  \geq 1   &  \bfN_{\lambda}^3  -q\bfN_{\lambda - \alpha_{34}}^3       \\ \hline  
     \label{0000}     & 0        &   0      & 0       &  \geq 0   &     \bfN_{\lambda}^3          \\ \hline 
      \label{1121}    & \geq 1   &   1      & \geq 2  &  \geq 1   &   \bfN_{\lambda}^3 -q\bfN_{\lambda - \alpha_{12}}^3 -q\bfN_{\lambda - \alpha_{23}}^3 -q\bfN_{\lambda -\alpha_{34}}^3 +q^2\bfN_{\lambda -\alpha_{12}-\alpha_{34}}^3 +q^2\bfN_{\lambda  -\alpha_{23} -\alpha_{34}}^3       \\ \hline  
      \label{1201}    & \geq 1   &  \geq 2  & 0       &  \geq 1   &  \bfN_{\lambda}^3 -q\bfN_{\lambda - \alpha_{12}}^3 -q^2\bfN_{\lambda- \alpha_{24} }^3 + q^3  \bfN_{\lambda- \alpha_{12}- \alpha_{24} }^3    \\ \hline 
      \label{1101}    & \geq 1   &   1      & 0       &  1        &  \bfN_{\lambda}^3 -q\bfN_{\lambda - \alpha_{12}}^3 -q^2 \bfN_{\lambda - \alpha_{24}}^3       \\ \hline 
      \label{1102}    & \geq 1   &   1      & 0       &  \geq 2   &  \bfN_{\lambda}^3 -q\bfN_{\lambda - \alpha_{12}}^3 -q^2  \bfN_{\lambda - \alpha_{24}}^3 +q^4   \bfN_{\lambda -\alpha_{14} - \alpha_{24}}^3        \\ \hline 
      \label{1001}    & \geq 1   &   0      & 0       &  1   &   \bfN_{\lambda}^3 -(q^2+q^3) \bfN_{\lambda -\alpha_{14}}^3          \\ \hline  
    \label{2002}    & \geq 2   &   0      & 0       &  \geq 2   &  \bfN_{\lambda}^3 -(q^2+q^3) \bfN_{\lambda -\alpha_{14}}^3  +q^5\bfN_{\lambda - 2\alpha_{14} }^3           \\ \hline
       \label{1111}    & \geq 1   &  1       & 1       &  \geq 1   &   \bfN_{\lambda}^3 -q\bfN_{\lambda - \alpha_{12}}^3 -q\bfN_{\lambda - \alpha_{23}}^3 -q\bfN_{\lambda -\alpha_{34}}^3 +q^2 \bfN_{\lambda -\alpha_{12} -\alpha_{34} }^3  +q^3  \bfN_{\lambda -\alpha_{12}-\alpha_{23}-\alpha_{34} }^3       \\ \hline 
  
      \label{0020}     & 0        &   0      & \geq 2  &  0        &  \bfN_{\lambda}^3  - q^2\bfN_{\lambda - \alpha_{23}-\alpha_{34}}^3        \\ \hline 
      \label{1210}    & \geq 1   &  \geq 2  & \geq 1  &  0        &   \bfN_{\lambda}^3 - q\bfN_{\lambda -\alpha_{23}}^3 - q \bfN_{\lambda -\alpha_{12}}^3 +q^2\bfN_{\lambda -\alpha_{12}-\alpha_{23}}^3         \\ \hline 
      \label{1110}    & \geq 1   &   1      & \geq 1  &  0        &  \bfN_{\lambda}^3 -q\bfN_{\lambda - \alpha_{23}}^3-q\bfN_{\lambda - \alpha_{12}}^3 +q^3 \bfN_{\lambda -\alpha_{12}-\alpha_{23}-\alpha_{34}}^3           \\ \hline
        \label{1020}    & \geq 1   &   0      & \geq 2  &  0        &  \bfN_{\lambda}^3 -q^2\bfN_{\lambda -\alpha_{13} }^3-q^2\bfN_{\lambda -\alpha_{23}-\alpha_{34} }^3  + q^3\bfN_{\lambda -\alpha_{34}-\alpha_{13} }^3         \\ \hline 
            \label{2010}    & \geq 2   &   0      & 1       &  0        &   \bfN_{\lambda}^3 -q^2\bfN_{\lambda - \alpha_{13}}^3 +q^4 \bfN_{\lambda - \alpha_{13} -\alpha_{14}}^3         \\ \hline 
        \label{1010}    &  1       &   0      & 1       &  0        &  \bfN_{\lambda}^3 -q^2\bfN_{\lambda - \alpha_{13}}^3                 \\ \hline 
        \label{0010}     & 0        &   0      & 1       &  0        &     \bfN_{\lambda}^3               \\ \hline 

         \label{1100}    & \geq 1   &  \geq 1  & 0       &  0        &  \bfN_{\lambda}^3 -q\bfN_{\lambda - \alpha_{12}}^3        \\ \hline 
      \label{1000}    & \geq 1   &   0      & 0       &  0   &   \bfN_{\lambda}^3         \\ \hline      
    \label{1211}    & \geq 1   &  \geq 2  & 1       &  \geq 1   &  \bfN_{\lambda}^3 -q\bfN_{\lambda - \alpha_{12}}^3 -q\bfN_{\lambda - \alpha_{23}}^3 -q\bfN_{\lambda -\alpha_{34}}^3 +q^2 \bfN_{\lambda -\alpha_{12} -\alpha_{34} }^3  +q^2  \bfN_{\lambda -\alpha_{12}-\alpha_{23} }^3           \\ \hline 
    \label{1021}    & \geq 1   &   0      & \geq 2  &  \geq 1   &  \bfN_{\lambda}^3 -q\bfN_{\lambda - \alpha_{34}}^3 -q^2  \bfN_{\lambda - \alpha_{13}}^3  +q^3 \bfN_{\lambda - \alpha_{13}-\alpha_{34} }^3         \\ \hline    
    \label{1011}    &   1      &   0      &  1      &  \geq 1   & \bfN_{\lambda}^3 -q\bfN_{\lambda - \alpha_{34}}^3 -q^2  \bfN_{\lambda - \alpha_{13}}^3      \\ \hline
     \label{2011}    & \geq 2   &   0      &  1      &  \geq 1   & \bfN_{\lambda}^3 -q\bfN_{\lambda - \alpha_{34}}^3 -q^2  \bfN_{\lambda - \alpha_{13}}^3  +q^4 \bfN_{\lambda - \alpha_{13}-\alpha_{14} }^3       \\ \hline  
    \label{1001bis}    &  1       &   0      & 0       &  \geq 1   &  \bfN_{\lambda}^3 -(q^2+q^3) \bfN_{\lambda -\alpha_{14}}^3            \\ \hline

       \end{array}$
      }
  
    \caption{Decomposition of $\bfN^2_{\lambda} $ in terms of $\{\bfN_{\mu}^3 \mid \mu \in X^+\} $ for $\tilde{A}_4$. }
    \label{tab:decom N2 in N3 for A4}
\end{table}

\begin{proof}
The proof follows by a  case-by-case analysis. Notice that Rows 18-31 are symmetric to Rows 2-15. So we need to consider only the first 17 rows.
We will make use of the following sets: \[B:=\Pgeq{2}\setminus \{ \alpha_{12},\alpha_{34}\},\; C:=\Pgeq{3}\setminus \{\alpha_{13}\}\ \mathrm{and }\;D:=\Pgeq{3} \setminus \{\alpha_{24}\}.\]
Notice that $B=s_1(B)=s_4(B)$, $C=s_1(C)=s_3(C)$ and $D=s_2(D)=s_4(D)$.
\begin{enumerate}[label=\textbf{R\arabic{enumi}.},ref=1.\arabic{enumi}, wide=0pt]

\item[\textbf{R\ref{0110}.}]   We have $\bfN_{\lambda }^2 = \bfM_{\lambda}^B -q \bfM_{\lambda - \alpha_{12}}^B -q \bfM_{\lambda - \alpha_{34}}^B +q^2\bfM_{\lambda - \alpha_{12}-\alpha_{34}}^B $.
Since $s_1(B)=s_4(B)=B$ and  $a=d=0$ we can use \Cref{lemma X equal to zero} to conclude that	$\bfM_{\lambda - \alpha_{12}}^B = \bfM_{\lambda - \alpha_{34}}^B =\bfM_{\lambda - \alpha_{12}-\alpha_{34}}^B =0$. Therefore, 
$\bfN_{\lambda }^2 = \bfM_{\lambda}^B  = \bfN_{\lambda}^3 - q\bfN_{\lambda - \alpha_{23}}^3$.
\item[\textbf{R\ref{0200}.}]  Arguing as in the proof of Row \ref{0110} we arrive to 
$\bfN_{\lambda }^2 = \bfM_{\lambda}^B  = \bfN_{\lambda}^3 - q\bfM_{\lambda - \alpha_{23}}^{\Pgeq{3}}$. Using set $C$ to decompose the elements $\bfM_{\lambda - \alpha_{23}}^{\Pgeq{3}}$ and $ \bfN_{\lambda - \alpha_{12}-\alpha_{23}}^3$ and applying \Cref{lemma X equal to zero} we obtain $\bfM_{\lambda - \alpha_{23}}^{\Pgeq{3}} = q\bfN_{\lambda - \alpha_{12}-\alpha_{23}}^3$. Therefore, $\bfN_{\lambda }^2   = \bfN_{\lambda}^3 - q^2\bfN_{\lambda - \alpha_{12}-\alpha_{23}}^3$.

\item[\textbf{R\ref{0121}.}]
This case is a direct consequence of \Cref{lemma a=0}. 

\item[\textbf{R\ref{0111}.}] By \Cref{lemma a=0} we have
\begin{equation} \label{eq decom A4 case three}
	\bfN_{\lambda}^2 =\bfN_{\lambda}^3 - q\bfN_{\lambda -\alpha_{23}}^3 - q \bfN_{\lambda -\alpha_{34}}^3 +q^2\bfM_{\lambda -\alpha_{23}-\alpha_{34}}^{\Pgeq{3}}. 
\end{equation}
We have $s_3(C)=C$, thus \Cref{lemma X equal to zero} implies  that $\bfM_{\lambda -\alpha_{23}-\alpha_{34}}^{\Pgeq{3}}  = q\bfN_{\lambda -\alpha_{12}- \alpha_{23}-\alpha_{34}}^3$. By plugging this in \cref{eq decom A4 case three} we obtain the desired decomposition. 
\item[\textbf{R\ref{0201}.}] \Cref{lemma a=0} implies
\begin{equation} \label{eq decom A4 case four A}
	\bfN_{\lambda}^2 =\bfN_{\lambda}^3 - q\bfM_{\lambda -\alpha_{23}}^{\Pgeq{3}} - q \bfM_{\lambda -\alpha_{34}}^{\Pgeq{3}} +q^2\bfM_{\lambda -\alpha_{23}-\alpha_{34}}^{\Pgeq{3}}. 
\end{equation}
Arguing as in the previous case, using the fact that $s_1(C)=s_3(C)=C$, we obtain
\begin{equation} \label{eq decom A4 case four B}
\bfM_{\lambda -\alpha_{23}}^{\Pgeq{3}} =q\bfN_{\lambda -\alpha_{12}-\alpha_{23}}^3, \quad  \bfM_{\lambda -\alpha_{34}}^{\Pgeq{3}}=0 ,\quad 	\bfM_{\lambda -\alpha_{23}-\alpha_{34}}^{\Pgeq{3}} = - \bfN_{\lambda - \alpha_{24}}^3 +q \bfN_{\lambda -\alpha_{12}-\alpha_{24}}^3. 
\end{equation}
Then plugging \cref{eq decom A4 case four B} in \cref{eq decom A4 case four A} we obtain the desired decomposition. 

\item[\textbf{R\ref{0102}.}]  This case is similar to Row \ref{0201}. 
\item[\textbf{R\ref{0101}, R\ref{0100}.}] These rows provide a specific value for $\lambda$. Therefore, they follow by a direct computation. 

\item[\textbf{R\ref{0011}.}] By \Cref{lemma a=0} we have
\begin{equation} \label{eq row 13 and 15}
	\bfN_{\lambda}^2 = \bfN_{\lambda }^3 - q\bfN_{\lambda - \alpha_{34}}^3 -q\bfM_{\lambda -\alpha_{23} }^{\Pgeq{3}} +q^2\bfM_{\lambda -\alpha_{23}-\alpha_{34}}^{\Pgeq{3}}.  
\end{equation}  
We use the set $D$ to expand the $\bfM$-elements above. Since $s_2(D)=D$, \Cref{lemma X equal to zero} shows that \cref{eq row 13 and 15} reduces to the decomposition predicted by the table. 
\item[\textbf{R\ref{0000}.}]  Arguing as in the proof of Row \ref{0011} we obtain 
$	\bfN_{\lambda}^2 = \bfN_{\lambda }^3 - q\bfM_{\lambda - \alpha_{34}}^{\Pgeq{3}}$ .  
Using \Cref{lemma X equal to zero} and the fact that $s_1(C)=s_3(C)=C$ we obtain
$
\bfM_{\lambda - \alpha_{34}}^{\Pgeq{3}} = 	\bfM_{\lambda - \alpha_{34}}^C -q \bfM_{\lambda - \alpha_{34}-\alpha_{13}}^C =0
$.
\item[\textbf{R\ref{1121}.}] By definition of $\bfN_{\lambda}^2$ we have
\begin{equation*}
\bfN_{\lambda}^2  = \bfN_{\lambda}^3 -q\bfN_{\lambda - \alpha_{12}}^3 -q\bfN_{\lambda - \alpha_{23}}^3 -q\bfN_{\lambda -\alpha_{34}}^3 +q^2\bfN_{\lambda -\alpha_{12}-\alpha_{34}}^3 +q^2\bfN_{\lambda  -\alpha_{23} -\alpha_{34}}^3 + q^2Z,
\end{equation*}
where $Z= \bfM_{\lambda -\alpha_{12}-\alpha_{23} }^{\Pgeq{3}} -  q\bfN_{\lambda - \alpha_{12}-\alpha_{23}-\alpha_{34}}^3 $. Using the set $D$ to further decompose $Z$,  since $s_2(D)=D$ we obtain $Z=0$. 

\item[\textbf{R\ref{1201}.}]  This is a direct consequence of \Cref{lemma c=0 new}.
\item[\textbf{R\ref{1101}.}] Using \Cref{lemma c=0 new}, it remains to show that $\bfM_{\lambda - \alpha_{12}-\alpha_{24}}^{\Pgeq{3}}=0$. This follows after decomposing it using $D$, since $s_2(D)=s_4(D)=D$.
\item[\textbf{R\ref{1102}.}] Using \Cref{lemma c=0 new}, it remains to show that $\bfM_{\lambda - \alpha_{12}-\alpha_{24}}^{\Pgeq{3}}=q\bfN^3_{\lambda-\alpha_{14}-\alpha_{24}}$. This follows after decomposing both sides using $D$, since $s_2(D)=D$.
 \item[\textbf{R\ref{1001}.}]  Using \Cref{lemma zeros in the middle new} and \Cref{lemma zeros in the middle} we obtain
$ \bfN_{\lambda}^2 =\bfN_{\lambda}^3 - q^2\bfN_{\lambda-\alpha_{14}}^3 -q^3 \undH_{\lambda - \alpha_{14}}$.
We conclude by noticing that \Cref{lemma N3 zeros in the middle} implies that $\bfN_{\lambda -\alpha_{14}}^3 = \undH_{\lambda - \alpha_{14}}$.


\item[\textbf{R\ref{2002}.}]  Let us first assume that $\min (a,d)>2$. Arguing as in Row \ref{1001}, by \Cref{lemma zeros in the middle new,lemma zeros in the middle} we obtain
\begin{equation} \label{eq row 31}
    \bfN_{\lambda}^2 = \bfN_{\lambda}^3 - q^2\bfN_{\lambda-\alpha_{14}}^3 -q^3\undH_{\lambda - \alpha_{14}} +q^4\undH_{\lambda - 2\alpha_{14}} +q^5 \undH_{\lambda - 2\alpha_{14}} -q^6\undH_{\lambda - 3\alpha_{14}}.
\end{equation}
Then, \Cref{lemma N3 zeros in the middle} implies that $\bfN_{\lambda}^2 = \bfN_{\lambda}^3 -( q^2+q^3)\bfN_{\lambda-\alpha_{14}}^3 +q^5\bfN_{\lambda - 2\alpha_{14}}^3$, which is the desired decomposition. Assume now  $\min (a,d)=2$. In this case, the term $q^6\undH_{\lambda - 3\alpha_{14}}$ does not appear in  \cref{eq row 31}. However, Lemma  \ref{lemma N3 zeros in the middle} also implies the desired decomposition in this case. 
\item[\textbf{R\ref{1111}.}] By definition of $\bfN_{\lambda}^2$ we have
\begin{equation} \label{eq row 16}
    \bfN_{\lambda}^2 = \bfN_{\lambda}^3 -q\bfN_{\lambda -\alpha_{12}}^3 -q\bfN_{\lambda -\alpha_{23}}^3- q\bfN_{\lambda- \alpha_{34}}^3 + q^2\bfN_{\lambda -\alpha_{12}-\alpha_{34} }^3-q^3\bfN_{\lambda-\alpha_{12}-\alpha_{23}-\alpha_{34}}^3 + q^2Y, 
\end{equation}
where $Y = \bfM^{\Pgeq{3}}_{\lambda-\alpha_{12}-\alpha_{23}} + \bfM^{\Pgeq{3}}_{\lambda-\alpha_{23}-\alpha_{34}} $. Using the set $D$, we obtain
$\bfM^{\Pgeq{3}}_{\lambda-\alpha_{12}-\alpha_{23}}=q\bfN^{3}_{\lambda-\alpha_{12}-\alpha_{23}-\alpha_{34}}$.  By symmetry, we also have $\bfM^{\Pgeq{3}}_{\lambda-\alpha_{23}-\alpha_{34}}=q\bfN^3_{\lambda-\alpha_{12}-\alpha_{23}-\alpha_{34}}$. 
Therefore, $Y=2q\bfN_{\lambda-\alpha_{12}-\alpha_{23}-\alpha_{34}}^3 $ and \cref{eq row 16} reduces to the desired decomposition. 

\end{enumerate}
Having checked all 17 cases, the proof is complete.
\end{proof}

We now move to proving ``hat'' version of the proposition above. As before, we start by recording in the following Lemmas some useful computations. Both lemmas  follow using the definition of admissibility.  We leave the proofs to the reader.

\begin{lemma} \label{lemma N3 en N2 two zeros in the middle}
Let $\lambda = a\varpi_1  +d\varpi_4 \in X^+$. Then, 
\begin{equation*}
    \hat{\bfN}_\lambda^3 = \displaystyle \sum_{j=0}^{\min(a,d)} \left( q^{2j}\sum_{k=0}^jq^k \right) \bfN_{\lambda -j\alpha_{14}}^2.
\end{equation*}
\end{lemma}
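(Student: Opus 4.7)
The plan is to directly unwind the definition of $\hat{\bfN}_\lambda^3$ by classifying the admissible $L = (l_{ij}) \in \mcL_\lambda^a(\mu)$ for $\lambda = a\varpi_1 + d\varpi_4$. First I would express each positive root $\alpha_{ij}$ of height $\geq 2$ in the basis of fundamental weights and read off that the coefficients of $\varpi_2$ and $\varpi_3$ in $\nu_1(L) = \lambda - l_{12}\alpha_{12} - l_{23}\alpha_{23} - l_{34}\alpha_{34}$ are $-l_{12} - l_{23} + l_{34}$ and $l_{12} - l_{23} - l_{34}$, respectively. Adding these quantities shows that the dominance condition $\nu_1(L) \in X^+$ forces $l_{23} = 0$ and $l_{12} = l_{34}$. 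Setting $t := l_{12} = l_{34}$, we obtain $\nu_1(L) = (a-t)\varpi_1 + (d-t)\varpi_4$.

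Next, a similar computation shows that the coefficients of $\varpi_2$ and $\varpi_3$ in $\nu_2(L)$ equal $-l_{24}$ and $-l_{13}$, so dominance of $\nu_2(L)$ forces $l_{13} = l_{24} = 0$. Then $\nu_3(L) = (a-t-l_{14})\varpi_1 + (d-t-l_{14})\varpi_4$, which lies in $X^+$ precisely when $t + l_{14} \leq \min(a,d)$. The extra admissibility conditions on $\nu_1^2, \nu_1^3, \nu_2^2, \nu_2^3$ are now automatic, since all four of these coordinates vanish under the constraints just obtained.

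Using the identity $\alpha_{12} + \alpha_{34} = \alpha_{14}$, the weight $\mu = \lambda - \sum l_{ij}\alpha_{ij}$ depends on $L$ only through $j := t + l_{14}$, namely $\mu = \lambda - j\alpha_{14}$. Hence for each $j \in \{0, 1, \ldots, \min(a,d)\}$ the set $\mcL_\lambda^a(\lambda - j\alpha_{14})$ is in bijection with the $j+1$ pairs $(t, l_{14})$ satisfying $t + l_{14} = j$ and $t, l_{14} \geq 0$. A direct evaluation yields $\deg(L) = 2t + 3 l_{14} = 3j - t$, so
\[ r_\lambda(\lambda - j\alpha_{14}) = \sum_{t=0}^{j} q^{3j-t} = q^{2j}\sum_{k=0}^{j} q^k \]
via the substitution $k = j - t$. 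Plugging this into the definition of $\hat{\bfN}_\lambda^3$ produces the claimed formula. The only subtle point is properly enforcing the dominance constraints on the intermediate weights $\nu_k(L)$; once these collapse to $l_{23} = l_{13} = l_{24} = 0$ and $l_{12} = l_{34}$, the remainder is an elementary summation.
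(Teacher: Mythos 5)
Your proof is correct. The paper does not write out a proof of this lemma (it says only that it "follows using the definition of admissibility" and leaves it to the reader), and your argument — solving the dominance constraints $\nu_k(L)\in X^+$ to force $l_{23}=l_{13}=l_{24}=0$ and $l_{12}=l_{34}$, then summing $q^{\deg L}$ over the resulting one-parameter families — is exactly the computation the paper has in mind.
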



\begin{lemma} \label{lemma c=0 minus alpha12}
Let $\lambda = a\varpi_1  +b\varpi_2+  d\varpi_4 \in X^+$. 
\begin{enumerate}
    \item \label{516a}
If  $a,b\geq 1$ then
\begin{equation}\label{eq lemma c=0 minus a12}
    \hat{\bfN}^3_{\lambda} -q\hat{\bfN}^3_{\lambda -\alpha_{12}} = \left\{ 
    \begin{array}{ll}
     \displaystyle \sum_{i=0}^d q^{2i} \bfN_{\lambda - i\alpha_{24}}^2         &\mbox{if } b\geq d; \\
       &  \\  
\displaystyle \sum_{i=0}^b q^{2i} \bfN_{\lambda - i\alpha_{24}}^2 + \sum_{j=1}^{\min(a+b,d-b)} q^{2b+3j}\bfN_{\lambda-b\alpha_{24}-j\alpha_{14}}^2 ,         & \mbox{if } b< d.
    \end{array}  
        \right. 
\end{equation}
\item \label{516b} If  $a=0$, $b\geq 2$ and $d\geq 1$ then $\hat{\bfN}^3_{\lambda} -q^2\hat{\bfN}^3_{\lambda -\alpha_{12}-\alpha_{23} } $  is equal to the right-hand side of \cref{eq lemma c=0 minus a12}.
\end{enumerate}
\end{lemma}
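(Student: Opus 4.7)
The plan is to expand both sides of each equation in the basis $\{\bfN^2_\mu\}$ and analyze the resulting coefficient on the left through admissible decompositions. Writing
\[ \hat{\bfN}^3_\lambda-q\,\hat{\bfN}^3_{\lambda-\alpha_{12}}=\sum_{\mu}\bigl(r_\lambda(\mu)-q\,r_{\lambda-\alpha_{12}}(\mu)\bigr)\bfN^2_\mu,\]
I would first construct a bijection between $\mathcal{L}^a_{\lambda-\alpha_{12}}(\mu)$ and the admissible elements $L\in\mathcal{L}^a_\lambda(\mu)$ with $l_{12}\geq 1$, by increasing (resp.\ decreasing) $l_{12}$ by one. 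Since all the $\nu_k(L)$ and the entries $l_{13},l_{24},l_{14}$ are unchanged by this operation, every admissibility condition in Definition \ref{defi admisible} is preserved, and $\deg$ increases by $\hgt(\alpha_{12})-1=1$. Consequently
\[ r_\lambda(\mu)-q\, r_{\lambda-\alpha_{12}}(\mu)=\sum_{\substack{L\in\mathcal{L}^a_\lambda(\mu)\\ l_{12}=0}} q^{\deg(L)}.\]

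Next I would classify all admissible $L\in\mathcal{L}^a_\lambda(\mu)$ with $l_{12}=0$, using $\lambda=a\varpi_1+b\varpi_2+d\varpi_4$. Expanding the positive roots in the fundamental weight basis gives $\nu_1^3=l_{12}-l_{23}-l_{34}$, so $\nu_1\in X^+$ forces $l_{23}=l_{34}=0$; similarly $\nu_2^3=-l_{13}$ forces $l_{13}=0$, leaving $L=l_{24}\alpha_{24}+l_{14}\alpha_{14}$. Then $\nu_2\in X^+$ yields $l_{24}\leq\min(b,d)$, the admissibility clause \emph{if $l_{14}\neq 0$ then $\nu_2^2=\nu_2^3=0$} forces $l_{24}=b$ whenever $l_{14}>0$, and $\nu_3\in X^+$ finally bounds $l_{14}\leq\min(a+b,d-b)$. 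Since $\deg(L)=2l_{24}+3l_{14}$, splitting on $l_{14}=0$ versus $l_{14}\geq 1$ reproduces exactly the formula in (\ref{516a}): when $b\geq d$ the second family is empty, whereas when $b<d$ both families contribute.

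For part (\ref{516b}) I would use the analogous bijection shifting both $l_{12}$ and $l_{23}$ by one, obtaining
\[r_\lambda(\mu)-q^2\,r_{\lambda-\alpha_{12}-\alpha_{23}}(\mu)=\sum_{\substack{L\in\mathcal{L}^a_\lambda(\mu)\\ l_{12}=0\ \text{or}\ l_{23}=0}} q^{\deg(L)}.\]
The hypothesis $a=0$ then eliminates the new case $l_{23}=0$ with $l_{12}\geq 1$: one computes $\nu_1^1=a-l_{12}=-l_{12}<0$, violating $\nu_1\in X^+$. So the sum collapses onto the $l_{12}=0$ subfamily already classified, and since the classification never really needed $a\geq 1$ (it only entered through the upper bound $\min(a+b,d-b)$), the right-hand side agrees with that of (\ref{516a}). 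The crux is the admissibility-preserving bijection; after that the work is bookkeeping in fundamental weight coordinates, and I do not anticipate any substantive obstacle beyond care in the case split.
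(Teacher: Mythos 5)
Your proposal is correct and, as far as one can tell, is the argument the authors had in mind: the paper explicitly omits the proof, saying it ``follows using the definition of admissibility,'' and your shift-the-exponent bijection plus the classification of admissible $L$ with $l_{12}=0$ is precisely that. I verified the fundamental-weight computations ($\nu_1^3=l_{12}-l_{23}-l_{34}$, $\nu_2^3=-l_{13}$, etc.), the resulting bounds $l_{24}\le\min(b,d)$ and $l_{14}\le\min(a+b,d-b)$ with $l_{24}=b$ forced when $l_{14}>0$, the degree formula $\deg(L)=2l_{24}+3l_{14}$, and the elimination of the case $l_{12}\ge 1$, $l_{23}=0$ under $a=0$ via $\nu_1^1=a-l_{12}+l_{23}<0$; all check out, and the distinct families land on distinct $\mu$ so each coefficient is indeed a single monomial. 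The only small things left implicit are that the hypotheses $a,b\ge 1$ (resp.\ $b\ge 2$) are exactly what makes $\lambda-\alpha_{12}$ (resp.\ $\lambda-\alpha_{12}-\alpha_{23}$) dominant, so that the left-hand side is well-defined, and that the degenerate case $b=d$ gives $\min(a+b,d-b)=0$ so the two branches of \cref{eq lemma c=0 minus a12} agree there.
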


\begin{prop} \label{propo N3 in terms of N2 with hat}
  Let  $\lambda = a\varpi_1 + b\varpi_2 + c\varpi_3 +d\varpi_4 \in X^+$. In type $\tilde{A}_4$ the non-generic decomposition for $\bfN^2_{\lambda}$ in terms of $\{\hat{\bfN}^3_{\mu} \mid  \mu \in X^+   \}$ is given in \Cref{tab:decom N2 in N3 for A4}  (replacing $\bfN^3_\mu$ with $\hat{\bfN}^3_\mu$).	
\end{prop}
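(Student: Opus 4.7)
The plan is to proceed by a case-by-case verification directly parallel to the proof of \Cref{Propo N2 in N3 without hat}, checking that each formula in \Cref{tab:decom N2 in N3 for A4} remains valid when every $\bfN_\mu^3$ on the right-hand side is replaced by $\hat{\bfN}_\mu^3$. The generic case, where $a\geq 1$, $b\geq 2$, $c\geq 2$, $d\geq 1$, is already handled by \cref{generic in type A4 hat}, so only the non-generic rows need to be treated individually.

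Before starting the enumeration, I would exploit the diagram symmetry $\varpi_i\leftrightarrow \varpi_{5-i}$ of $\tilde{A}_4$. This involution preserves $\Pgeq{2}$ and exchanges the two admissibility constraints ``$l_{13}\neq 0 \Rightarrow \nu_1^2=0$'' and ``$l_{24}\neq 0 \Rightarrow \nu_1^3=0$'' in \Cref{defi admisible}, hence sends $\hat{\bfN}_\mu^3$ to $\hat{\bfN}_{\sigma(\mu)}^3$ where $\sigma$ is the induced map on $X^+$. Consequently, the 14 rows in the bottom block of the table can be deduced from the corresponding rows in the top block, essentially halving the verification.

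For each remaining row, the strategy is to expand the conjectured right-hand side by substituting \cref{defi N3 hat in type A4}, and to check that the resulting combination of $\{\bfN_\nu^2\}$ collapses to $\bfN_\lambda^2$. For most of the short rows (such as \ref{0110}, \ref{0200}, \ref{0121}, \ref{0111}, \ref{0100}, \ref{0000}), admissibility cuts down $\mcL_\mu^a(\nu)$ to at most a handful of lattice points and the identity follows by direct inspection, using only the already-computed formulas of \Cref{Propo N2 in N3 without hat} and the fact that the two-variable generating series on the RHS is finite.

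The main obstacle lies in the rows where $\lambda$ has several vanishing components, namely Rows \ref{1001}, \ref{2002}, \ref{1001bis} (with $b=c=0$) and Rows \ref{1201}, \ref{1101}, \ref{1102} (with $c=0$), along with their symmetric counterparts. In these cases admissibility forces the individual $\hat{\bfN}_\mu^3$ to be infinite geometric series in $q$, which must nevertheless cancel to leave the finite-support expression predicted by the table. This is precisely the content of the two preparatory lemmas: \Cref{lemma N3 en N2 two zeros in the middle} evaluates $\hat{\bfN}_\lambda^3$ explicitly when $\lambda=a\varpi_1+d\varpi_4$, and \Cref{lemma c=0 minus alpha12} provides a clean formula for $\hat{\bfN}^3_\lambda-q\hat{\bfN}^3_{\lambda-\alpha_{12}}$ (and for the twisted variant with $\alpha_{12}+\alpha_{23}$) when $c=0$. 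Plugging these two identities into the conjectured RHS of the delicate rows makes the geometric tails cancel exactly against each other, leaving a finite expression that matches $\bfN_\lambda^2$ thanks to \Cref{Propo N2 in N3 without hat}. The verification is combinatorially intricate but, in each case, amounts to matching the $q$-graded cardinalities of explicit families of admissible tuples against the corresponding $\bfM$-computations already performed in the proof of \Cref{Propo N2 in N3 without hat}.
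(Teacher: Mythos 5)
Your proposal is correct and follows essentially the same route as the paper's proof: reduce to the first block of rows by the diagram involution, handle the rows whose $\hat{\bfN}^3$-expansion involves only a bounded number of terms by direct inspection or by the argument of \Cref{Propo generic case A4}, and attack the delicate rows (where $b=c=0$ or $c=0$) with \Cref{lemma N3 en N2 two zeros in the middle} and \Cref{lemma c=0 minus alpha12}, which are indeed the two lemmas the paper prepares for exactly this purpose. One small inaccuracy worth flagging: the expansions in \Cref{lemma N3 en N2 two zeros in the middle,lemma c=0 minus alpha12} are \emph{finite} sums (of length controlled by $\min(a,d)$, $\min(a+b,d-b)$, etc.), not infinite geometric series; the relevant phenomenon is that these sums are unbounded in $\lambda$ and telescope when substituted into the right-hand side, and your row accounting also omits Rows \ref{0011}, \ref{1121}, \ref{1111} (which go through the generic argument) and misclassifies Rows \ref{0201}, \ref{0102} (they also need \Cref{lemma c=0 minus alpha12} and \Cref{lemma N3 en N2 two zeros in the middle} respectively), but none of this changes the viability of the strategy.
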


\begin{proof}
As in the proof of \Cref{Propo N2 in N3 without hat} we only need to consider the first 17 rows of \Cref{tab:decom N2 in N3 for A4}. We first notice that the argument given in the proof of \Cref{Propo generic case A4} carries over for the decomposition in Rows \ref{0110}, \ref{0200}, \ref{0121}, \ref{0111}, \ref{0011}, \ref{1121} and \ref{1111}. 
On the other hand, Rows \ref{0101} and \ref{0100}  provide a specific value for $\lambda$ and therefore these cases follow by a direct computation. Furthermore, \Cref{lemma N3 en N2 two zeros in the middle} gives us the result for Rows \ref{0000}, \ref{1001} and \ref{2002}. This leaves us with five cases to be checked. 

\begin{enumerate}[label=\textbf{R\arabic{enumi}.},ref=1.\arabic{enumi}, wide=0pt]
    \item[\textbf{R\ref{0201}.}] Using  \Cref{lemma c=0 minus alpha12}\eqref{516b} for $\lambda$ and \Cref{lemma c=0 minus alpha12}\eqref{516a} for $\lambda -\alpha_{24}$ we obtain, as required: 
    \begin{equation*}
       \bfN_{\lambda}^2 = \left(  \hat{\bfN}_{\lambda}^3 -q\hat{\bfN}_{\lambda - \alpha_{12}-\alpha_{23} }^3  \right) -q^2\left( \hat{\bfN}_{\lambda -\alpha_{24}}^3 -q\hat{\bfN}_{\lambda -\alpha_{24} - \alpha_{12}}^3   \right).
    \end{equation*}
    \item[\textbf{R\ref{0102}}.]  Using the definition of $\hat{\bfN}_{\lambda}^3$ and noticing which $\mu$ are such that $\mathcal{L}^a_{\lambda}(\mu)\neq \emptyset,$  one obtains
    \begin{equation} \label{eq row 9 hat A}
        \hat{\bfN}_{\lambda}^3 = \bfN_{\lambda}^2 +q^2\bfN_{\lambda - \alpha_{24}}^2 +q^5\bfN_{\lambda-\alpha_{24}-\alpha_{14}}^2.
    \end{equation}
    On the other hand, using \Cref{lemma N3 en N2 two zeros in the middle} twice, we obtain
    \begin{equation}\label{eq row9 hat B}
      \hat{\bfN}_{\lambda - \alpha_{24}}^3 -q^2\hat{\bfN}_{\lambda - \alpha_{24}-\alpha_{14}}^3 = \bfN_{\lambda -\alpha_{24}}^2 +q^3\bfN_{{\lambda -\alpha_{24}-\alpha_{14}} }^2.   
    \end{equation}
    Combining \cref{eq row 9 hat A} with \cref{eq row9 hat B} we get the desired decomposition.
    \item[\textbf{R\ref{1201}.}]  Using \Cref{lemma c=0 minus alpha12}\eqref{516a} for $\lambda$ and $\lambda -\alpha_{24}$ we obtain the required decomposition
    \begin{equation*}
     \bfN_{\lambda}^2  = \left(  \hat{\bfN}_{\lambda}^3 -q\hat{\bfN}_{\lambda - \alpha_{12}}^3  \right) -q^2\left( \hat{\bfN}_{\lambda -\alpha_{24}}^3 -q\hat{\bfN}_{\lambda -\alpha_{24} - \alpha_{12}}^3   \right).
    \end{equation*}
    \item[\textbf{R\ref{1101}.}] By \Cref{lemma c=0 minus alpha12}\eqref{516a} we have $
        \hat{\bfN}_{\lambda}^3 -q\hat{\bfN}_{\lambda -\alpha_{12}}^3 = \bfN_{\lambda}^2 +q^2\bfN_{\lambda - \alpha_{24}}^2$. 
    Since $\lambda -\alpha_{24} = (a+1)\varpi_1 $, \Cref{lemma N3 en N2 two zeros in the middle} implies $\hat{\bfN}_{\lambda - \alpha_{24}}^3 = \bfN_{\lambda - \alpha_{24}}^2 $. We conclude that
$ \bfN_{\lambda}^2 = \hat{\bfN}_{\lambda}^3 -q\hat{\bfN}_{\lambda -\alpha_{12}}^3 -  q^2\hat{\bfN}_{\lambda - \alpha_{24}}^3 $, 
as we wanted to show.  
    \item[\textbf{R\ref{1102}.}]  Using \Cref{lemma c=0 minus alpha12}\eqref{516a} we obtain
    \begin{equation} \label{eq row 23 hat A}
        \hat{\bfN}_{\lambda}^3 -q\hat{\bfN}_{\lambda -\alpha_{12}}^3  = \bfN_{\lambda}^2+q^2\displaystyle \sum_{j=0}^{\min(a+1,d-1)} q^{3j} \bfN_{\lambda -\alpha_{24}- j\alpha_{14}}^2 .
    \end{equation}
    On the other hand, \Cref{lemma N3 en N2 two zeros in the middle} implies
    \begin{equation}\label{eq row 23 hat B}
        \hat{\bfN}_{\lambda -\alpha_{24}}^3 -q^2\hat{\bfN}_{\lambda - \alpha_{24}-\alpha_{14}}^3 =   \displaystyle \sum_{j=0}^{\min(a+1,d-1)} q^{3j} \bfN_{\lambda -\alpha_{24}- j\alpha_{14}}^2 .
    \end{equation}
   By combining \cref{eq row 23 hat A} with \cref{eq row 23 hat B} we obtain the desired decomposition. 
\end{enumerate}
This finishes the proof of the Proposition.
\end{proof}

\begin{proof}[Proof of \Cref{teo decomposition in A4}]
The result follows by combining  \Cref{Propo generic case A4}, \Cref{Propo N2 in N3 without hat} and \Cref{propo N3 in terms of N2 with hat}.
\end{proof}

{\footnotesize
\bibliography{mybiblio} 
\bibliographystyle{alpha}
}

\newpage

\end{document}